\DeclareMathOperator*{\argmin}{argmin}
\DeclareMathAlphabet\mbc{OMS}{cmsy}{b}{n}
\renewcommand{\arraystretch}{1.2}
\setlist[enumerate]{label={\rm(\roman*)},topsep=3pt}%,itemsep=3pt,topsep=3pt}
\newcommand{\ri}{\operatorname{ri}}
\newcommand{\rec}{\operatorname{rec}}
\newcommand{\cl}{\operatorname{cl}}
\newcommand{\inte}{\operatorname{int}}
\newcommand{\gra}{\operatorname{gra}}
\newcommand{\zer}{\operatorname{zer}}
\newcommand{\Fix}{\operatorname{Fix}}
\newcommand{\ran}{\operatorname{ran}}
\newcommand{\dom}{\operatorname{dom}}
\newcommand{\Id}{\operatorname{Id}}
\newcommand{\prox}{\operatorname{prox}}
\newcommand{\subd}{\widehat{\partial}}
\newcommand{\sign}{\operatorname{sign}}
\newcommand{\Tdr}{T_{\operatorname{aDR}}}
\newcommand{\R}{\mathbb{R}}
\def\tto{\rightrightarrows}
\newcommand{\seq}[2]{{(#1)}_{#2=0}^{\infty}}
\pretocmd{\theequation}{\small}{}{}
\def\th@plain{%
	\thm@notefont{}% same as heading font
	\itshape % body font
}
\def\th@definition{%
	\thm@notefont{}% same as heading font
	\normalfont % body font
}
\newtheorem{theorem}{Theorem}[section]
\newtheorem{proposition}[theorem]{Proposition}
\newtheorem{corollary}[theorem]{Corollary}
\newtheorem{lemma}[theorem]{Lemma}
\newtheorem{fact}[theorem]{Fact}
\theoremstyle{definition}
\newtheorem{definition}[theorem]{Definition}
\newtheorem{remark}[theorem]{Remark}
\newtheorem{assumption}[theorem]{Assumption}
\Crefname{fact}{Fact}{Facts}
\Crefname{assumption}{Assumption}{Assumptions}
\Crefname{enumi}{}{}
\title{An Adaptive Alternating Direction Method of Multipliers}
\author{Sedi Bartz \thanks{Department of Mathematical Sciences, Kennedy College of Sciences, University of Massachusetts Lowell, \textsc{MA}, USA. E-mail:~\href{mailto:sedi_bartz@uml.edu}{sedi\_bartz@uml.edu}, \href{mailto:hung_phan@uml.edu}{hung\_phan@uml.edu}} \and Rub\'en Campoy \thanks{Department of Statistics and Operational Research, Universitat de Val\`{e}ncia, Valencia, Spain. E-mail:~\href{mailto:ruben.campoy@uv.es}{ruben.campoy@uv.es}} \and Hung Phan $^*$}
\begin{document}

\maketitle

\begin{abstract}
The alternating direction method of multipliers (ADMM) is a powerful splitting algorithm for linearly constrained convex optimization problems. In view of its popularity and applicability, a growing attention is drawn towards the ADMM in nonconvex settings. Recent studies of minimization problems for noncovex functions include various combinations  of assumptions on the objective function including, in particular, a Lipschitz gradient assumption. We consider the case where the objective is the sum of a strongly convex function and a weakly convex function. To this end we present and study an adaptive version of the ADMM which incorporates generalized notions of convexity and penalty parameters adapted to the convexity constants of the functions. We prove convergence of the scheme under natural assumptions. To this end we employ the recent adaptive Douglas--Rachford algorithm by revisiting the well known duality relation between the classical ADMM and the Douglas--Rachford splitting algorithm, generalizing this connection to our setting. 
We illustrate our approach by relating and comparing to alternatives, and by numerical experiments on a signal denoising problem.

\paragraph{\small Keywords} Alternating direction method of multipliers $\cdot$ Douglas--Rachford algorithm $\cdot$ Weakly convex function $\cdot$ Comonotonicity $\cdot$ Signal denoising $\cdot$ Firm thresholding
\paragraph{\small MSC\,2020:} 47H05 $\cdot$ 47N10 $\cdot$ 47J25 $\cdot$ 49M27 $\cdot$ 65K15
\end{abstract}

\section{Introduction}
By now, the \emph{alternating direction method of multipliers (ADMM)} is a well-studied and applied splitting algorithm. In particular, it is applied to the problem 
\begin{equation}\label{eq:P}\tag{$\mathcal{P}$}
\begin{array}{rl}
\min & f(x)+g(z)\\[1ex]
\text{s.t.} & Mx=z,\\
& {x\in\R^n,\,z\in\R^m};
\end{array}
\end{equation}
where {$f:\R^n\to{]-\infty,+\infty]}$} and {$g:\R^m\to{]-\infty,+\infty]}$} are proper, lower semicontinuous and convex functions, and $M\in\R^{m\times n}$. The ADMM can be traced back to 1975 in the studies of Glowinski and Marroco \cite{GM75}, and of Gabay and Mercier \cite{GM76}. It was revisited in the early 1980s in~\cite{FG83,Gab83}. The ADMM has been successfully applied to a wide range of statistical and  learning problems such as sparse regression, signal and image processing, and support vector machines, to name a few. An extensive survey on the ADMM and its applications can be found in \cite{BPCPE11}.

The ADMM can be viewed as an enhanced version of the \emph{method of multipliers} in the case where the objective function is separable. The \emph{augmented Lagrangian} associated with \eqref{eq:P}  is the function
\begin{equation}\label{e:lagrangian}
L_{\gamma}(x,z,y)=f(x)+g(z)+\langle y, Mx-z\rangle+\frac{\gamma}{2}\|Mx-z\|^2,
\end{equation}
where $\gamma\geq 0$ is the \emph{penalty parameter} and $y\in\R^m$ is the \emph{Lagrange multiplier}. By employing the method of multipliers, one solves \eqref{eq:P} by iteratively minimizing $L_{\gamma}(x,z,y)$ over the (primal) variables $x$ and $z$ while updating the Lagrange multiplier $y$ (the dual variable). However, this requires to minimize the Lagrangian jointly in $x$ and $z$. In order to avoid this situation, the ADMM takes advantage of the separability of the objective function and splits the minimization procedure into two separate steps, one for each variable. Specifically, by fixing a positive penalty parameter $\gamma$, the iterative step  of the  ADDM  for solving \eqref{eq:P} is
\begin{subequations}\label{alg0}
\begin{align}
x^{k+1}&=\argmin_{x\in \R^n}
L_\gamma(x,z^k,y^k),
%=\argmin_{x\in \R^n}\left\{f(x)+g(z^k)+\langle y^k, Mx-z^k\rangle + \frac{\gamma}{2}\|Mx-z^k\|^2\right\},
\label{alg0:x}\\
z^{k+1}&=
\argmin_{z\in \R^m}
L_\gamma(x^{k+1},z,y^k),
%=\argmin_{z\in \R^m}\left\{f(x^{k+1})+g(z)+\langle y^k, Mx^{k+1}-z\rangle + \frac{\gamma}{2}\|Mx^{k+1}-z\|^2\right\},
\label{alg0:z}\\
y^{k+1}&=y^k+\gamma(Mx^{k+1}-z^{k+1}).
\end{align}
\end{subequations}
Convergence of this scheme is well established in the case where $f$ and $g$ are convex, see, e.g., \cite[\S~3.2]{BPCPE11}. In nonconvex cases it has been studied, e.g., \cite{HLR16,LP15,WCX18,WYZ19}, under various combinations of assumptions which include, in particular, a Lipschitz continuity assumption on the gradient of $f$ and/or $g$.

In the present study we consider the case where $f$ is strongly convex and $g$ is weakly convex such that the objective of~\eqref{eq:P} is convex on the constraint. We introduce an \emph{adaptive alternating direction method of multipliers (aADMM)} for which we incorporate a flexible range of penalty parameters adapted to the convexity constants of the functions $f$ and $g$.   To this end we revisit the well-known relation between the classical ADMM and the \emph{Douglas--Rachford} (DR) splitting algorithm \cite{DR56,LM79}. This duality relation was first observed in~\cite[\S~5.1]{Gab83} and later revisited by other authors, see, e.g.,~\cite[Appendix~A]{ACL16} or \cite[Remark~3.14]{BK15}. A more detailed discussion regarding the ADMM is available in \cite{EY15} while \cite{MZ19} is a recent survey on equivalences and other relations between splitting algorithms. We provide an analogous relation between our aADMM and the recent \emph{adaptive Douglas--Rachford (aDR)} algorithm \cite{BDP19,DPadapt}. We then employ this relation in order to derive convergence of our aADMM from the convergence of the aDR.

We point out (see Remark~\ref{rem:reformulation}) that in our strongly-weakly convex setting, the functions $f$ and $g$ in problem~\eqref{eq:P} can be augmented into convex functions which transform the problem into a convex one, admissible for the classical ADMM, with the same minimizers, optimal values and computational difficulty level. However, the ADMM for the augmented problem corresponds to a {Douglas--Rachford} algorithm which is not in direct duality relations with the original strongly-weakly problem. Consequently, this theoretical aspect is lacking. Instead, we preserve and analyze the original problem. Our approach does yield a natural duality relation with a corresponding aDR algorithm which is instrumental in our convergence analysis. An additional benefit of our approach is that we relax and improve previously imposed assumptions on the strongly-weakly convex scenario such as in \cite{MSMC15,wADMM} (see Remark~\ref{rm:wsc1} and Remark~\ref{rm:wsc2}). Finally, although augmentation is a viable option in the strongly-weakly convex setting, application of the adaptive algorithms to the original problem has its own merit and by now was studied in a number of recent publications such as \cite{BCP20,BDP19,DPadapt,DP3op,GW19,GHY17,LR21,wADMM,ZHF20}, to name a few.

Our main result is \Cref{t:cvg_aadmm}, where we provide convergence of the aADMM. Moreover, in order to show how our framework generalizes and relaxes the framework of the classical convex ADMM, we incorporate in our convergence analysis the relaxed assumptions regarding the convexity of $f$ and $g$ while not imposing further on top of the traditional constraint qualifications of the ADMM. To this end we revisit and incorporate in our analysis some of the most commonly imposed assumptions and conditions on the classical ADMM. For the sake of accessibility and convenience, we summarize and unify our analysis with these classical conditions into an integrated tool in \Cref{cor:aADMM} for the aADMM and, in particular, in \Cref{cor:ADMM} for the classical ADMM.

 Finally, we illustrate computational aspects of both approaches (our aADMM and the classical ADMM on an equivalent modified problem) by numerical experiments on a signal denoising problem with a weakly convex regularization term.

The remainder of the paper is organized as follows. In \Cref{sec:prelim} we recall basic definitions and preliminary results. In \Cref{sec:aDR} we recall notions of generalized monotonicity and the convergence of the adaptive Douglas--Rachford algorithm. In \Cref{sec:aADMM} we introduce our adaptive ADMM, we analyze some of its basic properties and we discuss conditions and constraint qualifications. The convergence of the scheme is established in \Cref{sec:conv}. In \Cref{sec:exp} we conduct numerical experiments on a signal denoising problem with a weakly convex regularization term. Finally, we conclude our discussion in \Cref{sec:con}.

\section{Preliminaries}\label{sec:prelim}
Throughout, $\langle \cdot,\cdot\rangle$ denotes the inner product in $\R^n$ with induced norm $\|\cdot\|$ defined by $\|x\|=\sqrt{\langle x,x\rangle},\ x\in\R^n$. We set $\R_+:=\{r\in\R: r\geq 0\}$ and $\R_{++}:=\{r\in\R: r>0\}$. Let $M\in\R^{m\times n}$. Then $\ran M$, $\ker M$ and $\|M\|$ denote, respectively, the \emph{range}, the \emph{null space} and the matrix $2$-norm of $M$. Let $C\subseteq R^n$ be a set. The \emph{closure}, the \emph{interior} and the \emph{relative interior} of $C$ are denoted by $\cl C$, $\inte C$ and $\ri C$, respectively. We denote by $A:\R^n\tto\R^n$ a \emph{set-valued operator} that maps any point $x\in\R^n$ to a set $A(x)\subseteq\R^n$. In the case where $A$ is single-valued, we write  $A:\R^n\to\R^n$. The \emph{graph}, the \emph{domain}, the \emph{range}, the set of \emph{fixed points} and the set of \emph{zeros} of A, are denoted, respectively, by $\gra A$, $\dom A$, $\ran A$, $\Fix A$ and $\zer A$, i.e.,
\begin{gather*}
\gra A:=\left\{(x,u)\in\R^n\times\R^n : u\in A(x)\right\},\quad \dom A:=\left\{x\in\R^n : A(x)\neq\emptyset\right\},\\
\ran A:=\left\{x\in\R^n : x\in A(z) \text{ for some } z\in\R^n  \right\},\\
\Fix A:=\left\{x\in\R^n : x\in A(x)\right\}\quad
\text{and} \quad \zer A:=\left\{x\in\R^n : 0\in A(x)\right\}.
\end{gather*}
The \emph{inverse} of $A$, denoted by $A^{-1}$, is the operator defined via its graph by $\gra A^{-1}:=\{(u,x)\in \R^n \times \R^n: u\in A(x)\}$. We denote the \emph{identity} mapping by $\Id$. 
The \emph{resolvent} of the operator $A:\R^n\tto\R^n$ with parameter $\gamma>0$ is the operator $J_{\gamma A}$ defined by
\begin{equation*}
J_{\gamma A}:=(\Id+\gamma A)^{-1}.
\end{equation*}
The \emph{$\lambda$-relaxed resolvent} of $A$ with parameter $\gamma>0$ is the operator $J^{\lambda}_{\gamma A}$ defined by
%\begin{equation*}
$$J^{\lambda}_{\gamma A}:=(1-\lambda)\Id + \lambda J_{\gamma A}.$$%

\begin{definition}
Let $D\subseteq\R^n$ be a nonempty set. The mapping $T:D\to\R^n$ is said to be
\begin{enumerate}
\item \emph{Lipschitz continuous} with \emph{Lipschitz constant} $l>0$ if
\begin{equation*}
\|T(x)-T(y)\|\leq l\|x-y\|, \quad \forall x,y\in D;
\end{equation*}
\item \emph{nonexpansive} if it is Lipschitz continuous with constant $l=1$;%, i.e.,
%\begin{equation*}
%\|T(x)-T(y)\|\leq \|x-y\|, \quad \forall x,y\in D;
%\end{equation*}
\item \emph{conically $\theta$-averaged}, where $\theta>0$, if there exists a nonexpansive mapping $R:D\to\R^n$ such that
\begin{equation*}
T=(1-\theta)I+\theta R.
\end{equation*}
\end{enumerate}
\end{definition}
Conically $\theta$-averaged mappings were studied in~\cite{BMW20}, in which they were referred to as \emph{conically nonexpansive mappings}. They can be viewed as a natural extension of the classical $\theta$-averaged mappings (see, e.g.,~\cite[Definition~4.33]{BC17}). Additional properties and detailed discussions can be found in~\cite{BDP19,GW19}.

An extended real-valued function $f:\R^n\to{]-\infty,+\infty]}$ is said to be \emph{proper} if its (effective) domain, the set $\dom f:=\{x\in\R^n : f(x)<+\infty\}$, is nonempty. We say that $f$ is \emph{lower semicontinuous (l.s.c.)} if, at any $\bar{x}\in\R^n$,
$$f(\bar x)\leq \liminf_{x\to\bar x} f(x).$$
Let $\alpha\in\R$. We say that $f$ is $\alpha$-convex if $f-\frac{\alpha}{2}\|\cdot\|^2$ is convex, equivalently, if
\begin{equation*}
f((1-\lambda)x+\lambda y)\leq \lambda f(x)+(1-\lambda)f(y)-\frac{\alpha}{2}\lambda(1-\lambda)\|x-y\|^2, \quad\forall x,y\in\R^n,\ \lambda\in[0,1].
\end{equation*}
In particular, $f$ is convex if and only if $f$ is $0$-convex. For an $\alpha$-convex function $f$, we say that $f$ is \emph{strongly convex} if $\alpha>0$ and we say that $f$ is \emph{weakly convex} (or \emph{hypoconvex}) if $\alpha<0$. It follows that if $f_1$ is $\alpha_1$-convex and $f_2$ is  $\alpha_2$-convex, then $f_1+f_2$ is $(\alpha_1+\alpha_2)$-convex. 

The function $f$ is {\em coercive} if
\begin{equation*}
\lim_{\|x\|\to\infty}f(x)=+\infty
\end{equation*}
and {\em supercoercive} if
\begin{equation*}
\lim_{\|x\|\to\infty}\frac{f(x)}{\|x\|}=+\infty.
\end{equation*}
One can verify that (see, e.g., \cite[Corollary~11.17]{BC17})
\begin{equation*}
\text{strong convexity }\implies
\text{ supercoercivity }\implies
\text{coercivity}.
\end{equation*}

Let $\gamma>0$. The \emph{proximal operator} with parameter $\gamma$ associated with the function $f$ is defined by
\begin{equation*}
\prox_{\gamma f}: \R^n\tto\R^n:y\mapsto\argmin_{x\in\R^n}\left\{ f(x)+\frac{1}{2\gamma}\|x-y\|^2\right\}, \quad \forall y\in\R^n.
\end{equation*}
Let $x\in\dom f$. The \emph{(convex) subdifferential} of $f$ at $x$ is the set 
\begin{equation*}
\partial f(x):=\{u\in\R^n : \langle y-x,u\rangle +f(x)\leq f(y),\quad \forall y\in\R^n\}.
\end{equation*}
The \emph{Fr\'echet subdifferential} of $f$ at $x$ is the set 
\begin{equation*}
\subd f(x):=\left\{u\in\R^n : \liminf_{\substack{y\to x\\ y\neq x}} \frac{f(y)-f(x)-\langle u,y-x\rangle}{\|y-x\|}\geq 0\right\}.
\end{equation*}
When $f$ is differentiable at $x$, we denote its gradient at $x$ by $\nabla f(x)$. We recall the following facts regarding subdifferentials and gradients.

\begin{fact}\label{f:subdif}
Let $f,g:\R^n\to{]-\infty,+\infty]}$ be proper and let $M\in\R^{n\times m}$.  
\begin{enumerate}
\item $\partial f(x)\subseteq \subd f(x)$ for all $x\in\dom f$.\label{f:subdif_inclusion}
\item If $f$ is convex, then $\subd f(x)=\partial f(x)$ for all $x\in\dom f$.\label{f:subdif_convex}
\item If $f$ is differentiable at $x\in\dom f$, then $\subd f(x)=\{\nabla f(x)\}$.\label{f:subdif_dif}
\item $0\in\partial f(\bar{x})$ if and only if $\bar{x}\in\dom f$ minimizes $f$ over $\R^n$.\label{f:subdif_minc}
\item $0\in \subd f(\bar{x})$ if $f$ attains a local minimum at $\bar{x}\in\dom f$.\label{f:subdif_min}
\item If $g$ is differentiable at $x\in\dom f\cap \dom g$, then $\subd(f+g)(x)=\subd f(x)+\nabla g(x)$.\label{f:subdif_sumdif}
%$$\partial(f+g)(x)=\partial f(x)+\nabla g(x)\quad\text{and}\quad.$$
\item\label{f:subdif_sum}
If $f$ and $g$ are lower semicontinuous, then 
$$\subd f(x)+\subd g(x)\subseteq \subd(f+g)(x) \ \ \text{for all}\ x\in\dom f\cap\dom g.$$
\item\label{f:subdif_chain}
If $f$ is lower semicontinuous, then 
$$M^T\subd f(Mx)\subseteq \subd(f\circ M)(x)\ \ \text{for all}\ x\in\R^m\ \text{such that}\ Mx\in\dom f.$$
\end{enumerate}
\end{fact}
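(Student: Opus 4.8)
The statement collects eight standard properties of the convex and Fréchet subdifferentials, and the plan is to derive essentially all of them directly from the two defining inequalities together with the monotonicity and superadditivity of $\liminf$. I would first separate the items into those that are immediate from the definitions, namely (i), (iv), (v) and (vii), and those that require a short limiting construction, namely (ii), (iii), (vi) and (viii).

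For the immediate ones: in (i), if $u\in\partial f(x)$ then the convex subgradient inequality says exactly that the numerator $f(y)-f(x)-\langle u,y-x\rangle$ is nonnegative for every $y$, so each difference quotient in the definition of $\subd f(x)$ is nonnegative and its $\liminf$ is $\ge 0$. Item (iv) is immediate: $0\in\partial f(\bar x)$ unwinds to $f(\bar x)\le f(y)$ for all $y\in\R^n$, which is global minimality. For (v), local minimality makes $f(y)-f(\bar x)\ge 0$ for all $y$ near $\bar x$, so the difference quotient with $u=0$ is nonnegative near $\bar x$ and the $\liminf$ is $\ge 0$. For (vii), I would write the difference quotient of $f+g$ at $x$ as the sum of the difference quotients of $f$ and of $g$ against $u$ and $v$ respectively; since $\liminf(a_k+b_k)\ge\liminf a_k+\liminf b_k$, each summand having $\liminf\ge 0$ yields $u+v\in\subd(f+g)(x)$ (here the lower semicontinuity hypotheses are only the standing assumptions under which one works, and are not used in the inclusion itself).

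For the constructions: in (ii), the inclusion $\partial f(x)\subseteq\subd f(x)$ is (i), so I would prove the reverse for convex $f$. Given $u\in\subd f(x)$ and a fixed $y$, I would restrict attention to the segment $y_\lambda:=x+\lambda(y-x)$ as $\lambda\downarrow 0$; convexity gives $f(y_\lambda)-f(x)\le\lambda\bigl(f(y)-f(x)\bigr)$, so the difference quotient along this direction is bounded above by the constant $\tfrac{f(y)-f(x)-\langle u,y-x\rangle}{\|y-x\|}$, and since its $\liminf$ is $\ge 0$ this constant must be nonnegative, which is precisely the subgradient inequality. For (iii), differentiability gives $f(y)-f(x)-\langle\nabla f(x),y-x\rangle=o(\|y-x\|)$, so $\nabla f(x)\in\subd f(x)$; conversely, for any $u\in\subd f(x)$ I would test the difference quotient along $y=x-t(\nabla f(x)-u)$, $t\downarrow 0$, which forces the $\liminf$ to equal $-\|\nabla f(x)-u\|\ge 0$ and hence $u=\nabla f(x)$. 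Item (vi) is the smooth sum rule: splitting the difference quotient of $f+g$ into the $f$-part (with $\liminf\ge 0$) and the $g$-part (which tends to $0$ by differentiability) proves $\subd f(x)+\nabla g(x)\subseteq\subd(f+g)(x)$, and applying this same inclusion to the pair $f+g$ and $-g$ gives the reverse inclusion, hence equality.

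The main obstacle is the chain rule (viii). For $u\in\subd f(Mx)$ and $y\to x$ I would use $\langle M^Tu,y-x\rangle=\langle u,My-Mx\rangle$ to rewrite the difference quotient of $f\circ M$ at $x$ as the difference quotient of $f$ at $Mx$ evaluated at $z=My$, multiplied by the factor $\tfrac{\|My-Mx\|}{\|y-x\|}=\tfrac{\|M(y-x)\|}{\|y-x\|}\in[0,\|M\|]$. Two points need care: $M$ may be non-injective, so I would treat $My=Mx$ separately (the numerator then vanishes), and $y\to x$ need not realize every approach to $Mx$, which is harmless because the Fréchet condition at $Mx$ controls all approaches. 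The quantitative step is an $\varepsilon$--$\delta$ estimate: given $\varepsilon>0$, smallness of $\|y-x\|$ forces smallness of $\|My-Mx\|$, so the $f$-quotient at $Mx$ is $\ge-\varepsilon$; multiplying by the nonnegative, $\|M\|$-bounded factor keeps the product $\ge-\varepsilon\|M\|$, whence the $\liminf$ is $\ge 0$ and $M^Tu\in\subd(f\circ M)(x)$.
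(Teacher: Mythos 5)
Your proposal is correct, but it takes a genuinely different route from the paper: the paper does not prove any of the eight items itself, it simply cites standard references (Rockafellar--Wets for \Cref{f:subdif_inclusion}, \Cref{f:subdif_convex}, \Cref{f:subdif_sum}, \Cref{f:subdif_chain}; Kruger for \Cref{f:subdif_dif}, \Cref{f:subdif_min}, \Cref{f:subdif_sumdif}; Bauschke--Combettes for \Cref{f:subdif_minc}), whereas you reconstruct everything from the two defining inequalities. Your arguments all go through: the segment argument for \Cref{f:subdif_convex} (bounding the difference quotient along $x+\lambda(y-x)$ by the constant subgradient quotient), the directional test $y=x-t(\nabla f(x)-u)$ forcing $u=\nabla f(x)$ in \Cref{f:subdif_dif}, the elegant self-application of the one-sided smooth sum rule to the pair $(f+g,-g)$ to get equality in \Cref{f:subdif_sumdif}, and the $\varepsilon$--$\delta$ estimate with the bounded factor $\|M(y-x)\|/\|y-x\|\in[0,\|M\|]$ (treating $My=Mx$ separately) for the chain rule \Cref{f:subdif_chain} are all sound; your observation that lower semicontinuity is not actually needed for the inclusions in \Cref{f:subdif_sum} and \Cref{f:subdif_chain} is also accurate, as those hypotheses are inherited from the cited formulations rather than forced by the arguments. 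What the paper's approach buys is brevity and delegation to authoritative sources, which is the usual convention for a ``Fact'' environment; what your approach buys is a self-contained, verifiable treatment that makes clear exactly which hypotheses each item uses, at the cost of roughly a page of routine estimates.
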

\begin{proof}
\Cref{f:subdif_inclusion}: See, e.g.,~\cite[Proposition 8.6]{RW}.
\Cref{f:subdif_convex}: See, e.g.,~\cite[Proposition 8.12]{RW}.
\Cref{f:subdif_dif}: See, e.g.,~\cite[Proposition 1.1]{K03}.
\Cref{f:subdif_minc}: See, e.g.,~\cite[Theorem~16.3]{BC17}.
\Cref{f:subdif_min}: See, e.g.,~\cite[Proposition 1.10]{K03}.
\Cref{f:subdif_sumdif}: See, e.g.,~\cite[Corollary 1.12.2]{K03}.
\Cref{f:subdif_sum}: See, e.g.~\cite[Corollary 10.9]{RW}.
\Cref{f:subdif_chain}: See, e.g,~\cite[Theorem~10.6]{RW}.%
\end{proof}

In order to carry out our analysis we adopt the Fr\'echet subdifferential. However, our approach is applicable if one adopts other notions of the subdifferentials, such as the \emph{Mordukhovich} subdifferential or the \emph{Clarke-Rockafellar} subdifferential since these notions coincide for $\alpha$-convex functions (see~\cite[Proposition~6.3]{BMW20}).

Finally, let $f,g:\R^n\to {]}-\infty,+\infty]$ be proper and convex. We recall that the \emph{recession function} of $f$ is defined by
\begin{equation*}
\rec f:\R^n\to {]}-\infty,+\infty]:
y \mapsto \sup_{x\in\dom f}\{f(x+y) - f(x)\},
\end{equation*}
the \emph{Fenchel conjugate} of $f$ is defined by
\begin{equation*}
f^*:\R^n\to {]}-\infty,+\infty]:
u \mapsto \sup_{x\in\R^n}\{\langle u,x\rangle - f(x)\},
\end{equation*}
and the infimal convolution of $f$ and $g$ is defined by
\begin{equation*}
f\square g:\R^n\to {[}-\infty,+\infty]:
x\mapsto \inf_{y\in\R^n}\{f(y)+g(x-y)\}.
\end{equation*}

\section{Generalized Monotonicity and the Adaptive Douglas-Rachford Algorithm}\label{sec:aDR}
We recall the following notions of generalized monotonicity.
\begin{definition}\label{def:monotone}
Let $A:\R^n\tto\R^n$ and let $\alpha\in\R$. Then $A$ is said to be
\begin{enumerate}
\item \emph{$\alpha$-monotone} if \label{def:monotone_mono}
\begin{equation*}
\langle x-y,u-v\rangle\geq \alpha\|x-y\|^2,\quad\forall (x,u),(y,v)\in\gra A;
\end{equation*}
\item \emph{$\alpha$-comonotone} if $A^{-1}$ is $\alpha$-monotone, i.e., \label{def:monotone_comono}
\begin{equation*}
\langle x-y,u-v\rangle\geq \alpha\|u-v\|^2,\quad\forall (x,u),(y,v)\in\gra A.
\end{equation*}
\end{enumerate}
An $\alpha$-monotone (resp. $\alpha$-comonotone) operator $A$ is said to be \emph{maximally $\alpha$-monotone} (resp. \emph{maximally $\alpha$-comonotone}) if there is no $\alpha$-monotone (resp. $\alpha$-comonotone) operator $B:\R^n\tto\R^n$ such that $\gra A$ is properly contained in $\gra B$.
\end{definition}

\begin{remark}
We note that in the case where $\alpha=0$ in \Cref{def:monotone}, both $0$-monotonicity and $0$-comonotonicity simply mean \emph{monotonicity} (see, for example, \cite[Definition~20.1]{BC17}). In the case where $\alpha>0$, $\alpha$-monotonicity is also referred to as \emph{strong monotonicity} (see, e.g.,~\cite[Definition~22.1(iv)]{BC17}) and $\alpha$-comonotonicity is also referred to as \emph{cocoercivity} (see, e.g.,~\cite[Definition~4.10(iv)]{BC17}). In the case where $\alpha<0$, $\alpha$-monotonicity is also referred to as \emph{hypomonotonicity} (or \emph{weak monotonicity}) and $\alpha$-comonotonicity is also referred to as \emph{cohypomonotonicity} (see, e.g,~\cite[Definition 2.2]{CP04}).
\end{remark}

\begin{fact}[maximal monotonicity of the subdifferential]\label{f:subd_maxmon}
Let $\alpha\in\R$ and suppose that ${f:\R^n\to\left]-\infty,+\infty\right]}$ is $\alpha$-convex. Then the Fr\'echet subdifferential of~$f$, $\widehat{\partial} f:\R^n\tto\R^n$, is maximally $\alpha$-monotone.%
\end{fact}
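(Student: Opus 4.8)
The plan is to reduce the statement to the classical maximal monotonicity of the subdifferential of a proper lower semicontinuous convex function by peeling off a quadratic. Since $f$ is $\alpha$-convex, the function $g:=f-\frac{\alpha}{2}\|\cdot\|^2$ is convex (and, as I assume throughout, proper and lower semicontinuous, so that $g$ inherits these properties). The quadratic $h:=\frac{\alpha}{2}\|\cdot\|^2$ is everywhere differentiable with $\nabla h(x)=\alpha x$, so the sum rule for the Fr\'echet subdifferential with a differentiable summand in \Cref{f:subdif} gives $\subd f(x)=\subd g(x)+\alpha x$ at every $x\in\dom f=\dom g$. Because $g$ is convex, its Fr\'echet and convex subdifferentials coincide (again \Cref{f:subdif}), whence $\subd f=\partial g+\alpha\Id$. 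This identifies $\subd f$ as a shift of the convex subdifferential of $g$ by $\alpha\Id$.

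Next I would record the elementary fact that, for an arbitrary operator $B:\R^n\tto\R^n$, the operator $B+\alpha\Id$ is $\alpha$-monotone if and only if $B$ is monotone. Indeed, for $(x,u),(y,v)\in\gra B$ one has $\langle x-y,(u+\alpha x)-(v+\alpha y)\rangle=\langle x-y,u-v\rangle+\alpha\|x-y\|^2$, so the defining inequality of $\alpha$-monotonicity for $B+\alpha\Id$ is precisely the inequality of (ordinary) monotonicity for $B$. The same computation transfers maximality: the affine bijection $(x,u)\mapsto(x,u+\alpha x)$ of $\R^n\times\R^n$ carries $\gra B$ onto $\gra(B+\alpha\Id)$, preserves graph inclusions, and, by the identity above, maps monotone operators onto $\alpha$-monotone operators and back. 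Consequently a monotone $B$ is maximal as a monotone operator if and only if $B+\alpha\Id$ is maximal as an $\alpha$-monotone operator, since any strict $\alpha$-monotone enlargement of $B+\alpha\Id$ would pull back to a strict monotone enlargement of $B$, and conversely.

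Combining the two steps, it remains to apply the classical theorem of Rockafellar--Moreau that the convex subdifferential $\partial g$ of a proper lower semicontinuous convex function is maximally monotone; taking $B=\partial g$ then yields that $\subd f=\partial g+\alpha\Id$ is maximally $\alpha$-monotone, as claimed. I expect the only genuine mathematical content to be this invocation of maximal monotonicity of the convex subdifferential, which I treat as known; the main point requiring care is the maximality transfer in the second step, where one must verify that the shift by $\alpha\Id$ is an inclusion-preserving bijection between the two classes, so that maximal elements are sent to maximal elements. Everything else is the bookkeeping of passing between the Fr\'echet and convex subdifferentials and the algebraic rewriting of the monotonicity inequality.
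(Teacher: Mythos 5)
Your proof is correct and is essentially the same argument the paper relies on: the paper's ``proof'' is just a citation to \cite{DPadapt}, and the cited lemma is established exactly by your route --- writing $f=g+\frac{\alpha}{2}\|\cdot\|^2$ with $g$ convex, identifying $\subd f=\partial g+\alpha\Id$ via the sum rule and \Cref{f:subdif}\Cref{f:subdif_convex}, and transferring Rockafellar's maximal monotonicity of $\partial g$ through the graph bijection $(x,u)\mapsto(x,u+\alpha x)$. Your explicit addition of properness and lower semicontinuity is the right call: these hypotheses appear in the cited lemma, are tacitly assumed in the paper's statement, and are genuinely needed for the maximality conclusion.
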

\begin{proof}
See, e.g.,~\cite[Lemma~5.2]{DPadapt}.% 
\end{proof}

We recall that under certain assumptions on the monotonicity parameters, the resolvents of comonotone operators are conically averaged.

\begin{fact}[resolvents of comonotone operators]\label{f:resolvent_comono}
	Let $\alpha\in\R$ and set $\gamma>0$ such that $\gamma>-\alpha$. If $A:\R^n\tto\R^n$ is $\alpha$-comonotone, then
\begin{enumerate}[label={\rm(\roman*)}]
\item $J_{\gamma A}$ is single-valued and conically $\frac{\gamma}{2(\gamma+\alpha)}$-averaged; \label{f:resolvent_comono_I}
\item $\dom J_{\gamma A}=\R^n$ if and only if $A$ is maximally $\alpha$-comonotone. \label{f:resolvent_comono_II}
\end{enumerate}
\end{fact}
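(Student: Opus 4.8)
The plan is to treat the two parts separately, deriving both from a single inequality obtained by feeding the defining inequality of $\alpha$-comonotonicity into the resolvent identity, and then, for the domain statement, reducing to the classical Minty theorem via a correspondence between comonotone and monotone operators.

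For part \ref{f:resolvent_comono_I}, fix $x,y\in\dom J_{\gamma A}$ and pick $p\in J_{\gamma A}x$, $q\in J_{\gamma A}y$. By definition of the resolvent, $\tfrac1\gamma(x-p)\in A(p)$ and $\tfrac1\gamma(y-q)\in A(q)$, so $\alpha$-comonotonicity applied to these two pairs, after multiplying through by $\gamma$ and rearranging (writing $u:=p-q$ and $d:=x-y$, so that the increment of the second coordinates is $\tfrac1\gamma(d-u)$), yields
\[
(\gamma+2\alpha)\langle u,d\rangle \ \geq\ (\gamma+\alpha)\|u\|^2+\alpha\|d\|^2.
\]
Setting $x=y$ (hence $d=0$) forces $(\gamma+\alpha)\|u\|^2\le 0$; since $\gamma+\alpha>0$ this gives $u=0$, i.e. single-valuedness. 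With single-valuedness in hand, I would define $R:=(1-\tfrac1\theta)\Id+\tfrac1\theta J_{\gamma A}$ on $\dom J_{\gamma A}$, where $\theta=\frac{\gamma}{2(\gamma+\alpha)}$, so that $J_{\gamma A}=(1-\theta)\Id+\theta R$ holds by construction and it only remains to check that $R$ is nonexpansive. A direct expansion shows that $\|Rx-Ry\|^2\le\|x-y\|^2$ is equivalent, after multiplying by $\gamma+\alpha>0$, to exactly the displayed inequality, using $2(1-\theta)=\frac{\gamma+2\alpha}{\gamma+\alpha}$ and $1-2\theta=\frac{\alpha}{\gamma+\alpha}$. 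Hence $R$ is nonexpansive and $J_{\gamma A}$ is conically $\theta$-averaged.

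For part \ref{f:resolvent_comono_II}, the starting point is the identity $\dom J_{\gamma A}=\ran(\Id+\gamma A)=\ran(\gamma\Id+A^{-1})$, where the second equality follows by swapping argument and value through $u\in A(p)\Leftrightarrow p\in A^{-1}(u)$. Since $A$ is $\alpha$-comonotone, $A^{-1}$ is $\alpha$-monotone, so $B:=A^{-1}-\alpha\Id$ is monotone and $\gamma\Id+A^{-1}=(\gamma+\alpha)\bigl(\Id+\tfrac{1}{\gamma+\alpha}B\bigr)$. Because multiplication by the positive scalar $\gamma+\alpha$ is a bijection of $\R^n$, one gets $\dom J_{\gamma A}=\R^n$ if and only if $\ran\bigl(\Id+\tfrac{1}{\gamma+\alpha}B\bigr)=\R^n$, which by Minty's theorem holds if and only if $B$ is maximally monotone. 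It then remains to transport maximality back along the correspondence: inversion $(x,u)\mapsto(u,x)$ and the shift $(x,u)\mapsto(x,u-\alpha x)$ are graph bijections that preserve graph inclusion and map the classes $\alpha$-comonotone $\to$ $\alpha$-monotone $\to$ monotone, so $B$ maximally monotone is equivalent to $A^{-1}$ maximally $\alpha$-monotone, equivalently $A$ maximally $\alpha$-comonotone. Chaining these equivalences closes part \ref{f:resolvent_comono_II}.

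The computation underlying part \ref{f:resolvent_comono_I} is routine once the key inequality is recorded, so I expect no real difficulty there. The step requiring the most care is the maximality bookkeeping in part \ref{f:resolvent_comono_II}: the subtle point is that the graph-preserving reductions genuinely preserve \emph{maximality} and not merely membership in the relevant class, and that the equivalence between $A$ maximally $\alpha$-comonotone and $A^{-1}$ maximally $\alpha$-monotone must be read directly off \Cref{def:monotone}. This has to be stated cleanly to avoid circularity, since maximal $\alpha$-comonotonicity quantifies over $\alpha$-comonotone extensions rather than over monotone ones, and one wants the inclusion-preserving bijections to translate that quantifier faithfully into the monotone world where Minty's theorem applies.
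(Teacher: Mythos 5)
Your proof is correct. Note, however, that the paper does not actually prove this Fact: its ``proof'' is a citation to \cite[Propositions~3.7 and 3.8(i)]{BDP19} and \cite[Proposition~3.7(v)\&(vi)]{BMW20}, so your proposal supplies a self-contained argument where the paper defers entirely to the literature. Your route is essentially the one taken in those references. For part \ref{f:resolvent_comono_I}, the key inequality
$(\gamma+2\alpha)\langle u,d\rangle \geq (\gamma+\alpha)\|u\|^2+\alpha\|d\|^2$
is exactly what one gets by feeding the resolvent relations into $\alpha$-comonotonicity, and your verification that nonexpansiveness of $R:=(1-\tfrac1\theta)\Id+\tfrac1\theta J_{\gamma A}$ is equivalent to this inequality checks out: with $\theta=\frac{\gamma}{2(\gamma+\alpha)}$ one indeed has $1-2\theta=\frac{\alpha}{\gamma+\alpha}$ and $2(1-\theta)=\frac{\gamma+2\alpha}{\gamma+\alpha}$, and single-valuedness follows from the case $d=0$ since $\gamma+\alpha>0$. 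For part \ref{f:resolvent_comono_II}, the chain $\dom J_{\gamma A}=\ran(\Id+\gamma A)=\ran(\gamma\Id+A^{-1})=(\gamma+\alpha)\ran\bigl(\Id+\tfrac{1}{\gamma+\alpha}B\bigr)$ with $B:=A^{-1}-\alpha\Id$ monotone is the standard reduction, and your appeal to Minty's theorem together with the inclusion-preserving graph bijections (inversion and the shift by $-\alpha\Id$) correctly transports maximality; this is the same bookkeeping the paper itself uses later in \Cref{l:graph} and in \Cref{f:maxcom}\ref{f:maxcom_I}. You were right to flag the maximality transfer as the delicate step --- the quantifier over $\alpha$-comonotone extensions must be translated faithfully into a quantifier over monotone extensions, which the graph bijections do --- and your handling of it is sound. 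The only cosmetic quibble is in part \ref{f:resolvent_comono_I}: the displayed inequality requires multiplying the comonotonicity inequality through by $\gamma^2$ (or by $\gamma$ twice), not once, but this does not affect correctness.
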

\begin{proof}
See~\cite[Propositions~3.7 and 3.8(i)]{BDP19} and \cite[Proposition~3.7(v)\&(vi)]{BMW20}.%
\end{proof}

\begin{fact}[maximal comonotonicity]\label{f:maxcom}
Let $\alpha\in\R$ and $A:\R^n\tto\R^n$. The following hold.
\begin{enumerate}
\item $A$ is maximally $\alpha$-comonotone $\iff$ $A^{-1}$ is maximally $\alpha$-monotone.\label{f:maxcom_I}
\item Suppose that $\alpha\geq 0$. Then \label{f:maxcom_II}
\begin{equation*}
A \text{ is maximally } \alpha\text{-comonotone}\iff \begin{array}{c} A \text{ is } \alpha\text{-comonotone}\\ \text{and maximally monotone.}\end{array}
\end{equation*}
\end{enumerate}
\end{fact}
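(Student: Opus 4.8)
The plan is to treat the two items separately: item~(i) follows from the involutive nature of operator inversion, while item~(ii) follows from the resolvent characterisation recorded in \Cref{f:resolvent_comono}.

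For item~(i), I would exploit that the map $B\mapsto B^{-1}$ is an involution on set-valued operators that reverses each pair in the graph, and hence preserves graph inclusions: $\gra B\subseteq\gra C$ if and only if $\gra B^{-1}\subseteq\gra C^{-1}$, and $\gra B=\gra C$ if and only if $\gra B^{-1}=\gra C^{-1}$. By \Cref{def:monotone}, $A$ is $\alpha$-comonotone exactly when $A^{-1}$ is $\alpha$-monotone. Thus an $\alpha$-comonotone extension $B\supseteq A$ corresponds bijectively, via $B\mapsto B^{-1}$, to an $\alpha$-monotone extension $B^{-1}\supseteq A^{-1}$, and this correspondence carries proper extensions to proper extensions. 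Consequently, $A$ admits no proper $\alpha$-comonotone extension if and only if $A^{-1}$ admits no proper $\alpha$-monotone extension, which is precisely the asserted equivalence. This step is routine; the only point to check is that every $\alpha$-monotone operator $C$ extending $A^{-1}$ is itself the inverse of an $\alpha$-comonotone operator extending $A$, which is immediate since $(C^{-1})^{-1}=C$.

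For item~(ii), I would first record that when $\alpha\ge 0$, $\alpha$-comonotonicity implies monotonicity, since $\langle x-y,u-v\rangle\ge\alpha\|u-v\|^2\ge 0$; equivalently, $A$ is then also $0$-comonotone. Fix any $\gamma>0$; because $\alpha\ge 0$ we have $\gamma>-\alpha$, and trivially $\gamma>0=-0$, so \Cref{f:resolvent_comono}\ref{f:resolvent_comono_II} is applicable both with parameter $\alpha$ and with parameter $0$. The key observation is that the condition $\dom J_{\gamma A}=\R^n$ appearing there does not depend on the comonotonicity parameter. Applying the equivalence with parameter $\alpha$ gives \[A\text{ maximally }\alpha\text{-comonotone}\iff\dom J_{\gamma A}=\R^n,\] while applying it with parameter $0$ (legitimate since $A$ is $0$-comonotone, and maximal $0$-comonotonicity is maximal monotonicity) gives \[A\text{ maximally monotone}\iff\dom J_{\gamma A}=\R^n.\] Chaining these two equivalences through the common middle condition shows that, for $\alpha$-comonotone $A$ with $\alpha\ge 0$, maximal $\alpha$-comonotonicity and maximal monotonicity coincide; together with the trivial implication ``maximally $\alpha$-comonotone $\Rightarrow$ $\alpha$-comonotone'' this is exactly the claim.

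I expect the main obstacle to be the forward implication of item~(ii): maximal monotonicity is maximality within the \emph{larger} class of all monotone operators, so it cannot be deduced from maximal $\alpha$-comonotonicity by a direct graph-comparison argument, since a monotone extension of $A$ need not be $\alpha$-comonotone. The reverse implication is elementary—any $\alpha$-comonotone extension of a maximally monotone $A$ is in particular a monotone extension, hence equals $A$—but the forward implication genuinely needs the external, parameter-free characterisation of maximality furnished by \Cref{f:resolvent_comono}\ref{f:resolvent_comono_II}, which is a comonotone refinement of the classical Minty surjectivity theorem. The one detail requiring care is verifying the applicability hypotheses $\gamma>-\alpha$ and $\gamma>0$ simultaneously, which holds automatically precisely because $\alpha\ge 0$.
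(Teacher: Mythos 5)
Your proposal is correct. For item \ref{f:maxcom_I} your argument is the same as the paper's, which simply states that the equivalence follows directly from \Cref{def:monotone}: inversion flips graphs and hence preserves proper inclusions, so $\alpha$-comonotone extensions of $A$ correspond bijectively to $\alpha$-monotone extensions of $A^{-1}$. For item \ref{f:maxcom_II} you take a genuinely different route: the paper handles it purely by citation, applying \cite[Proposition~3.5(i)]{DPadapt} to the operator $A^{-1}$ (alternatively \cite[Proposition~3.2(ii)]{BDP19}), whereas you derive it from \Cref{f:resolvent_comono}\ref{f:resolvent_comono_II}, using that for $\alpha\geq 0$ an $\alpha$-comonotone operator is also $0$-comonotone, that maximal $0$-comonotonicity is by definition maximal monotonicity, and that the condition $\dom J_{\gamma A}=\R^n$ does not involve the parameter; applying the resolvent characterisation once with parameter $\alpha$ and once with parameter $0$ (both legitimate since $\gamma>0\geq-\alpha$) and chaining through this common condition yields the equivalence. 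Your logic is sound, and you correctly isolate the only nontrivial direction --- maximal $\alpha$-comonotonicity implies maximal monotonicity --- which indeed cannot be obtained by graph comparison alone, since a monotone extension need not be $\alpha$-comonotone; the converse direction is elementary because every $\alpha$-comonotone extension is monotone when $\alpha\geq 0$. What your route buys is self-containedness: item \ref{f:maxcom_II} becomes a consequence of a fact already stated in the paper rather than of a separate external proposition, and it exposes that the substantive input is the Minty-type surjectivity contained in \Cref{f:resolvent_comono}\ref{f:resolvent_comono_II}. What the paper's citation buys is brevity, together with one caveat you should note: \Cref{f:resolvent_comono} is itself imported from \cite{BDP19,BMW20}, where the full-domain characterisation and the maximality statement proved here are close relatives, so your derivation is self-contained only relative to the paper's stated facts, not relative to the original proofs in the literature.
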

\begin{proof}
\ref{f:maxcom_I}: Follows directly from Definition~\ref{def:monotone}. \ref{f:maxcom_II}: Apply \cite[Proposition~3.5(i)]{DPadapt} to the operator $A^{-1}$ (alternatively, see~\cite[Proposition~3.2(ii)]{BDP19}).%
\end{proof}

\begin{lemma}[closedness of graph]\label{l:graph} 
Let $\alpha\in\R$ and let $A:\R^n\tto\R^n$ be maximally $\alpha$-comonotone. Then $\gra A$ is closed.
\end{lemma}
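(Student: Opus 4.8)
The plan is to deduce closedness of $\gra A$ from the maximality of $A$ as an $\alpha$-comonotone operator, by exploiting the characterization of resolvents in \Cref{f:resolvent_comono}. The key observation is that graph closedness is equivalent to a statement about limits: if $(x_k,u_k)\in\gra A$ with $(x_k,u_k)\to(x,u)$, then we must show $(x,u)\in\gra A$. Rather than arguing directly with the comonotonicity inequality (which controls $\langle x_k-y,u_k-v\rangle$ against $\|u_k-v\|^2$ and survives passage to the limit only against a \emph{fixed} pair $(y,v)$), I would first convert the problem into one about the resolvent, where continuity is already available.

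First I would fix $\gamma>0$ with $\gamma>-\alpha$, so that \Cref{f:resolvent_comono_I} applies: since $A$ is $\alpha$-comonotone, $J_{\gamma A}$ is single-valued and conically averaged, hence in particular Lipschitz continuous (a conically $\theta$-averaged map is $\theta$-Lipschitz relative to a nonexpansive $R$, and is therefore continuous). Moreover, because $A$ is \emph{maximally} $\alpha$-comonotone, \Cref{f:resolvent_comono_II} gives $\dom J_{\gamma A}=\R^n$. Thus $J_{\gamma A}:\R^n\to\R^n$ is a single-valued continuous map defined on all of $\R^n$. The crucial algebraic identity is that membership in $\gra A$ is captured by the resolvent: $(x,u)\in\gra A$ if and only if $u\in A(x)$, which holds if and only if $x+\gamma u\in(\Id+\gamma A)(x)$, i.e. $x=J_{\gamma A}(x+\gamma u)$. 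So I would rewrite $\gra A$ as the set $\{(x,u) : x = J_{\gamma A}(x+\gamma u)\}$.

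With this reformulation, closedness becomes immediate. Take $(x_k,u_k)\in\gra A$ with $(x_k,u_k)\to(x,u)$. Then $x_k = J_{\gamma A}(x_k+\gamma u_k)$ for every $k$. Since $x_k+\gamma u_k \to x+\gamma u$ and $J_{\gamma A}$ is continuous on all of $\R^n$, the right-hand side converges to $J_{\gamma A}(x+\gamma u)$, while the left-hand side converges to $x$. Hence $x = J_{\gamma A}(x+\gamma u)$, which by the equivalence above means $u\in A(x)$, i.e. $(x,u)\in\gra A$. This shows $\gra A$ is closed.

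The only genuine subtlety — the step I would be most careful about — is justifying that the resolvent is a \emph{total} single-valued continuous function, since without totality ($\dom J_{\gamma A}=\R^n$) the argument would stall when $x+\gamma u$ lands outside the domain, and without single-valuedness the identity $x=J_{\gamma A}(x+\gamma u)$ would not be a genuine equation. Both are supplied precisely by \Cref{f:resolvent_comono}, and this is exactly where maximality enters through part~\ref{f:resolvent_comono_II}; $\alpha$-comonotonicity alone would give single-valuedness and continuity but not totality. I should also confirm that a conically $\theta$-averaged map is continuous, which follows directly from the defining decomposition $T=(1-\theta)\Id+\theta R$ with $R$ nonexpansive, so $T$ is Lipschitz with constant $|1-\theta|+\theta$. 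Everything else is routine passage to the limit.
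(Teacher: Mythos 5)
Your proof is correct, but it takes a genuinely different route from the paper's. The paper converts maximal $\alpha$-comonotonicity of $A$ into maximal monotonicity of $B:=A^{-1}-\alpha\Id$ (via \Cref{f:maxcom}\ref{f:maxcom_I}), invokes the standard fact that maximally monotone operators have closed graphs, and then transports the limit $(x_k,u_k)\to(x,u)$ through the graph transformation $(x,u)\mapsto(u,x-\alpha u)$. You instead realize $\gra A$ as the solution set of the equation $x=J_{\gamma A}(x+\gamma u)$, where $J_{\gamma A}$ is single-valued and Lipschitz by \Cref{f:resolvent_comono}\ref{f:resolvent_comono_I}, and---this is exactly where maximality enters---everywhere defined by \Cref{f:resolvent_comono}\ref{f:resolvent_comono_II}; closedness is then immediate from continuity. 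Both arguments are complete: the paper's is shorter given the external citation and stays at the level of graph algebra, while yours is self-contained relative to a fact the paper has already stated and isolates precisely how maximality is used (totality of the resolvent). One remark: your opening dismissal of the ``direct'' argument is unwarranted, since that argument does work---for each \emph{fixed} $(y,v)\in\gra A$ the inequality $\langle x_k-y,u_k-v\rangle\geq\alpha\|u_k-v\|^2$ passes to the limit, so $\gra A\cup\{(x,u)\}$ is again $\alpha$-comonotone and maximality forces $(x,u)\in\gra A$; that is arguably the most elementary proof of the three, and it is essentially what underlies the closed-graph fact for maximally monotone operators that the paper cites.
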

\begin{proof}
Let $\seq{x_k,u_k}{k}\subseteq\gra A$ such that $(x_k,u_k)\to (x,u)\in\R^n\times\R^n$. By employing \Cref{f:maxcom}\ref{f:maxcom_I}, we see that $A^{-1}$ is maximally $\alpha$-monotone, which is equivalent to $B:=A^{-1}-\alpha\Id$ being maximally monotone. Consequently, $\gra B$ is closed (see, for example, \cite[Proposition 20.38]{BC17}). Since $\seq{u_k,x_k-\alpha u_k}{k}\subseteq \gra B$ and $(u_k,x_k-\alpha u_k)\to (u,x-\alpha u)$, we conclude that $(u,x-\alpha u)\in\gra B$, which, in turn, implies that $(x,u)\in \gra A$.%
\end{proof}

We conclude this section by recalling the convergence of the \emph{adaptive Douglas--Rachford (aDR)} algorithm for maximally comonotone operators. The aDR can be viewed as an extension of the classical Douglas--Rachford splitting algorithm~\cite{DR56,LM79}, originally utilized to find a zero of the sum of two maximally monotone operators by employing their resolvents. The aDR algorithm was recently presented and studied in \cite{DPadapt} in order to find a zero of the sum of a strongly monotone operator and a weakly monotone operator. This analysis was later extended in \cite{BDP19} to include, in particular, the case of a strongly comonotone operator and a weakly comonotone operator. Convergence results for the shadow sequence of the aDR (i.e., the image of the aDR sequence under the resolvent) in infinite-dimensional spaces have been recently provided in~\cite{BCP20}. We recall the following fact regarding the convergence of the aDR for comonotone operators.

\begin{fact}[aDR for comonotone operators]\label{f:aDR_comono}
Let $\alpha,\beta\in\R$ be such that $\alpha+\beta\geq 0$. Let $A:\R^n\tto\R^n$ be a maximally $\alpha$-comonotone operator and let $B:\R^n\tto\R^n$ be a maximally $\beta$-comonotone operator such that $\zer(A+B)\neq \emptyset$. Suppose that $(\gamma,\delta)\in\R^2_{++}$ satisfy
\begingroup
\allowdisplaybreaks
\begin{subequations}\label{e:aDR_comono}
\begin{align}
0<\gamma+2\alpha=\delta, &\quad \text{ if } \alpha+\beta=0,\label{eq:assumtions_comonoI}\\
\text{or } \quad (\gamma+\delta)^2<4(\gamma+\alpha)(\delta+\beta), &\quad \text{ if } \alpha+\beta>0;\label{eq:assumtions_comonoII}
\end{align}
\end{subequations}
\endgroup
and set $(\lambda,\mu)\in\R^2_{++}$ by
\begin{equation}\label{e:lm_mu}
(\lambda-1)(\mu-1)=1 \quad\text{and}\quad \delta=(\lambda-1)\gamma.
\end{equation}
Set further $\kappa\in{]0,\overline{\kappa}[}$ where
\begin{equation}\label{e:aDR_kappa}
\overline{\kappa}:=\left\{\begin{array}{ll}
1, & \text{if } \alpha+\beta=0;\\
\displaystyle\frac{4(\gamma+\alpha)(\delta+\beta)-(\gamma+\delta)^2}{2(\gamma+\delta)(\alpha+\beta)}, & \text{if } \alpha+\beta>0.
\end{array}\right.
\end{equation}
Finally, set $x_0\in\R^n$ and let $(x_k)_{k=0}^{\infty}$ be generated by the recurrence
\begin{equation}\label{eq:aDR_iter}
x_{k+1}=\Tdr(x_k):=(1-\kappa)x_k+\kappa J^{\lambda}_{\gamma A}J^{\mu}_{\delta B}(x_k),\quad k=0,1,2,\ldots.
\end{equation}
Then 
\begin{enumerate}[label={\rm(\roman*)}]
\item $x_k\to x^\star\in \Fix \Tdr \text{ and } J_{\delta B}(x^\star)\in \zer(A+B);$  \label{f:aDR_comonoI}
\item $J_{\delta B}(x_k)\to J_{\delta B}(x^\star)\in \zer(A+B);$\label{f:aDR_comonoII}
\item $J_{\delta B}(x_k)-J_{\gamma A}J^{\mu}_{\delta B}(x_k)\to 0$.\label{f:aDR_comonoIII}
\end{enumerate}
\end{fact}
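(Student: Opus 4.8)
The plan is to reduce the convergence of the aDR iteration to the fixed-point theory of conically averaged mappings, which is exactly the setting where the operator $\Tdr$ naturally lives. First I would verify that the resolvents appearing in \eqref{eq:aDR_iter} are well-defined single-valued mappings on all of $\R^n$: since $A$ is maximally $\alpha$-comonotone and $\gamma+\alpha>0$ (which follows from \eqref{e:aDR_comono} in both cases), \Cref{f:resolvent_comono} gives that $J_{\gamma A}$ is single-valued, full-domain and conically $\frac{\gamma}{2(\gamma+\alpha)}$-averaged; likewise $J_{\delta B}$ is conically $\frac{\delta}{2(\delta+\beta)}$-averaged with full domain. The relaxed resolvents $J^\lambda_{\gamma A}$ and $J^\mu_{\delta B}$ are then conically averaged with parameters scaled by $\lambda$ and $\mu$ respectively, so the composition $J^\lambda_{\gamma A}J^\mu_{\delta B}$ is conically averaged with a parameter built from these constants via the composition rule for conically averaged mappings (as developed in \cite{BMW20,BDP19}).

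The crux is to show that the conical averagedness parameter of the composition is at most $1$, equivalently that the composition is genuinely averaged (not merely conically nonexpansive), since only then does the Krasnosel'ski\u{\i}--Mann-type iteration converge. This is precisely where the delicate parameter conditions \eqref{e:aDR_comono} and the definition \eqref{e:aDR_kappa} of $\overline\kappa$ enter. I would substitute the averagedness constants of the two relaxed resolvents into the composition formula and, using the relations \eqref{e:lm_mu} tying $(\lambda,\mu)$ to $(\gamma,\delta)$, simplify the resulting expression. The inequality $(\gamma+\delta)^2<4(\gamma+\alpha)(\delta+\beta)$ in the case $\alpha+\beta>0$ should be exactly the algebraic condition guaranteeing that the composition's parameter is finite and that $\kappa<\overline\kappa$ keeps the relaxed iterate $\Tdr$ averaged; the boundary case $\alpha+\beta=0$ with $\gamma+2\alpha=\delta$ should make the composition nonexpansive so that any $\kappa\in{]0,1[}$ works.

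Once $\Tdr$ is shown to be averaged (equivalently $\theta$-averaged for some $\theta\in{]0,1[}$) and $\Fix\Tdr\neq\emptyset$, the convergence $x_k\to x^\star\in\Fix\Tdr$ in part~\ref{f:aDR_comonoI} follows from the standard fixed-point iteration theorem for averaged mappings, e.g. \cite[Theorem~5.15]{BC17}. The nonemptiness of $\Fix\Tdr$ must be linked to the hypothesis $\zer(A+B)\neq\emptyset$: I would establish the fixed-point characterization showing $x\in\Fix\Tdr$ if and only if $J_{\delta B}(x)\in\zer(A+B)$, via the equivalence $x=J^\lambda_{\gamma A}J^\mu_{\delta B}(x)$ unwound through the definitions of the resolvents and the relations \eqref{e:lm_mu}; this simultaneously yields the second assertion in \ref{f:aDR_comonoI}. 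For part~\ref{f:aDR_comonoII}, continuity of $J_{\delta B}$ (it is conically averaged, hence Lipschitz) applied to $x_k\to x^\star$ gives $J_{\delta B}(x_k)\to J_{\delta B}(x^\star)$, and the fixed-point characterization identifies the limit as a point in $\zer(A+B)$. Finally, part~\ref{f:aDR_comonoIII} follows by noting that $x_{k+1}-x_k=\kappa(J^\lambda_{\gamma A}J^\mu_{\delta B}(x_k)-x_k)\to 0$ (a by-product of the averaged-iteration analysis), and then rewriting $J_{\delta B}(x_k)-J_{\gamma A}J^\mu_{\delta B}(x_k)$ in terms of this vanishing increment using the affine definitions of the relaxed resolvents.

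The main obstacle I anticipate is the algebraic verification in the second paragraph: correctly composing the two conical averagedness constants and showing that the threshold $\overline\kappa$ in \eqref{e:aDR_kappa} is exactly the sharp bound making $\Tdr$ averaged. This bookkeeping, while elementary, is where the precise form of conditions \eqref{eq:assumtions_comonoI}--\eqref{eq:assumtions_comonoII} is forced, and any convergence gain of the aDR over the classical DR hinges on getting this constant right. I expect that this entire fact is in fact quoted from \cite{BDP19} (the aDR convergence theorem), so the proof here would most likely consist of citing that reference directly rather than reproving it; the sketch above is how one would reconstruct it from first principles.
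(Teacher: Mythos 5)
Your proposal is correct and matches the paper's approach: the paper's proof consists precisely of verifying well-definedness via \Cref{f:resolvent_comono}, citing \cite[Theorem~5.4]{BDP19} for $x_k\to x^\star\in\Fix\Tdr$ and $x_k-\Tdr(x_k)\to 0$, invoking \cite[Lemma~4.1]{DPadapt} for the fixed-point/zero correspondence giving \Cref{f:aDR_comonoI} and \Cref{f:aDR_comonoIII}, and using Lipschitz continuity of the conically averaged $J_{\delta B}$ for \Cref{f:aDR_comonoII} --- exactly as you anticipated in your final paragraph, with your first-principles sketch being a faithful outline of the cited result's own argument.
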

\begin{proof}
We note that \eqref{e:aDR_comono} implies that $\alpha+\gamma>0$ and $\delta+\beta>0$. In view of \Cref{f:resolvent_comono}, $J_{\gamma A}$ and $J_{\delta B}$ are single-valued with full domain and, consequently, the iteration in \eqref{eq:aDR_iter} is well defined.
By \cite[Theorem 5.4]{BDP19}, we arrive at
\[
x_k\to x^\star\in \Fix\Tdr \quad\text{and}\quad x_{k}-\Tdr(x_{k})\to 0
\]
which, combined with \cite[Lemma~4.1]{DPadapt}, implies \Cref{f:aDR_comonoI} and \Cref{f:aDR_comonoIII}. Finally, by invoking \Cref{f:resolvent_comono}\Cref{f:resolvent_comono_I} we see that $J_{\delta B}$ is conically averaged. This implies that $J_{\delta B}$ is Lipschitz continuous and, consequently,  \Cref{f:aDR_comonoII} follows~from~\Cref{f:aDR_comonoI}.%
\end{proof}

\section{Adaptive ADMM}\label{sec:aADMM}

The adaptive alternating direction method of multipliers requires natural generalized convexity assumptions as well as traditional assumptions on \eqref{eq:P}. We divide these conditions and constraint qualifications into three categories: generalized convexity assumptions, existence of solutions for \eqref{eq:P} and existence and well posedness of our iterative steps. Compared with the traditional framework of the classical ADMM, we show that our settings are more general and admit a wider class of functions within the first category while maintaining  traditional assumptions and constraint qualifications in the second and third categories. To this end we recollect the most common and widely imposed conditions on the classical ADMM as well as equivalences and relations between them. We {divide} our discussion into the following subsections: convexity qualifications, critical points and minimizes, introduction of the aADMM, constraint qualifications and related conditions for existence of our iterative steps.

\subsection{Convexity Assumptions}

One of the underlying assumptions for the classical ADMM is that the functions $f$ and $g$ in~\eqref{eq:P} are proper, lower semicontinuous and convex. We adapt to a wider class of functions via the following natural assumption.

\begin{assumption}\label{as:conv_param}
Let $M\in\R^{m\times n}$ be a nonzero matrix. We assume that the function $f:\R^n\to\left]-\infty,+\infty\right]$ is proper, lower semicontinuous and $\alpha$-convex, and $g:\R^m\to\left]-\infty,+\infty\right]$ is proper, lower semicontinuous and $\beta$-convex where $\alpha, \beta\in\R$ are parameters such that 
\begin{equation*}
\alpha\geq 0\quad\text{and}\quad\alpha+\beta\|M\|^2\geq 0.
\end{equation*}
\end{assumption}

In order to characterize the solutions of \eqref{eq:P} under \cref{as:conv_param}, we will employ the following lemma.

\begin{lemma}
\label{l:goM}
Let $M\in\R^{m\times n}$. Suppose that $g:\mathbb{R}^m\to\left]-\infty,+\infty\right]$ is a $\beta$-convex function where $\beta<0$. Then $g\circ M$ is $\beta\|M\|^2$-convex.
\end{lemma}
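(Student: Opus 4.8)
The plan is to work directly from the defining inequality of $\beta$-convexity and push the convex combination through the linear map $M$. Recall that $g$ being $\beta$-convex means that for all $u,v\in\R^m$ and $\lambda\in[0,1]$,
\[
g((1-\lambda)u+\lambda v)\leq(1-\lambda)g(u)+\lambda g(v)-\frac{\beta}{2}\lambda(1-\lambda)\|u-v\|^2,
\]
equivalently that $g-\frac{\beta}{2}\|\cdot\|^2$ is convex. First I would fix $x,y\in\R^n$ and $\lambda\in[0,1]$ and set $u:=Mx$, $v:=My$. Since $M$ is linear, $M((1-\lambda)x+\lambda y)=(1-\lambda)u+\lambda v$, so applying the inequality above to $u,v$ and rewriting every term through $M$ yields
\[
(g\circ M)((1-\lambda)x+\lambda y)\leq(1-\lambda)(g\circ M)(x)+\lambda(g\circ M)(y)-\frac{\beta}{2}\lambda(1-\lambda)\|M(x-y)\|^2.
\]

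The remaining step is to replace $\|M(x-y)\|^2$ by $\|M\|^2\|x-y\|^2$ in the quadratic term with the correct sign. The operator-norm bound $\|Mz\|\leq\|M\|\,\|z\|$ gives $\|M(x-y)\|^2\leq\|M\|^2\|x-y\|^2$, while the coefficient $-\frac{\beta}{2}\lambda(1-\lambda)$ is nonnegative precisely because $\beta<0$ and $\lambda(1-\lambda)\geq0$. Multiplying the norm inequality by this nonnegative factor preserves its direction, producing
\[
-\frac{\beta}{2}\lambda(1-\lambda)\|M(x-y)\|^2\leq-\frac{\beta\|M\|^2}{2}\lambda(1-\lambda)\|x-y\|^2,
\]
which, chained with the previous display, is exactly the defining inequality for $g\circ M$ to be $\beta\|M\|^2$-convex.

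The only delicate point --- and the reason the hypothesis $\beta<0$ is imposed --- is this sign bookkeeping: if $\beta$ were positive the coefficient would flip and the inequality $\|M(x-y)\|^2\leq\|M\|^2\|x-y\|^2$ would run in the wrong direction, so one would instead need a \emph{lower} bound on $\|M(x-y)\|$, which fails in general (for instance when $M$ has a nontrivial kernel, where the sharp constant involves the smallest singular value rather than $\|M\|^2$). Beyond this sign check the argument is entirely routine, so I do not expect any serious obstacle.

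An alternative, essentially equivalent route is to write $g\circ M=(h\circ M)+\frac{\beta}{2}\|M\cdot\|^2$ with $h:=g-\frac{\beta}{2}\|\cdot\|^2$ convex. Then $h\circ M$ is convex as the composition of a convex function with a linear map, and the convexity of the remainder $\frac{\beta}{2}\|M\cdot\|^2-\frac{\beta\|M\|^2}{2}\|\cdot\|^2$ follows from $M^{T}M\preceq\|M\|^2\Id$ together with $\beta<0$, which makes its Hessian $\beta(M^{T}M-\|M\|^2\Id)$ positive semidefinite; summing the two convex pieces shows $g\circ M-\frac{\beta\|M\|^2}{2}\|\cdot\|^2$ is convex, i.e.\ $g\circ M$ is $\beta\|M\|^2$-convex.
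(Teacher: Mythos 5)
Your main argument is exactly the paper's proof: fix $x,y,\lambda$, use linearity of $M$ and the $\beta$-convexity inequality of $g$ at $Mx,My$, then replace $\|M(x-y)\|^2$ by $\|M\|^2\|x-y\|^2$, which is legitimate precisely because the coefficient $-\tfrac{\beta}{2}\lambda(1-\lambda)$ is nonnegative when $\beta<0$. Your sign bookkeeping and the alternative Hessian-based decomposition at the end are both correct, but the first paragraph already reproduces the paper's argument in full.
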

\begin{proof} Let $x, y\in\R^n$ and $\lambda\in [0,1]$. Then the $\beta$-convexity of $g$ implies that
\begin{align*}
(g\circ M)\big((1-\lambda)x+\lambda y\big)=~&g\big((1-\lambda)Mx+\lambda My\big)\\
\leq~& (1-\lambda)g(Mx)+\lambda g(My)\\& -\dfrac{\beta}{2}\lambda(1-\lambda)\|Mx-My\|^2\\
\leq~& (1-\lambda)(g\circ M)(x)+\lambda(g\circ M)(y)\\& -\Big(\dfrac{\beta}{2}\|M\|^2\Big)\lambda(1-\lambda)\|x-y\|^2,
\end{align*}
i.e., $g\circ M$ is $\beta\|M\|^2$-convex.%
\end{proof}

\begin{lemma}\label{rem:conv_sum}
Let \Cref{as:conv_param} hold. Then $(x^\star,z^\star)$ is a solution of \eqref{eq:P} if and only if
\begin{equation*}
Mx^\star=z^\star \quad\text{and}\quad 0\in\partial(f+g\circ M)(x^\star).
\end{equation*}
\end{lemma}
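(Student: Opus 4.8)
The plan is to strip the linear constraint from \eqref{eq:P} and reduce it to an \emph{unconstrained} minimization of the single function $h:=f+g\circ M$. First I would note that a pair $(x,z)$ is feasible for \eqref{eq:P} exactly when $z=Mx$, so that the feasible points are precisely the pairs $(x,Mx)$ with $x\in\R^n$, and the objective evaluated at such a pair equals $f(x)+g(Mx)=h(x)$. Hence $(x^\star,z^\star)$ solves \eqref{eq:P} if and only if $Mx^\star=z^\star$ and $x^\star$ minimizes $h$ over $\R^n$. This disposes of the constraint and isolates the real content of the statement: characterizing the (global) minimizers of $h$.

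The characterization is then Fermat's rule. By the very definition of the convex subdifferential, $0\in\partial h(x^\star)$ reads $h(x^\star)\le h(y)$ for every $y\in\R^n$; that is, $x^\star$ minimizes $h$ globally precisely when $0\in\partial h(x^\star)$. This is exactly \Cref{f:subdif}\,\ref{f:subdif_minc} applied to $h$, and it is valid for any proper $h$. Combined with the reduction of the first paragraph — and using $\partial(f+g\circ M)=\partial h$ — this already yields the stated equivalence. The role of \Cref{as:conv_param} (and of the preceding \Cref{l:goM}) is to guarantee that $h$ is in fact \emph{convex}, so that $\partial h$ is a bona fide convex subdifferential and the optimality condition $0\in\partial h(x^\star)$ is the expected convex one underpinning the later duality analysis. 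I would verify convexity by cases on the sign of $\beta$: if $\beta\ge 0$ then $g$, and hence $g\circ M$, is convex, so $h$ is $\alpha$-convex with $\alpha\ge 0$; if $\beta<0$ then \Cref{l:goM} gives that $g\circ M$ is $\beta\|M\|^2$-convex, so by additivity of the convexity parameter $h$ is $(\alpha+\beta\|M\|^2)$-convex, which is convex precisely because \Cref{as:conv_param} supplies $\alpha+\beta\|M\|^2\ge 0$.

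I do not expect a serious obstacle here: once the problem is recast as minimizing $h$, both implications follow directly from the definition of $\partial$. The only points that need care are bookkeeping ones — namely that $h$ is proper (which holds automatically, since whenever $(x^\star,z^\star)$ solves \eqref{eq:P} or $0\in\partial h(x^\star)$ the point $x^\star$ lies in $\dom h$), and the two-case verification of convexity via \Cref{l:goM} together with the assumption $\alpha+\beta\|M\|^2\ge 0$. Thus the substance of the argument is simply the constraint-elimination step combined with Fermat's rule.
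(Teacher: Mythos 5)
Your proposal is correct and follows essentially the same route as the paper: reduce \eqref{eq:P} to the unconstrained minimization of $f+g\circ M$, verify convexity of this sum by the same two-case argument on the sign of $\beta$ via \Cref{l:goM} and the assumption $\alpha+\beta\|M\|^2\geq 0$, and conclude with Fermat's rule (\Cref{f:subdif}\Cref{f:subdif_minc}). Your side observation that the equivalence itself only needs $f+g\circ M$ proper (since the convex-subdifferential definition encodes global minimality directly) is accurate, but it does not change the substance of the argument.
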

\begin{proof}
We note that \eqref{eq:P} is equivalent to the unconstrained optimization problem of minimizing $f+g\circ M$ over $\R^n$. We claim that $f+g\circ M$ is a convex function. Indeed, under \Cref{as:conv_param}, if $\beta\geq 0$, then $f$ as well as $g$ are convex and so is $f+g\circ M$. If $\beta<0$, then \cref{l:goM} implies that $g\circ M$ is $\beta\|M\|^2$-convex. Consequently, we see that $f+g\circ M$ is $(\alpha+\beta\|M\|^2)$-convex. In particular, $f+g\circ M$ is convex since $\alpha+\beta\|M\|^2\geq 0$. Finally, by recalling \Cref{f:subdif}\Cref{f:subdif_minc}, we conclude that the minimizers of \eqref{eq:P} are characterized by the first order optimality condition $0\in\partial(f+g\circ M)(x^\star)$.%
\end{proof}

\begin{remark}[on strongly-weakly convex settings]
\label{rem:reformulation}
Under \cref{as:conv_param}, problem \eqref{eq:P} can be referred to as a strongly-weakly convex problem, see, e.g., \cite{DPadapt,GHY17}. Splitting methods for this problem require computability of subdifferentials and their resolvents. We observe that by setting
\begin{equation*}
\tilde{f}:=f+\frac{\beta}{2}\|M(\cdot)\|^2\ \quad\ \text{and}\ \quad\ \tilde{g}:=g-\frac{\beta}{2}\|\cdot\|^2,
\end{equation*}
\eqref{eq:P} is equivalent to
\begin{equation}\label{eq:Phat}\tag{$\widehat{\mathcal{P}}$}
\min \big(\tilde{f}(x)+\tilde{g}(z)\big)
\quad\text{s.t.}\quad Mx=z.
\end{equation}
Under \cref{as:conv_param}, both $\tilde{f}$ and $\tilde{g}$ are convex. Indeed, a straightforward verification implies that $\tilde{g}$ is convex. Furthermore, if $\beta\geq 0$, then $\tilde{f}$ is convex because $f$ and $\frac{\beta}{2}\|M(\cdot)\|^2$ are convex. If $\beta<0$, then $\frac{\beta}{2}\|M(\cdot)\|^2$ is $\beta\|M\|^2$-convex by \cref{l:goM}. We note that $f$ is $\alpha$-convex and $\alpha+\beta\|M\|^2\geq 0$, so $\tilde{f}$ is $(\alpha+\beta\|M\|^2)$-convex, in particular, convex.

Consequently, one can apply the classical ADMM to~\eqref{eq:Phat} in order to solve \eqref{eq:P} with a similar computational difficulty level. A similar strategy was pointed out as an alternative to the adaptive DR algorithm in \cite[Remark~4.15]{DPadapt}.
\end{remark}

\begin{remark}[A non-symmetric scenario]
We would like to emphasize that this approach is not symmetric with respect to the weakly-strongly convexity assumptions; that is, we do not allow $f$ to be weakly-convex. The main reason is that even in the case where $g$ is strongly convex, we cannot guarantee the strong convexity of the composition $g\circ M$, as we did with the weak convexity in \Cref{l:goM}. Thus, we do not assess the convexity of the equivalent problem \eqref{eq:Phat} discussed in \Cref{rem:reformulation}.  
\end{remark}

We will experiment with the approach outlined in Remark~\ref{rem:reformulation} in Section~\ref{sec:exp}. However, from the theoretical perspective, we pursue a different path: We do not modify~\eqref{eq:P}, instead, we provide an adaptive version of the ADMM which is admissible under the strongly-weakly convex setting of~\eqref{eq:P}. To this end we provide duality relations with the recent adaptive DR algorithm \cite{DPadapt} which, in turn, are instrumental in the proof of convergence of our adaptive scheme. One of the justifications of our approach is that it complements and extends the natural and well known duality relation between the classical ADMM and the classical DR algorithm to the strongly-weakly convex setting. Experiments with our approach and some comparisons to the approach in Remark~\ref{rem:reformulation} are also included in Section~\ref{sec:exp}.
 In particular, we highlight the flexibility in the choice of parameters in our aADMM. 

\subsection{Critical Points}

We address the issue of critical points and saddle points of the Lagrangian $L_0$ in \eqref{e:lagrangian}, as well as their relations to the solutions of \eqref{eq:P}, under \cref{as:conv_param}.

\begin{definition}[critical points]
\label{d:crit}
We say that the tuple $(x^\star,z^\star,y^\star)\in\R^n\times\R^m\times\R^m$ is  a \emph{critical point} of the (unaugmented) Lagrangian $L_{0}$ of \eqref{eq:P} if
\begin{equation}\label{eq:critical}
-M^Ty^\star\in\partial f(x^\star),\quad y^\star\in\subd g(z^\star)\quad\text{and}\quad Mx^\star-z^\star=0.
\end{equation}
\end{definition}
We also recall that $(x^\star,z^\star,y^\star)$ is a {\em saddle point} of $L_0$ if
\begin{equation*}
L_0(x^\star,z^\star,y)\leq L_0(x^\star,z^\star,y^\star)\leq L_0(x,z,y^\star),
\quad
\forall (x,z,y)\in\R^n\times \R^m\times \R^m.
\end{equation*}

\begin{lemma}\label{lem:critpoint}
Let \cref{as:conv_param} hold. If $(x^\star,z^\star,y^\star)$ is a critical point of $L_{0}$, then $(x^\star,z^\star)$ is solution of \eqref{eq:P}.
\end{lemma}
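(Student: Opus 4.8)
The plan is to connect the critical point conditions in \eqref{eq:critical} to the optimality condition $0\in\partial(f+g\circ M)(x^\star)$ furnished by \Cref{rem:conv_sum}, since that lemma already reduces ``being a solution of \eqref{eq:P}'' to the feasibility relation $Mx^\star=z^\star$ together with $0\in\partial(f+g\circ M)(x^\star)$. The feasibility part is immediate, as it is the third condition $Mx^\star-z^\star=0$ in \eqref{eq:critical}; so the whole task is to produce the subdifferential inclusion for $f+g\circ M$ from the first two conditions $-M^Ty^\star\in\partial f(x^\star)$ and $y^\star\in\subd g(z^\star)$.

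The key steps, in order, are as follows. First I would use the feasibility $Mx^\star=z^\star$ to rewrite the second condition as $y^\star\in\subd g(Mx^\star)$. Next I would apply the chain-rule inclusion \Cref{f:subdif}\Cref{f:subdif_chain}, which gives $M^T\subd g(Mx^\star)\subseteq\subd(g\circ M)(x^\star)$; combined with $y^\star\in\subd g(Mx^\star)$ this yields $M^Ty^\star\in\subd(g\circ M)(x^\star)$. Meanwhile the first condition gives $-M^Ty^\star\in\partial f(x^\star)$. Adding these two subgradients via the sum-rule inclusion \Cref{f:subdif}\Cref{f:subdif_sum} (applicable since $f$ and $g\circ M$ are lower semicontinuous) produces
\begin{equation*}
0=-M^Ty^\star+M^Ty^\star\in\partial f(x^\star)+\subd(g\circ M)(x^\star)\subseteq\subd(f+g\circ M)(x^\star).
\end{equation*}
Finally, since \cref{as:conv_param} and \cref{l:goM} guarantee that $f+g\circ M$ is convex, \Cref{f:subdif}\Cref{f:subdif_convex} lets me identify the Fr\'echet subdifferential with the convex subdifferential, so $0\in\partial(f+g\circ M)(x^\star)$, which is exactly the condition required by \Cref{rem:conv_sum}.

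The main obstacle, and the point where care is needed, is the bookkeeping between the convex subdifferential $\partial$ and the Fr\'echet subdifferential $\subd$. The chain and sum rules cited above are stated for the Fr\'echet subdifferential, whereas $\partial f(x^\star)$ in \eqref{eq:critical} is the convex subdifferential; I would smooth this over by invoking \Cref{f:subdif}\Cref{f:subdif_inclusion} to regard $-M^Ty^\star\in\partial f(x^\star)$ as a Fr\'echet subgradient before adding, and then use convexity of the composite at the very end to pass back to $\partial$. A secondary subtlety is ensuring the hypotheses of the inclusion rules are genuinely met — lower semicontinuity of $f$ and $g$ (hence of $g\circ M$) under \cref{as:conv_param} — but these follow directly from the standing assumption, so no real difficulty arises there.
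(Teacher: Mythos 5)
Your proposal is correct and follows essentially the same route as the paper's own proof: rewrite $y^\star\in\subd g(Mx^\star)$ via feasibility, push it through the chain rule \Cref{f:subdif}\Cref{f:subdif_chain}, add it to $-M^Ty^\star\in\partial f(x^\star)$ via the sum rule \Cref{f:subdif}\Cref{f:subdif_sum} to get $0\in\subd(f+g\circ M)(x^\star)$, and then pass back to $\partial(f+g\circ M)(x^\star)$ by convexity of the composite (\Cref{f:subdif}\Cref{f:subdif_convex}) so that \Cref{rem:conv_sum} applies. Your explicit handling of the $\partial$-versus-$\subd$ bookkeeping via \Cref{f:subdif}\Cref{f:subdif_inclusion} is a point the paper leaves implicit, but it is the same argument.
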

\begin{proof}
In view of \eqref{eq:critical}, and by recalling \Cref{f:subdif}\Cref{f:subdif_chain} and \Cref{f:subdif_sum}, we see that
\begin{align*}
0  = -M^Ty^\star+M^Ty^\star 
 & \in  \partial f(x^\star)+M^T\subd g(Mx^\star)\\
 & \subseteq \partial f(x^\star)+\subd(g\circ M)(x^\star)
\subseteq \subd(f+g\circ M)(x^\star).
\end{align*}
Consequently, by combining \Cref{rem:conv_sum} and \Cref{f:subdif}\Cref{f:subdif_convex}, we derive that $(x^\star,z^\star)$ solves \eqref{eq:P}.%
\end{proof}

We see that any critical point produces a solution of \eqref{eq:P}. The converse implication, however, requires a constraint qualification.

\begin{lemma}\label{l:qc-cp}
Suppose that \cref{as:conv_param} and one of the following assertions hold:
\begin{enumerate}[label={\rm(\roman*)},topsep=3pt]
\item\label{as:cp-qc1} $0\in \ri(\dom g - M(\dom f))$;
\item\label{as:cp-qc2} $\ri(\dom g)\cap \ri( M(\dom f))\neq\emptyset$;
\item\label{as:cp-qc3} $\inte(\dom g)\cap M(\dom f)\neq \emptyset\quad$ or $\quad(\dom g)\cap \inte(M(\dom f))\neq \emptyset$.
\end{enumerate}
Then the existence of critical points of $L_0$ is equivalent to the existence of solutions of \eqref{eq:P}.
\end{lemma}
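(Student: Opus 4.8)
The plan is to prove the nontrivial direction: assuming a solution $(x^\star,z^\star)$ of \eqref{eq:P} exists, produce a critical point $(x^\star,z^\star,y^\star)$ of $L_0$. The reverse implication is already settled by \Cref{lem:critpoint}, which shows any critical point yields a solution, so I need only supply the multiplier $y^\star$ under the constraint qualifications. By \Cref{rem:conv_sum}, a solution satisfies $Mx^\star=z^\star$ and the optimality condition $0\in\partial(f+g\circ M)(x^\star)$, where $f+g\circ M$ is convex under \cref{as:conv_param}. Comparing this with the target relations \eqref{eq:critical}, namely $-M^Ty^\star\in\partial f(x^\star)$ and $y^\star\in\subd g(z^\star)=\partial g(z^\star)$ (the last equality by \Cref{f:subdif}\Cref{f:subdif_convex}), I see that the heart of the matter is upgrading the single inclusion $0\in\partial(f+g\circ M)(x^\star)$ into the \emph{sum rule} $\partial(f+g\circ M)(x^\star)=\partial f(x^\star)+M^T\partial g(Mx^\star)$, which produces the splitting $0=u+M^Ty^\star$ with $u\in\partial f(x^\star)$ and $y^\star\in\partial g(Mx^\star)=\partial g(z^\star)$. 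Setting this $y^\star$ then gives exactly \eqref{eq:critical}.

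First I would record that, since all functions are convex under \cref{as:conv_param} (using \cref{l:goM} when $\beta<0$ so that $f+g\circ M$ is convex), the Fr\'echet subdifferential coincides with the convex subdifferential throughout, so I may work with $\partial$ and invoke standard convex-analytic sum and chain rules. The key tool is the subdifferential chain/sum rule for composition with a linear map: under an appropriate constraint qualification relating $\dom g$ and $M(\dom f)$, one has the exact formula $\partial(f+g\circ M)(x)=\partial f(x)+M^T\partial g(Mx)$. I would cite the relevant convex-analysis result (e.g.\ the Attouch--Br\'ezis type qualification, or \cite[Theorem~16.37 and Corollary~16.53]{BC17}), noting that condition \ref{as:cp-qc1}, $0\in\ri(\dom g-M(\dom f))$, is precisely the hypothesis under which this exact sum rule holds. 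Applying the rule at $x^\star$ to the inclusion $0\in\partial(f+g\circ M)(x^\star)$ yields the desired $y^\star$.

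The remaining work is to show that the three listed conditions \ref{as:cp-qc1}--\ref{as:cp-qc3} all suffice, which reduces to verifying they each imply the canonical qualification \ref{as:cp-qc1}. For \ref{as:cp-qc2}, I would use the standard relative-interior calculus: $0\in\ri(\dom g-M(\dom f))$ is equivalent to $\ri(\dom g)\cap\ri(M(\dom f))\neq\emptyset$, since $\ri(C-D)=\ri C-\ri D$ and $0\in\ri(C-D)\iff \ri C\cap\ri D\neq\emptyset$ (see, e.g., \cite[Proposition~6.19]{BC17} together with the fact that $\ri(M(\dom f))=M(\ri(\dom f))$). For \ref{as:cp-qc3}, I would observe that $\inte C\subseteq\ri C$ and that a nonempty intersection with the relative interior of the other set forces the relative-interior condition of \ref{as:cp-qc2}; the interior-based conditions are simply the more easily checkable sufficient conditions, so each implies \ref{as:cp-qc2} and hence \ref{as:cp-qc1}.

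I expect the main obstacle to be bookkeeping rather than depth: pinning down the precise convex-analytic statement of the chain rule for $g\circ M$ (including the fact that $\ri(M(\dom f))=M(\ri(\dom f))$) and carefully checking that each of \ref{as:cp-qc1}--\ref{as:cp-qc3} implies the qualification hypothesis of that rule. One subtlety worth flagging is that the cited sum/chain rules are classically stated for convex functions, so it is essential to have first established—via \cref{l:goM} and \Cref{rem:conv_sum}—that $f$, $g$, and $f+g\circ M$ are genuinely convex here despite the weakly convex data, which is exactly what \cref{as:conv_param} guarantees; without that reduction to the convex case, the exact sum rule would not be available.
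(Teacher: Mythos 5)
Your overall plan (reduce \ref{as:cp-qc2}/\ref{as:cp-qc3} to \ref{as:cp-qc1}, then split $0\in\partial(f+g\circ M)(x^\star)$ via an exact sum/chain rule to manufacture $y^\star$) is the right skeleton, and your handling of the relative-interior conditions is fine. But the central step has a genuine gap: you assert that under \cref{as:conv_param} ``all functions are convex'' and hence $\subd g=\partial g$, so that the convex sum/chain rule (Attouch--Br\'ezis, or \cite[Corollary~16.53]{BC17}) applies to the pair $(f,\,g\circ M)$. That is false in the setting of the lemma. \cref{as:conv_param} only requires $\alpha\geq 0$ and $\alpha+\beta\|M\|^2\geq 0$; when $\beta<0$ the function $g$ is merely weakly convex, $g\circ M$ is merely $\beta\|M\|^2$-convex, and only the \emph{sum} $f+g\circ M$ is convex --- this is exactly what \cref{l:goM} and \Cref{rem:conv_sum} give, and they do not make $g$ or $g\circ M$ convex individually. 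The hypotheses of the exact convex sum rule therefore fail for $(f,\,g\circ M)$, since both summands must be proper, l.s.c.\ and convex. Worse, the formula you want, $\partial(f+g\circ M)(x^\star)=\partial f(x^\star)+M^T\partial g(Mx^\star)$ with the \emph{convex} subdifferential of $g$, cannot hold in general: for a nonconvex weakly convex $g$ the set $\partial g(z)$ is typically empty, while the left-hand side is nonempty at a minimizer. The equality $\subd g(z^\star)=\partial g(z^\star)$ you invoke from \Cref{f:subdif}\Cref{f:subdif_convex} likewise requires convexity of $g$; note that \Cref{d:crit} deliberately uses $\subd g$, not $\partial g$, for exactly this reason.

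The missing idea --- and the way the paper closes this gap --- is the augmentation of \Cref{rem:reformulation}: set $\tilde f:=f+\frac{\beta}{2}\|M(\cdot)\|^2$ and $\tilde g:=g-\frac{\beta}{2}\|\cdot\|^2$, which \emph{are} proper, l.s.c.\ and convex under \cref{as:conv_param}, and satisfy $\dom\tilde f=\dom f$, $\dom\tilde g=\dom g$ and $\tilde f+\tilde g\circ M=f+g\circ M$. Condition \ref{as:cp-qc1} is then verbatim the constraint qualification for the convex pair $(\tilde f,\tilde g)$, so the convex chain rule \cite[Theorem~16.47]{BC17} applies to them; translating back with $\partial\tilde f=\partial f+\beta M^TM$ and $\partial\tilde g=\subd g-\beta\Id$ (via \Cref{f:subdif}\Cref{f:subdif_sumdif}) makes the quadratic terms cancel and yields
$\partial(f+g\circ M)=\partial f+M^T\circ\subd g\circ M$,
with the Fr\'echet subdifferential of $g$ appearing, exactly as \eqref{eq:critical} requires. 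With that identity in hand, your construction of $y^\star$ goes through unchanged. Your closing remark almost touches this issue, but it claims that \cref{as:conv_param} guarantees convexity of $g$ itself, which it does not; without the augmentation step the argument breaks down precisely in the weakly convex case $\beta<0$ that the lemma is designed to cover.
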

\begin{proof}
$(\Rightarrow)$: Follows from \cref{lem:critpoint}.

$(\Leftarrow)$: We note that either \ref{as:cp-qc2} or \ref{as:cp-qc3} implies \ref{as:cp-qc1} (see, e.g., \cite[Proposition 6.19]{BC17}). Set $\tilde{f}$ and $\tilde{g}$ to be the convex functions in \cref{rem:reformulation}. We observe that $\dom f=\dom \tilde{f}$, $\dom g=\dom\tilde{g}$, and that
\begin{equation*}
\partial \tilde{f} = \partial f +\beta M^TM
\quad,\quad
\partial \tilde{g} = \widehat{\partial} g -\beta\Id.
\end{equation*}
Since the constraint qualification $0\in\ri(\dom \tilde g - M(\dom \tilde f))$ is satisfied, by employing subdifferential calculus (see \cite[Theorem~16.47]{BC17}) we arrive at
\begin{align*}
\partial(f+g\circ M)&=\partial(\tilde{f}+\tilde{g}\circ M)\\
&=\partial \tilde{f} + M^T\circ\partial\tilde{g}\circ M\\
&=\partial f +\beta M^TM + M^T\circ\widehat{\partial} g\circ M
-\beta M^TM\\
&=\partial f + M^T\circ\widehat{\partial} g\circ M.
\end{align*}
Finally, by invoking \Cref{rem:conv_sum}, if $(x^\star,z^\star)$ is a solution of \eqref{eq:P}, then 
$$z^\star=Mx^\star \quad \text{and}\quad 0\in\partial(f+g\circ M)(x^\star)=\partial f(x^\star)+M^T\circ\widehat{\partial}g\circ M(x^\star).$$ 
Consequently, there exists $y^\star\in\R^m$ such that $(x^\star,z^\star,y^\star)$ is a critical point of $L_0$.%
\end{proof}

The notion of a critical point and the one of a saddle point coincide in the case where both functions $f$ and $g$ are convex. We now show that in the case where convexity is absent, saddle points are still critical points.

\begin{lemma}[critical points vs saddle points]
\label{l:cp-sp}
Let {$f:\R^n\to\left]-\infty,\infty\right]$} and {$g:\R^m\to\left]-\infty,\infty\right]$} be proper. Then, the saddle points of $L_0$ are also critical points. If, in addition, $f$ and $g$ are convex, then any critical point of $L_0$ is a saddle point.
\end{lemma}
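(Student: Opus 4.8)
The plan is to prove the two implications separately, working directly from the defining inequalities of saddle points and the subdifferential characterizations in \eqref{eq:critical}. Throughout I would exploit the separable structure of $L_0(x,z,y) = f(x) + g(z) + \langle y, Mx - z\rangle$, observing that as a function of $y$ it is affine, as a function of $x$ it is $f(x) + \langle M^Ty, x\rangle$ plus a constant, and as a function of $z$ it is $g(z) - \langle y, z\rangle$ plus a constant.

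\textbf{Saddle points are critical points.} Suppose $(x^\star,z^\star,y^\star)$ is a saddle point. First I would extract the constraint. The left inequality $L_0(x^\star,z^\star,y) \leq L_0(x^\star,z^\star,y^\star)$ for all $y$ reads $\langle y, Mx^\star - z^\star\rangle \leq \langle y^\star, Mx^\star - z^\star\rangle$ for all $y \in \R^m$; since this linear functional in $y$ is bounded above, its coefficient must vanish, giving $Mx^\star - z^\star = 0$. Next I would use the right inequality $L_0(x^\star,z^\star,y^\star) \leq L_0(x,z,y^\star)$, which says $(x^\star,z^\star)$ minimizes $(x,z) \mapsto f(x) + g(z) + \langle y^\star, Mx - z\rangle$ jointly, hence in each variable separately. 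Minimizing over $x$ (with $z=z^\star$) shows $x^\star$ minimizes $f(x) + \langle M^Ty^\star, x\rangle = f(x) + \langle y^\star, Mx\rangle$, so by \Cref{f:subdif}\Cref{f:subdif_min} we get $0 \in \subd\big(f + \langle M^T y^\star, \cdot\rangle\big)(x^\star)$, which by \Cref{f:subdif}\Cref{f:subdif_sumdif} (the linear term is differentiable) equals $\subd f(x^\star) + M^T y^\star$; thus $-M^Ty^\star \in \subd f(x^\star)$, and since $f$ need only be proper here I would record this membership in the Fr\'echet subdifferential, noting that \eqref{eq:critical} can be stated with $\subd f$ (or, in the convex-enough case, $\partial f$). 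Minimizing over $z$ (with $x=x^\star$) shows $z^\star$ minimizes $g(z) - \langle y^\star, z\rangle$, whence $y^\star \in \subd g(z^\star)$. Together with the constraint, these are exactly the critical point conditions \eqref{eq:critical}.

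\textbf{Under convexity, critical points are saddle points.} Now assume additionally that $f$ and $g$ are convex, and let $(x^\star,z^\star,y^\star)$ satisfy \eqref{eq:critical}. For convex functions \Cref{f:subdif}\Cref{f:subdif_convex} identifies $\subd g = \partial g$, so $-M^Ty^\star \in \partial f(x^\star)$ and $y^\star \in \partial g(z^\star)$. By the subgradient inequality, $-M^Ty^\star \in \partial f(x^\star)$ gives $f(x) \geq f(x^\star) + \langle -M^Ty^\star, x - x^\star\rangle$ for all $x$, and $y^\star \in \partial g(z^\star)$ gives $g(z) \geq g(z^\star) + \langle y^\star, z - z^\star\rangle$ for all $z$. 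Adding these and inserting the linear coupling term, I would check directly that $L_0(x,z,y^\star) \geq L_0(x^\star,z^\star,y^\star)$, which is the right saddle inequality; the constant and inner-product terms telescope precisely because of the constraint $Mx^\star = z^\star$. The left inequality is immediate: since $Mx^\star - z^\star = 0$, both $L_0(x^\star,z^\star,y)$ and $L_0(x^\star,z^\star,y^\star)$ equal $f(x^\star) + g(z^\star)$ regardless of $y$, so equality holds.

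The argument is essentially a bookkeeping exercise, so I do not expect a genuine obstacle; the only point demanding care is the separate-minimization step in the first implication. Joint minimality of $(x^\star,z^\star)$ trivially yields coordinatewise minimality, but I must be sure to apply the \emph{local} optimality criterion \Cref{f:subdif}\Cref{f:subdif_min} (valid without convexity) rather than a global-minimizer criterion, and to invoke the sum rule \Cref{f:subdif}\Cref{f:subdif_sumdif} correctly so that the affine term $\langle y^\star, M(\cdot)\rangle$ contributes its gradient $M^Ty^\star$ cleanly. Keeping the two subdifferential conventions straight — Fr\'echet in general, convex subdifferential once convexity is assumed — is the one place where a sloppy statement could introduce an error.
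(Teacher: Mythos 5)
Your overall route coincides with the paper's: extract $Mx^\star=z^\star$ from the left saddle inequality, exploit separability of the right inequality to get coordinatewise \emph{global} minimality, and, for the converse under convexity, add the two subgradient inequalities and use the constraint. The converse direction and the extraction of the constraint are correct as written. However, there is a genuine flaw in the first implication. The paper's \Cref{d:crit} requires $-M^Ty^\star\in\partial f(x^\star)$, i.e.\ membership in the \emph{convex} subdifferential, which the paper defines for any proper $f$ via the global inequality $f(y)\geq f(x^\star)+\langle u,y-x^\star\rangle$ for all $y$. Your argument instead applies the local optimality criterion \Cref{f:subdif}\Cref{f:subdif_min} together with the sum rule \Cref{f:subdif}\Cref{f:subdif_sumdif}, which only yields $-M^Ty^\star\in\subd f(x^\star)$. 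Since $\partial f(x^\star)\subseteq\subd f(x^\star)$ with strict inclusion possible for nonconvex $f$ (e.g.\ $f=-\|\cdot\|^2$ at the origin, where $\subd f(0)=\{0\}$ but $\partial f(0)=\emptyset$), this is a strictly weaker conclusion. Your suggestion that ``\eqref{eq:critical} can be stated with $\subd f$'' is not available: the definition is fixed by the paper, and the distinction is not cosmetic --- the convex subdifferential of $f$ is exactly what makes critical points correspond to zeros of $Q+S$ with $Q=(-M)\circ(\partial f)^{-1}\circ(-M^T)$ in \Cref{prop:zerQS}, which the convergence analysis relies on.

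The fix is immediate and is already contained in what you derived, which is why this is a wrong turn rather than a missing idea: the right saddle inequality with $z=z^\star$ says that $x^\star$ \emph{globally} minimizes $x\mapsto f(x)+\langle y^\star,Mx\rangle$, i.e.\ $f(x)\geq f(x^\star)+\langle -M^Ty^\star,\,x-x^\star\rangle$ for all $x\in\R^n$, and this is verbatim the definition of $-M^Ty^\star\in\partial f(x^\star)$; no optimality criterion or sum rule is needed. By routing through the local criterion and the Fr\'echet calculus you discarded precisely the globality that the saddle inequality provides. This direct reading-off is what the paper's proof does (it likewise obtains the stronger $y^\star\in\partial g(z^\star)\subseteq\subd g(z^\star)$ for $g$, although there the Fr\'echet membership you obtained does suffice for \eqref{eq:critical}).
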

\begin{proof}
Let $x^\star\in\dom(f),\ z^\star\in\dom(g)$. Then
\begin{subequations}
\begin{alignat}{4}
f(x^\star)+&&g(z^\star)+\langle y, Mx^\star-z^\star\rangle &\leq f(x^\star)+g(z^\star)+\langle y^\star,Mx^\star-z^\star\rangle,  \quad\forall y\in\R^m, \label{eq:sadp1}\\
\iff\quad&&
\langle y-y^\star, Mx^\star-z^\star\rangle &\leq 0, \quad \forall y\in\R^m,\\
\iff\quad && Mx^\star&=z^\star.
\end{alignat}
\end{subequations}
Moreover,
\begin{align}
f(x^\star)+g(z^\star)+\langle y^\star,Mx^\star-z^\star\rangle & \leq f(x)+g(z)+\langle y^\star, Mx-z\rangle,\label{eq:sadp2}
\end{align}
for all $(x,z)\in\R^n\times\R^m$, is equivalent to
\begin{align*}
f(x^\star)+\langle y^\star,Mx^\star\rangle & \leq f(x)+\langle y^\star,Mx\rangle, \quad \forall x\in\R^n,\\
\text{and} \qquad\quad g(z^\star)-\langle y^\star,z^\star\rangle & \leq g(z)-\langle y^\star,z\rangle, \quad \forall z\in\R^m,
\end{align*}
i.e.,
\begin{equation*}
-M^T y^\star\in\partial f(x^\star)
\quad\text{and}\quad y^\star\in\partial g(z^\star)\subseteq \widehat{\partial} g(z^\star).
\end{equation*}
Hence, if $(x^\star,z^\star,y^\star)$ is a saddle point, then $(x^\star,z^\star,y^\star)$ is a critical point of $L_0$.

Conversely, if, in addition, $f$ and $g$ are convex and if $(x^\star,z^\star,y^\star)$ is a critical point of $L_0$, then
\begin{equation*}
Mx^\star=z^\star\ ,\ -M^T y^\star\in\partial f(x^\star)
\quad\text{and}\quad y^\star\in\widehat{\partial} g(z^\star)=\partial g(z^\star),
\end{equation*}
which implies \eqref{eq:sadp1} and \eqref{eq:sadp2}, i.e., $(x^\star,z^\star,y^\star)$ is a saddle point of $L_0$.%
\end{proof}

In view of the relations between the critical points of $L_0$ and the solutions of \eqref{eq:P}, we impose the existence of a critical point in our convergence analysis.

\begin{assumption}\label{as:critpoint}
The Lagrangian $L_0$ has a critical point.
\end{assumption}

\begin{remark}\label{r:assumptions_cp} \Cref{as:critpoint} is standard in the analysis of the ADMM and its variants in the convex framework. In this case \Cref{l:cp-sp} implies that it is equivalent to the existence of saddle points, which is assumed in several classical studies such as~\cite{BPCPE11,EY15,FG83}.

Other authors obtain the existence of critical/saddle points from the nonemptyness of the solution set of \eqref{eq:P} when combined with one of the constraint qualifications in \Cref{l:qc-cp}. For instance, the \emph{Slater constraint qualification} in \Cref{l:qc-cp}\ref{as:cp-qc2} is used in \cite{CST17,CP11} while \cite{BotCset19,MZ19} incorporate the assumption in \Cref{l:qc-cp}\ref{as:cp-qc1}.
\end{remark}

We now relate the critical points of the Lagrangian to the zeros of $Q+S$, where $Q$ and $S$ are the operators defined by
\begin{subequations}\label{e:QS}
\begin{align}
Q:\R^m\tto\R^m: y\mapsto &\left\{ -Mx : -M^Ty \in \partial f(x)\right\}
=(-M)\circ(\partial f)^{-1}\circ (-M^T)(y),\label{e:Q}\\
S:\R^m\tto\R^m: y\mapsto &\left\{ z : y \in \subd g(z)\right\}=(\subd g)^{-1}(y).\label{e:S}
\end{align}
\end{subequations}
We will address the convergence of our aADMM by applying the adaptive DR algorithm~\cite{DPadapt} to $Q$ and $S$. This is a natural extension of the classical relation between the ADMM and the DR algorithm in the convex case (see, e.g., \cite{EY15,MZ19}) to our generalized settings.

\begin{proposition}\label{prop:zerQS}
The Lagrangian $L_0$ has a critical point if and only if $\zer(Q+S)\neq\emptyset$, where $Q$ and $S$ are the operators defined in \eqref{e:QS}. More precisely, $y^\star\in\zer(Q+S)$ if and only if there exist $x^\star\in\R^n$ and $z^\star\in\R^m$ such that $(x^\star,z^\star,y^\star)$ is a critical point of $L_0$.\label{prop:zerQS_II}
\end{proposition}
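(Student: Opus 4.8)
The plan is to establish the more precise second assertion—the pointwise correspondence between elements $y^\star\in\zer(Q+S)$ and critical points $(x^\star,z^\star,y^\star)$—by directly unravelling the definitions of the set-valued operators $Q$ and $S$. The first, weaker claim (that $L_0$ has a critical point if and only if $\zer(Q+S)\neq\emptyset$) then follows immediately by quantifying existentially over $x^\star$ and $z^\star$ on both sides of the equivalence. No auxiliary machinery is needed here; the entire argument is a careful bookkeeping of memberships in graphs and inverses.

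First I would observe that, by the definition of the sum of set-valued operators, $0\in(Q+S)(y^\star)$ holds precisely when there exist $q\in Q(y^\star)$ and $s\in S(y^\star)$ with $q+s=0$. I would then unpack each membership separately. By \eqref{e:Q}, $q\in Q(y^\star)$ is equivalent to the existence of some $x^\star\in\R^n$ satisfying $-M^Ty^\star\in\partial f(x^\star)$ together with $q=-Mx^\star$. By \eqref{e:S}, since $S=(\subd g)^{-1}$, the membership $s\in S(y^\star)$ is equivalent to $y^\star\in\subd g(s)$; setting $z^\star:=s$ rephrases this as $y^\star\in\subd g(z^\star)$. Substituting these expressions into the relation $q+s=0$ yields $-Mx^\star+z^\star=0$, i.e. $Mx^\star=z^\star$.

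Collecting the three resulting relations, namely
\begin{equation*}
-M^Ty^\star\in\partial f(x^\star),\qquad y^\star\in\subd g(z^\star),\qquad Mx^\star-z^\star=0,
\end{equation*}
I recover exactly the critical point conditions \eqref{eq:critical} of \Cref{d:crit}. Since every step in this chain is a genuine equivalence, reading it backwards shows that any critical point $(x^\star,z^\star,y^\star)$ of $L_0$ produces, via $q:=-Mx^\star\in Q(y^\star)$ and $s:=z^\star\in S(y^\star)$, a certificate that $0\in(Q+S)(y^\star)$. This completes the pointwise correspondence, and quantifying over $x^\star,z^\star$ delivers the stated existence equivalence.

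Since the proof is purely definitional, I do not expect a substantive obstacle; the only points requiring care are the two sign reversals $-M$ and $-M^T$ appearing in the composition defining $Q$ (which must be tracked consistently so that the slack relation becomes $Mx^\star=z^\star$ rather than $Mx^\star=-z^\star$), and the identification of the slack variable $s$ of $S$ with the primal variable $z^\star$ through the inverse $(\subd g)^{-1}$. As long as these matchings are performed transparently, the equivalence is immediate.
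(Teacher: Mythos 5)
Your proposal is correct and follows essentially the same route as the paper's proof: both arguments unpack $0\in(Q+S)(y^\star)$ into the existence of $z^\star\in S(y^\star)$ with $-z^\star\in Q(y^\star)$ (your $q,s$ with $q+s=0$) and then translate these memberships, via the definitions \eqref{e:QS}, into exactly the critical point conditions \eqref{eq:critical}. The sign bookkeeping you flag is handled identically in the paper.
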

\begin{proof}
We observe that $y^\star\in\zer(Q+S)$ if and only if there exists $z^\star\in\R^m$ such that
\begin{equation}\label{e:zer Q and S}
z^\star\in S(y^\star) \quad\text{and}\quad -z^\star\in Q(y^\star).
\end{equation}
The definition of $Q$ and $S$ implies that \eqref{e:zer Q and S} is equivalent to $y^\star\in\subd g(z^\star)$ and the existence of $x^\star\in\R^n$ such that $z^\star=Mx^\star$ and $-M^Ty^\star\in\partial f(x^\star)$, that is, $(x^\star,z^\star,y^\star)$ is a critical point of~$L_0$.%
\end{proof}

\subsection{The Algorithm}

We now formulate our adaptive version of the Alternating Direction Method of Multipliers, \emph{aADMM} for short. The steps of the aADMM are analogous to the steps of the classical ADMM in \eqref{alg0}, however, the aADMM is admissible in the strongly-weakly convex setting and it accommodates different penalty parameters in the two minimization steps. Specifically, we set an initial point $(x^0,z^0,y^0)\in\R^n\times\R^m\times\R^m$ and two parameters $\gamma,\delta>0$. Then the aADMM iterates according to the recurrences
\begin{subequations}\label{alg}
\begin{align}
x^{k+1}&=\argmin_{x\in \R^n}L_\gamma(x,z^k,y^k),
%=\argmin_{x\in \R^n}\left\{f(x)+g(z^k)+\langle y^k, Mx-z^k\rangle + \frac{\gamma}{2}\|Mx-z^k\|^2\right\},
\label{alg:x}\\
z^{k+1}&=\argmin_{z\in \R^m}L_\delta(x^{k+1},z,y^k),
%=\argmin_{z\in \R^m}\left\{f(x^{k+1})+g(z)+\langle y^k, Mx^{k+1}-z\rangle + \frac{\delta}{2}\|Mx^{k+1}-z\|^2\right\},
\label{alg:z}\\
y^{k+1}&=y^k+\delta (Mx^{k+1}-z^{k+1}),\label{alg:lm}
\end{align}
\end{subequations}
equivalently,
\begin{subequations}\label{alg2}
\begin{align}
x^{k+1}&=\argmin_{x\in \R^n}\left\{f(x) + \frac{\gamma}{2}\Big\|Mx-z^k+\frac{y^k}{\gamma}\Big\|^2\right\},\label{alg2:x}\\
z^{k+1}&=\argmin_{z\in \R^m}\left\{g(z)+ \frac{\delta}{2}\Big\|Mx^{k+1}-z+\frac{y^k}{\delta}\Big\|^2\right\},\label{alg2:z}\\
y^{k+1}&=y^k+\delta (Mx^{k+1}-z^{k+1}).\label{alg2:lm}
\end{align}
\end{subequations}

Clearly, by letting $\gamma=\delta$, we obtain the steps of the original ADMM. Similar to the ADMM, the aADMM is only valid if the $x^k$-step in \eqref{alg:x}  and the $z^k$-step in \eqref{alg:z} (equivalently, \eqref{alg2:x} and \eqref{alg2:z}, respectively) are well defined. We will examine this issue in relation to the operators $Q$ and $S$ in \eqref{e:QS}. Our next argument follows the footsteps of \cite{Gab83} (see also \cite{EY15}). It forms a foundation for convergence analysis of the aADMM by providing a sufficient condition for the existence of the $z$-update via the maximal comonotonicity of $S$.

\begin{lemma}[existence of the $z$-update]
\label{l:zk_exists}
Let {$g:\R^m\to\left]-\infty,\infty\right]$} be proper, $\beta$-convex and lower semicontinuous. Let $x^{k+1}\in\R^n$, $y^k\in\R^m$ and $\delta>\max\{0,-\beta\}$. Then the operator $S$ defined by \eqref{e:S} is maximally $\beta$-comonotone. Consequently, $J_{\delta S}$ is single-valued with full domain and $z^{k+1}$ defined in \eqref{alg:z} is uniquely determined by
\begin{equation*}
%y^{k+1}:=y^k+\delta(Mx^{k+1}-z^{k+1})= J_{\delta S}(y^k+\delta Mx^{k+1}),
%%J_{\delta S}(y^{k+1}-\delta z^{k+1}),
z^{k+1}=\frac{1}{\delta}\big(\Id-J_{\delta S}\big)(y^k+\delta Mx^{k+1}),
\end{equation*}
and
\begin{equation*}
y^{k+1}=y^k+\delta(Mx^{k+1}-z^{k+1})= J_{\delta S}(y^k+\delta Mx^{k+1}).
\end{equation*}
\end{lemma}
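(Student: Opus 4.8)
The plan is to establish the three claims in sequence: first the maximal $\beta$-comonotonicity of $S$, then use it to obtain single-valuedness and full domain of $J_{\delta S}$ via the recalled resolvent fact, and finally verify the closed-form expressions for $z^{k+1}$ and $y^{k+1}$ by a direct computation using the optimality condition of the $z$-update.

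For the first claim, recall from \eqref{e:S} that $S=(\subd g)^{-1}$. Since $g$ is proper, lower semicontinuous and $\beta$-convex, \Cref{f:subd_maxmon} tells us that $\subd g$ is maximally $\beta$-monotone. By the very definition of $\beta$-comonotonicity (\Cref{def:monotone}\ref{def:monotone_comono}), an operator is $\beta$-comonotone precisely when its inverse is $\beta$-monotone; moreover \Cref{f:maxcom}\ref{f:maxcom_I} records that maximal $\beta$-comonotonicity of $S$ is equivalent to maximal $\beta$-monotonicity of $S^{-1}=\subd g$. Hence $S$ is maximally $\beta$-comonotone. For the second claim, since $\delta>\max\{0,-\beta\}$ we have both $\delta>0$ and $\delta>-\beta$, so \Cref{f:resolvent_comono} applies with $A=S$ and $\alpha=\beta$: part \ref{f:resolvent_comono_I} gives that $J_{\delta S}$ is single-valued (indeed conically averaged), and part \ref{f:resolvent_comono_II} combined with maximal $\beta$-comonotonicity gives $\dom J_{\delta S}=\R^m$.

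For the third claim I would argue from the first-order optimality condition of \eqref{alg:z}. Minimizing $z\mapsto g(z)+\frac{\delta}{2}\|Mx^{k+1}-z+\tfrac{y^k}{\delta}\|^2$ (using the form \eqref{alg2:z}) yields, via \Cref{f:subdif}\Cref{f:subdif_min} together with \Cref{f:subdif}\Cref{f:subdif_sumdif} (the quadratic term being differentiable), the inclusion
\begin{equation*}
0\in\subd g(z^{k+1})-\delta\Big(Mx^{k+1}-z^{k+1}+\tfrac{y^k}{\delta}\Big)
=\subd g(z^{k+1})-\big(y^k+\delta Mx^{k+1}-\delta z^{k+1}\big).
\end{equation*}
Writing $w:=y^k+\delta Mx^{k+1}$, this rearranges to $w\in \delta z^{k+1}+\subd g(z^{k+1})$, equivalently $w-\delta z^{k+1}\in\subd g(z^{k+1})$, which by the definition of $S$ means $z^{k+1}\in S(w-\delta z^{k+1})$. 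Setting $u:=w-\delta z^{k+1}$ gives $w=u+\delta S$-element at $u$, i.e. $w\in(\Id+\delta S)(u)$, so $u=J_{\delta S}(w)$ (well defined and single-valued by the second claim). Substituting back, $\delta z^{k+1}=w-J_{\delta S}(w)$, which is the asserted formula $z^{k+1}=\frac1\delta(\Id-J_{\delta S})(y^k+\delta Mx^{k+1})$; and then $y^{k+1}=y^k+\delta Mx^{k+1}-\delta z^{k+1}=w-(w-J_{\delta S}(w))=J_{\delta S}(w)$, as claimed.

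The main obstacle is not conceptual but bookkeeping: one must ensure the optimality condition is stated with the correct subdifferential and sign, and confirm that $z^{k+1}$ is genuinely well defined as a \emph{unique} minimizer rather than merely a critical point. The single-valuedness and full domain of $J_{\delta S}$ from the second claim is exactly what resolves this — it guarantees the inclusion $w\in(\Id+\delta S)(z^{k+1})$ has a unique solution — so the care needed is to invoke \Cref{f:resolvent_comono} before asserting uniqueness, and to track the substitution $w=y^k+\delta Mx^{k+1}$ cleanly through the algebra.
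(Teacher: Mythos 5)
Your treatment of the first two claims matches the paper's proof exactly: maximal $\beta$-comonotonicity of $S$ via \Cref{f:subd_maxmon} and \Cref{f:maxcom}\ref{f:maxcom_I}, then single-valuedness and full domain of $J_{\delta S}$ via \Cref{f:resolvent_comono}. The gap is in the third claim. You derive the formula only in the forward direction: \emph{if} $z^{k+1}$ minimizes, then by the necessary condition \Cref{f:subdif}\Cref{f:subdif_min} it satisfies the stationarity inclusion and hence equals $\frac{1}{\delta}\big(\Id-J_{\delta S}\big)(y^k+\delta Mx^{k+1})$. That proves uniqueness \emph{conditional on existence}, but the lemma --- titled ``existence of the $z$-update'' --- also asserts that the argmin in \eqref{alg:z} is attained. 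Your closing remark that single-valuedness and full domain of $J_{\delta S}$ resolve the well-definedness is not correct: those properties only guarantee that the inclusion $w\in(\Id+\delta S)(u)$ has a unique solution $u$ (incidentally, this inclusion lives at $u=y^{k+1}$, not at $z^{k+1}$ as written in your last paragraph), i.e., they pin down the unique \emph{candidate}; they do not show that the corresponding point $z^{k+1}=\frac{1}{\delta}(w-u)$ actually minimizes, nor that a minimizer exists at all. For a nonconvex objective a stationary point need not be a minimizer, and an argmin can be empty.

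What closes the gap --- and what the paper's proof does --- is the observation that since $g$ is $\beta$-convex and the quadratic penalty is $\delta$-convex, the function inside the argmin in \eqref{alg:z} is $(\beta+\delta)$-convex, hence convex, because $\delta>-\beta$. This upgrades your one-way implication to a chain of equivalences: by \Cref{f:subdif}\Cref{f:subdif_minc} (together with \Cref{f:subdif}\Cref{f:subdif_convex} and \Cref{f:subdif}\Cref{f:subdif_sumdif}), $z^{k+1}$ solves \eqref{alg:z} \emph{if and only if} $0\in\subd g(z^{k+1})-y^k-\delta(Mx^{k+1}-z^{k+1})$, if and only if $y^{k+1}=J_{\delta S}(y^k+\delta Mx^{k+1})$. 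Reading the equivalences backwards, full domain of $J_{\delta S}$ then yields existence of the $z$-update and single-valuedness yields uniqueness. So your argument needs this convexity observation (which uses the hypothesis $\delta>-\beta$ a second time, beyond its use for the resolvent) and the replacement of the necessary optimality condition by the equivalence; otherwise it follows the paper's route.
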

\begin{proof}
Since $g$ is $\beta$-convex, \Cref{f:subd_maxmon} implies that $\subd g$ is maximally $\beta$-monotone. Consequently, it follows from \Cref{f:maxcom}\Cref{f:maxcom_I} that $S=(\subd g)^{-1}$ is maximally $\beta$-comonotone.

Now, since $\delta>-\beta$, \cref{f:resolvent_comono} implies that $J_{\delta S}$ is single-valued and has full domain. Furthermore, since $g$ is $\beta$-convex and $\delta>-\beta$, the function inside the argmin in \eqref{alg:z} is convex. By employing \cref{f:subdif}~\ref{f:subdif_minc} and then \ref{f:subdif_sumdif}, we see that
\begingroup
\allowdisplaybreaks
\begin{align*}
z^{k+1}\text{ satisfies }\eqref{alg:z}
\quad\iff\quad & 0\in\subd g(z^{k+1})-y^k-\delta(Mx^{k+1}-z^{k+1})\\
\iff\quad & y^{k+1}\in \subd g(z^{k+1})\\
\iff\quad & z^{k+1}\in S(y^{k+1})\\
\iff\quad & y^{k+1}+\delta z^{k+1}\in (\Id+\delta S)(y^{k+1})\\
\iff\quad & y^{k+1}= J_{\delta S}(y^{k+1}+\delta z^{k+1}) =J_{\delta S}(y^k+\delta Mx^{k+1})%)=J_{\delta S}(v^k+\cl \gamma z^k).
\end{align*}
\endgroup
which completes the proof.%
\end{proof}

We now provide a general condition for the existence of the $x$-update. We dedicate \Cref{sec:qcxup} to a detailed  discussion of cases where this condition is satisfied.

\begin{lemma}\label{l:Qcomono}
Let $M\in\R^{m\times n}$ be nonzero and let {$f:\R^n\to\left]-\infty,\infty\right]$} be proper, lower semicontinuous and $\alpha$-convex, where $\alpha\in\R_+$. Then the operator $Q$ defined in \eqref{e:Q} is $\frac{\alpha}{\|M\|^2}$-comonotone.
\end{lemma}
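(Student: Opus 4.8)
The plan is to unwind the definition of $Q$ and reduce the claimed $\frac{\alpha}{\|M\|^2}$-comonotonicity to the $\alpha$-monotonicity of $\partial f$. Recall that $Q = (-M)\circ(\partial f)^{-1}\circ(-M^T)$, so a pair $(y,u)\in\gra Q$ means $u = -Mx$ for some $x$ with $-M^Ty\in\partial f(x)$. First I would take two pairs $(y_1,u_1),(y_2,u_2)\in\gra Q$, producing points $x_1,x_2$ with $u_i=-Mx_i$ and $-M^Ty_i\in\partial f(x_i)$. The goal is to establish
\begin{equation*}
\langle y_1-y_2,\,u_1-u_2\rangle \geq \frac{\alpha}{\|M\|^2}\|u_1-u_2\|^2.
\end{equation*}

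The key computation is to rewrite the left-hand side in terms of the $x_i$. Since $u_i=-Mx_i$ we have
\begin{equation*}
\langle y_1-y_2,\,u_1-u_2\rangle
= \langle y_1-y_2,\,-M(x_1-x_2)\rangle
= \langle -M^T(y_1-y_2),\,x_1-x_2\rangle.
\end{equation*}
Now $-M^Ty_i\in\partial f(x_i)$, and by \Cref{f:subd_maxmon} (with \Cref{f:subdif}\Cref{f:subdif_convex}, since $\alpha\ge 0$ makes $f$ convex) the subdifferential $\partial f$ is (maximally) $\alpha$-monotone, so the last inner product is at least $\alpha\|x_1-x_2\|^2$.

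The final step, which I expect to be the only substantive point, is to control $\|x_1-x_2\|^2$ from below by $\|u_1-u_2\|^2=\|M(x_1-x_2)\|^2$. This is exactly where the matrix norm enters: the definition of the $2$-norm gives $\|M(x_1-x_2)\|\leq\|M\|\,\|x_1-x_2\|$, hence $\|x_1-x_2\|^2\ge \frac{1}{\|M\|^2}\|M(x_1-x_2)\|^2=\frac{1}{\|M\|^2}\|u_1-u_2\|^2$. Since $M$ is nonzero, $\|M\|>0$ and this is well defined. Combining the three displays yields
\begin{equation*}
\langle y_1-y_2,\,u_1-u_2\rangle \geq \alpha\|x_1-x_2\|^2 \geq \frac{\alpha}{\|M\|^2}\|u_1-u_2\|^2,
\end{equation*}
which is precisely $\alpha/\|M\|^2$-comonotonicity of $Q$. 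The one subtlety worth flagging is that the bound $\|x_1-x_2\|^2\ge\frac{1}{\|M\|^2}\|u_1-u_2\|^2$ points the inequality in the right direction only because $\alpha\ge 0$; if $\alpha$ were negative this estimate would be useless, which is consistent with the hypothesis $\alpha\in\R_+$ and explains why the non-symmetric roles of $f$ and $g$ noted earlier in the paper are essential here.
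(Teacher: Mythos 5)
Your proposal is correct and follows essentially the same argument as the paper: unwind $\gra Q$ into pairs $(x_i,-M^Ty_i)\in\gra\partial f$, apply the $\alpha$-monotonicity of $\partial f$ from \Cref{f:subd_maxmon}, and pass from $\|x_1-x_2\|^2$ to $\frac{1}{\|M\|^2}\|M(x_1-x_2)\|^2$ via the operator-norm bound, which requires $\alpha\geq 0$. Your closing remark on why $\alpha\in\R_+$ is indispensable (and your explicit use of \Cref{f:subdif}\Cref{f:subdif_convex} to identify the convex and Fr\'echet subdifferentials) is a correct observation that the paper leaves implicit.
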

\begin{proof}
Let $(y_1,-Mx_1),(y_2,-Mx_2)\in\gra Q$. Then $(x_1,-M^Ty_1),(x_2,-M^Ty_2)\in\gra \partial f$. Since $f$ is $\alpha$-convex, \Cref{f:subd_maxmon} implies that $\partial f$ is $\alpha$-monotone. Consequently,
\begin{equation*}
\langle x_1-x_2, -M^T(y_1-y_2)\rangle \geq \alpha\|x_1-x_2\|^2
\end{equation*}
which implies that
\begin{equation*}
\langle -M(x_1-x_2), y_1-y_2\rangle \geq \frac{\alpha}{\|M\|^2}\|M\|^2\|x_1-x_2\|^2\geq\frac{\alpha}{\|M\|^2}\|M(x_1-x_2)\|^2.
\end{equation*}
Thus, $Q$ is $\frac{\alpha}{\|M\|^2}$-comonotone.%
\end{proof}

\begin{lemma}[conditions for the existence of the $x$-update]
\label{l:xk_exists}
Let {$f:\R^n\to\left]-\infty,\infty\right]$} be proper, convex and lower semicontinuous. Let $M\in\R^{m\times n}$ be nonzero. Let $y^k,z^k\in\R^m$ and $\gamma>0$. Then the following assertions are equivalent.
\begin{enumerate}
\item\label{l:xk_exists-i} $x^{k+1}$ satisfies \eqref{alg:x};
\item\label{l:xk_exists-ii} $x^{k+1}\in(\partial f)^{-1}(-M^Tv^k)$, where $v^k=y^k+\gamma(Mx^{k+1}-z^k)$;
\item\label{l:xk_exists-iii} $v^k= J_{\gamma Q}(y^k-\gamma z^k)$, where $Q$ is defined by \eqref{e:Q}.
\end{enumerate}
Consequently, $x^{k+1}$ in \eqref{alg:x} exists for all $(y^k,z^k)$ if and only if $J_{\gamma Q}$ has full domain. 
\end{lemma}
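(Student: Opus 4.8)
The plan is to establish the equivalences by a direct chain of manipulations of the optimality condition for the $x$-update, mirroring the argument already carried out for the $z$-update in \Cref{l:zk_exists}. Since $f$ is convex here, the objective inside the argmin in \eqref{alg:x} (equivalently \eqref{alg2:x}) is convex in $x$, so $x^{k+1}$ is a minimizer if and only if $0$ lies in its subdifferential; this is where \Cref{f:subdif}\Cref{f:subdif_minc} enters. Expanding $L_\gamma(x,z^k,y^k)$ and differentiating the smooth quadratic penalty term via \Cref{f:subdif}\Cref{f:subdif_sumdif} gives the first-order condition $0\in\partial f(x^{k+1})+M^T\big(y^k+\gamma(Mx^{k+1}-z^k)\big)$.

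First I would show \Cref{l:xk_exists-i} $\Leftrightarrow$ \Cref{l:xk_exists-ii}. Introducing the shorthand $v^k=y^k+\gamma(Mx^{k+1}-z^k)$, the first-order condition reads $-M^Tv^k\in\partial f(x^{k+1})$, which is exactly $x^{k+1}\in(\partial f)^{-1}(-M^Tv^k)$. Next, for \Cref{l:xk_exists-ii} $\Leftrightarrow$ \Cref{l:xk_exists-iii}, I would unwind the definition of $Q$ in \eqref{e:Q}. From $x^{k+1}\in(\partial f)^{-1}(-M^Tv^k)$ we get $-Mx^{k+1}\in Q(v^k)$ directly from \eqref{e:Q}. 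The key algebraic step is then to rewrite $v^k=y^k+\gamma Mx^{k+1}-\gamma z^k$ as $y^k-\gamma z^k = v^k - \gamma Mx^{k+1} \in v^k+\gamma Q(v^k) = (\Id+\gamma Q)(v^k)$, which is precisely $v^k=J_{\gamma Q}(y^k-\gamma z^k)$ once we know $J_{\gamma Q}$ is single-valued. Each of these implications is reversible, so the three assertions are equivalent.

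I anticipate the main subtlety to be justifying that $J_{\gamma Q}$ is single-valued so that the statement $v^k=J_{\gamma Q}(y^k-\gamma z^k)$ is well posed (rather than merely $v^k\in J_{\gamma Q}(y^k-\gamma z^k)$). This is where I would invoke \Cref{l:Qcomono}: with $\alpha=0$ (since $f$ is assumed convex), $Q$ is $0$-comonotone, i.e.\ monotone, and for any $\gamma>0$ we have $\gamma>-\alpha=0$, so \Cref{f:resolvent_comono}\Cref{f:resolvent_comono_I} guarantees that $J_{\gamma Q}$ is single-valued. Care is needed here because $Q$ need not be \emph{maximally} comonotone, so $J_{\gamma Q}$ may fail to have full domain; single-valuedness on its domain is all that is required for the equivalence of the three assertions, and this distinction is exactly what powers the final sentence.

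For the concluding claim, the existence of $x^{k+1}$ for \emph{all} pairs $(y^k,z^k)$ is equivalent to $J_{\gamma Q}$ having full domain. I would argue this by noting that the point $y^k-\gamma z^k$ ranges over all of $\R^m$ as $(y^k,z^k)$ ranges over $\R^m\times\R^m$ (indeed, fixing any $z^k$ and letting $y^k$ vary already sweeps out $\R^m$). By the equivalence \Cref{l:xk_exists-i} $\Leftrightarrow$ \Cref{l:xk_exists-iii}, an $x^{k+1}$ satisfying \eqref{alg:x} exists for a given $(y^k,z^k)$ if and only if $y^k-\gamma z^k\in\dom J_{\gamma Q}$; requiring this for every $(y^k,z^k)$ is therefore equivalent to $\dom J_{\gamma Q}=\R^m$, completing the proof.
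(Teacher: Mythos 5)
Your proof is correct and takes essentially the same route as the paper's: it derives the first-order optimality condition via \Cref{f:subdif}~\ref{f:subdif_minc} and~\ref{f:subdif_sumdif}, introduces $v^k=y^k+\gamma(Mx^{k+1}-z^k)$, unwinds the definition of $Q$ in \eqref{e:Q} to pass between the inclusions, and justifies the single-valuedness of $J_{\gamma Q}$ by combining \Cref{l:Qcomono} (with $\alpha=0$) and \Cref{f:resolvent_comono}\ref{f:resolvent_comono_I} --- exactly the ingredients of the paper's chain of equivalences. Your explicit observation that only single-valuedness of $J_{\gamma Q}$ on its domain (not full domain, which would require maximal comonotonicity) is needed for the equivalences, with full domain reserved for the final existence claim, is a point the paper leaves implicit.
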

\begin{proof}
By invoking \Cref{f:subdif}~\ref{f:subdif_minc} and~\ref{f:subdif_sumdif}, we see that
\begin{align}
&\text{$x^{k+1}$ satisfies \eqref{alg:x}}~~\nonumber\\
\iff~~& 0\in\partial f(x^{k+1})+M^Ty^k+\gamma M^T(Mx^{k+1}-z^k)\nonumber\\
\iff~~& -M^Tv^k\in \partial f(x^{k+1})
    \text{~~where~~}v^k=y^k+\gamma(Mx^{k+1}-z^k)\nonumber\\
\iff~~& x^{k+1}\in(\partial f)^{-1}(-M^Tv^k)\text{~~where~~}
    y^k-\gamma z^k-v^k=-\gamma Mx^{k+1}\nonumber\\
\iff~~& y^k-\gamma z^k -v^k\in -\gamma M\circ (\partial f)^{-1}\circ (-M^T)(v^k)\nonumber\\
\iff~~& y^k-\gamma z^k \in (\Id +\gamma Q)(v^k)\nonumber\\
\iff~~&v^k = J_{\gamma Q}(y^k-\gamma z^k).\label{eq:JgammaQ single valued}
\end{align}
In~\eqref{eq:JgammaQ single valued} we employed the single-valuedness of $J_{\gamma Q}$ which follows from \Cref{l:Qcomono} when combined with \Cref{f:resolvent_comono}\ref{f:resolvent_comono_I}.%
\end{proof}

\subsection{Constraint Qualifications for the $x$-update}\label{sec:qcxup}

While the $z$-update is already {well defined} under a generalized convexity assumption (by \cref{l:zk_exists}), the $x$-update depends on the resolvent of $Q$ having full domain (by \cref{l:xk_exists}). We now discuss constraint qualifications for the maximal comonotonicity of $Q$, which, in turn, guarantees full domain of its resolvent. To this end we assume that

\begin{assumption}\label{as:QC}
The following constraint qualification holds:
\begin{equation}\label{eq:QC}
0\in\ri(\dom f^* - \ran M^T).
\end{equation}
\end{assumption}

\cref{as:QC} is satisfied in several cases which we detail in the following lemma. Some of the historical context and references to  such cases in the classical ADMM literature are provided in \cref{rem:QC history}. 

\begin{lemma}[sufficient conditions for \cref{as:QC}]
\label{l:QCequiv}
Let $f:\R^n\to\left]-\infty,+\infty\right]$ be proper, convex and lower semicontinuous. Let $M\in\R^{m\times n}$. Then each of the following conditions implies~\eqref{eq:QC}:
\begin{enumerate}
\item\label{l:QCequiv-i} $\ri(\dom f^*)\cap \ran M^T \neq\varnothing$.
\item\label{l:QCequiv-ii} $\ri(\ran \partial f)\cap \ran M^T \neq\varnothing$.
\item\label{l:QCequiv-iii} $(\rec f)(x)>0$ for all $x\in \ker M\setminus\{x\in\R^n : -(\rec f)(-x)=(\rec f)(x)=0\}.$
\item\label{l:QCequiv-iv} $f$ is coercive (in particular, supercoercive).
\item\label{l:QCequiv-ivb} $f$ is strongly convex.
\item\label{l:QCequiv-v} $M^TM$ is invertible.
\end{enumerate}
\end{lemma}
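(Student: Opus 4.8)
The goal is to show that each of the six conditions implies the constraint qualification $0\in\ri(\dom f^*-\ran M^T)$. The natural strategy is to reduce the later conditions to the earlier ones, forming a chain of implications, and to prove only the genuinely new reductions directly. The anchor is condition \ref{l:QCequiv-i}, which is almost a restatement of \eqref{eq:QC}: if $\ri(\dom f^*)\cap\ran M^T\neq\varnothing$, then picking $w$ in the intersection gives $0=w-w\in\ri(\dom f^*)-\ran M^T\subseteq\ri(\dom f^*-\ran M^T)$, where the last inclusion uses that $\ran M^T$ is a subspace (hence relatively open and equal to its own relative interior) together with the standard calculus rule $\ri(A-B)=\ri A-\ri B$ for convex sets (see \cite[Proposition~6.19 or Corollary~6.15]{BC17}).

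The plan is then to route the remaining conditions into \ref{l:QCequiv-i}. Condition \ref{l:QCequiv-ii} should follow from \ref{l:QCequiv-i} after establishing that $\ri(\ran\partial f)$ and $\ri(\dom f^*)$ coincide, or at least that the former is contained in the latter; this rests on the standard convex-analytic fact $\ran\partial f\subseteq\dom f^*\subseteq\cl(\ran\partial f)$, so that the two sets have the same relative interior (same closure and convexity force equal relative interiors). For the coercivity conditions \ref{l:QCequiv-iii} and \ref{l:QCequiv-iv}, the cleanest route is through $\dom f^*$: coercivity of $f$ is equivalent to $0\in\inte(\dom f^*)$, and more generally the recession condition in \ref{l:QCequiv-iii} is the precise characterization (via \cite[Proposition~14.16 or Theorem~11.8]{BC17}, relating directions of recession to the barrier cone / domain of the conjugate) guaranteeing that $\ran M^T$ meets $\dom f^*$ in its relative interior. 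In fact \ref{l:QCequiv-iii} is the sharp condition and \ref{l:QCequiv-iv} is its specialization: if $f$ is coercive then $\ker M\setminus\{0\}$ carries no nonzero recession direction, so the hypothesis of \ref{l:QCequiv-iii} holds vacuously except at $0$.

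Finally, \ref{l:QCequiv-ivb} reduces to \ref{l:QCequiv-iv} since strong convexity implies supercoercivity implies coercivity (as recorded in the preliminaries), and \ref{l:QCequiv-v} is handled directly: if $M^TM$ is invertible then $M^T$ has rank $n$, so $\ran M^T=\R^n$, whence $\dom f^*-\ran M^T=\R^n$ and \eqref{eq:QC} holds trivially. I expect the main obstacle to be condition \ref{l:QCequiv-iii}: it requires the precise duality dictionary between recession directions of $f$, the barrier cone of $\dom f^*$, and how $\ran M^T$ (a subspace) can intersect $\ri(\dom f^*)$. The delicate point is that one only controls recession directions lying in $\ker M$, and one must translate ``$f$ grows in every such nonzero direction (modulo the lineality directions where $f$ is affine)'' into the statement that the subspace $\ran M^T$ penetrates the relative interior of $\dom f^*$ rather than merely touching its boundary; getting the lineality-space bookkeeping exactly right is where care is needed.
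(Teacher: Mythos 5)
Your handling of items (i), (ii), (iv), (v) and (vi) is essentially the paper's own argument: (i) via $\ri(\dom f^*-\ran M^T)=\ri(\dom f^*)-\ran M^T$ (so that (i) is in fact \emph{equivalent} to \eqref{eq:QC}); (ii) by showing $\ri(\ran\partial f)=\ri(\dom f^*)$ from the sandwich between $\ri(\dom f^*)$ and $\cl(\dom f^*)$; (iv) via coercivity $\Rightarrow 0\in\inte(\dom f^*)$; and the last two by reduction to (iv) and by $\ran M^T=\R^n$, respectively. (One imprecision: in (ii) you invoke ``convexity'' of the two sets, but $\ran\partial f$ need not be convex; the sandwich $\ri(\dom f^*)\subseteq\ran\partial f\subseteq\dom f^*$ is what actually carries the argument, as in the paper.)

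The genuine gap is item (iii), which is the substantive case of the lemma and which you explicitly leave as a plan, flagging the lineality-space bookkeeping as ``where care is needed'' without carrying it out. The paper resolves exactly this difficulty with a specific device your sketch lacks: it sets $F:=f+h$ with $h:=\eta\|M(\cdot)-a\|^2$, a \emph{full-domain} quadratic. Then $\rec F=\rec f+\rec h$, where $\rec h$ vanishes on $\ker M$ and is $+\infty$ off $\ker M$, so hypothesis (iii) translates verbatim into ``$(\rec F)(x)>0$ for all $x$ outside $\{x:-(\rec F)(-x)=(\rec F)(x)=0\}$,'' which by Rockafellar's Corollary~13.3.4(b) is equivalent to $0\in\ri(\dom F^*)$; finally, since $h$ has full domain, $(f+h)^*=f^*\square h^*$ and hence $\dom F^*=\dom f^*+\dom h^*=\dom f^*+\ran M^T=\dom f^*-\ran M^T$, giving \eqref{eq:QC}. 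Your proposed route---translating the recession condition into $\ri(\dom f^*)\cap\ran M^T\neq\varnothing$ via the recession/barrier-cone dictionary---is sound in principle, precisely because (i) is equivalent to \eqref{eq:QC}; but the translation is the entire content of (iii), and you never produce it. Worse, the most natural attempt along your lines, namely applying conjugate duality to $f+\iota_{\ker M}$ (whose conjugate would have domain $\dom f^*+\ran M^T$), breaks down because $\dom f$ need not meet $\ker M$, so $f+\iota_{\ker M}$ can fail to be proper; the paper's full-domain quadratic perturbation is exactly the device that circumvents this obstruction. Without it (or an equivalent argument, e.g.\ applying the same Rockafellar corollary to the support function of $\cl(\dom f^*+\ran M^T)$), case (iii) remains unproven.
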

\begin{proof}
\ref{l:QCequiv-i}:
By \cite[Corollary~6.6.2]{Rock72}, 
 $$ 
 \ri(\dom f^* - \ran M^T)=\ri(\dom f^*) - \ran M^T.
 $$ 
Consequently, \ref{l:QCequiv-i} follows from \cite[Corollary~6.15]{BC17}.

\ref{l:QCequiv-ii}: By invoking \ref{l:QCequiv-i} , it suffices to show that
\begin{equation}\label{l:QCequiv-e1}
\ri(\dom f^*)=\ri(\ran\partial f).
\end{equation}
Indeed, by \cite[Proposition~16.4(i) and Corollary 16.18(i)]{BC17}, 
\begin{equation*}
\ri (\dom f^*)\subseteq \dom \partial f^*\subseteq \dom f^*.
\end{equation*}
Now, by employing \cite[Theorem~6.3]{Rock72}, 
\begin{equation*}
\cl(\dom f^*)=\cl(\dom \partial f^*)
\quad\text{and}\quad
\ri(\dom f^*)=\ri(\dom\partial f^*).
\end{equation*}
Finally, since $\dom \partial f^*=\ran\partial f$, we arrive at \eqref{l:QCequiv-e1}.

\ref{l:QCequiv-iii}: We consider the function $F=f+h$, where $h=\eta\|M(\cdot)-a\|^2$ for some $a\in\R^m$ and $\eta>0$. By \cite[Proposition~9.30(vi)]{BC17},
$$\rec F = \rec f +\rec h.$$
Moreover, since $\|\cdot\|^2$ is supercoercive, \cite[Proposition~9.30(vii) and Example~9.32]{BC17} imply that
$$(\rec h)(x):=\left\{\begin{array}{ll}
0, & \text{if } Mx=0,\\
+\infty, & \text{otherwise}.\end{array}\right.$$
Hence, \ref{l:QCequiv-iii} implies that $(\rec F)(x)>0$ for all vectors $x\in\R^n$ except those satisfying $-\rec F(-x)=\rec F(x)=0$. By \cite[Corollary~13.3.4(b)]{Rock72}, this is equivalent to
\begin{equation}\label{ax08311809a}
0\in\ri(\dom F^*).
\end{equation}
On the other hand, since $\dom h^* = \ran M^T$, by employing \cite[Proposition~12.6(ii)]{BC17} we see that 
\begin{equation*}
\dom f^* - \ran M^T = \dom f^* + \ran M^T
= \dom f^* +\dom h^* = \dom(f^*\square h^*).
\end{equation*}
Now, since $h$ has full domain, $0\in\inte(\dom f-\dom h)$. Consequently, by \cite[Theorem~11.23(a)]{RW} we arrive at $f^*\square h^*=(f+h)^*$ which, in turn, implies that
\begin{equation}\label{ax08311809b}
\dom f^* - \ran M^T =  \dom(f^*\square h^*)=\dom (f+h)^*=\dom F^*.
\end{equation}
By combining \eqref{ax08311809a} and \eqref{ax08311809b} we obtain \eqref{eq:QC}.

\ref{l:QCequiv-iv}: If $f$ is coercive, then \cite[Proposition~14.16]{BC17} implies that $0\in\inte(\dom f^*)\subseteq\ri(\dom f^*)$. Since, clearly, $0\in\ran M^T$, we employ \ref{l:QCequiv-i} and conclude \eqref{eq:QC}.

\ref{l:QCequiv-ivb}: Follows from \ref{l:QCequiv-iv} since every strongly convex function is coercive.

\ref{l:QCequiv-v}: If $M^TM$ is invertible, then $\ran M^T=\R^n$. Since $\dom f^*\neq\varnothing$, \eqref{eq:QC} follows trivially.%
\end{proof}

We now prove the existence of the $x$-update via maximal comonotonicity of $Q$ which is guaranteed by \cref{as:conv_param,as:QC}.

\begin{lemma}[existence of the $x$-update under constraint qualifications]
\label{p:Qmax}
Let $M\in\R^{m\times n}$ be nonzero and let {$f:\R^n\to\left]-\infty,\infty\right]$} be proper, lower semicontinuous, and $\alpha$-convex for some $\alpha\in\R_+$. 
Let \cref{as:QC} hold. Then the operator $Q$ defined in \eqref{e:Q} is maximally $\frac{\alpha}{\|M\|^2}$-comonotone. Consequently, the $x$-update in \eqref{alg:x} is {well defined} and
\begin{equation*}
y^k+\gamma(Mx^{k+1}-z^k) = J_{\gamma Q}(y^k-\gamma z^k).
\end{equation*}
\end{lemma}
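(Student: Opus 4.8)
The plan is to establish maximal $\frac{\alpha}{\|M\|^2}$-comonotonicity of $Q$ and then deduce the well-definedness of the $x$-update as a direct consequence via the resolvent characterization already developed. I would begin by recalling from \Cref{l:Qcomono} that $Q$ is $\frac{\alpha}{\|M\|^2}$-comonotone, so what remains is to upgrade this to \emph{maximal} comonotonicity. Since $\alpha\geq 0$, I would invoke \Cref{f:maxcom}\ref{f:maxcom_II}, which reduces maximal $\frac{\alpha}{\|M\|^2}$-comonotonicity to the two conditions that $Q$ is $\frac{\alpha}{\|M\|^2}$-comonotone (already in hand) \emph{and} that $Q$ is maximally monotone. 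Thus the crux is to prove that $Q$ is maximally monotone.

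To prove maximal monotonicity of $Q=(-M)\circ(\partial f)^{-1}\circ(-M^T)$, I would exploit the fact that $\partial f$ is maximally monotone (by \Cref{f:subd_maxmon} with $\alpha\geq 0$, or by the classical Rockafellar theorem in the convex case), hence so is its inverse $(\partial f)^{-1}=\partial f^*$. The operator $Q$ then has the form $N^T\circ A\circ N$ with $N=-M^T$ (equivalently, a linear-composition/precomposition of the maximally monotone operator $A=\partial f^*$). Maximal monotonicity of such a composition is not automatic; it requires a constraint qualification relating the range of the linear map to the domain of $A$. This is precisely where \cref{as:QC}, namely $0\in\ri(\dom f^*-\ran M^T)=\ri(\dom A-\ran M^T)$, enters. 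I would cite the standard result on maximal monotonicity of $M^T\circ A\circ M$ (for instance \cite[Corollary~25.6]{BC17} or the composition results around there) whose hypothesis is exactly a range-domain interiority condition of this type; \cref{as:QC} supplies that hypothesis, yielding maximal monotonicity of $Q$.

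Once maximal $\frac{\alpha}{\|M\|^2}$-comonotonicity of $Q$ is secured, the remaining claims follow mechanically. Since $\gamma>0=-\frac{\alpha}{\|M\|^2}$ when $\alpha=0$, and more generally $\gamma>-\frac{\alpha}{\|M\|^2}$ because $\alpha\geq 0$, I would apply \Cref{f:resolvent_comono}\ref{f:resolvent_comono_II} to conclude that $J_{\gamma Q}$ has full domain (and is single-valued by part~\ref{f:resolvent_comono_I}). Feeding this into the equivalence chain of \Cref{l:xk_exists}, the full-domain property of $J_{\gamma Q}$ is exactly the stated necessary and sufficient condition for the $x$-update to exist for every $(y^k,z^k)$, and the identity $v^k=y^k+\gamma(Mx^{k+1}-z^k)=J_{\gamma Q}(y^k-\gamma z^k)$ is read off directly from \Cref{l:xk_exists}\ref{l:xk_exists-iii}.

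The main obstacle I anticipate is the maximal monotonicity of the composition $Q$, rather than its mere monotonicity: monotonicity of $N^T A N$ is elementary and already handled in \Cref{l:Qcomono}, but maximality of a composed/precomposed monotone operator genuinely needs a constraint qualification and cannot be taken for granted. Verifying that \cref{as:QC} matches the hypothesis of the invoked composition theorem (in particular confirming the correct identification $\dom(\partial f^*)$ versus $\dom f^*$ in the interiority condition, which \Cref{l:QCequiv} part~\ref{l:QCequiv-ii} has already reconciled via $\ri(\dom f^*)=\ri(\ran\partial f)$) is the delicate point, and the rest is a routine assembly of previously established facts.
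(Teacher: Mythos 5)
Your proposal is correct, and its overall architecture coincides with the paper's: comonotonicity of $Q$ from \Cref{l:Qcomono}, reduction of maximal comonotonicity to maximal monotonicity via \Cref{f:maxcom}\ref{f:maxcom_II}, and the final assembly through \Cref{f:resolvent_comono} and \Cref{l:xk_exists} are exactly the paper's steps. The one place where you genuinely diverge is the proof that $Q$ is maximally monotone. The paper does \emph{not} invoke an operator-level composition theorem; instead it applies the subdifferential chain rule \cite[Theorem~23.9]{Rock72} (see also \cite[Corollary~16.53]{BC17}) to the proper, convex, lsc function $f^*$ and the linear map $-M^T$, for which \cref{as:QC} is precisely the required qualification (it is equivalent, via \Cref{l:QCequiv}\ref{l:QCequiv-i}, to $\ran M^T\cap\ri(\dom f^*)\neq\varnothing$). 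This yields the identity $Q=\partial\bigl(f^*\circ(-M^T)\bigr)$, so $Q$ is the subdifferential of a proper, convex, lsc function and its maximal monotonicity follows from \Cref{f:subd_maxmon}. Your route---citing the theorem on maximal monotonicity of $L^*AL$ for a maximally monotone $A=(\partial f)^{-1}=\partial f^*$---is a legitimate alternative and is more general, since it never uses that $A$ is a subdifferential. But it is also more delicate, exactly at the point you flag: the hypothesis of that composition theorem is an interiority condition on $\ran L-\dom A=\ran M^T-\dom\partial f^*$, and $\dom\partial f^*$ need not be convex, so translating \cref{as:QC} (phrased with $\dom f^*$) into that hypothesis requires the sandwiching argument $\ri(\dom f^*)=\ri(\dom\partial f^*)$ from the proof of \Cref{l:QCequiv}\ref{l:QCequiv-ii}, together with some care about relative interiors of nearly convex sets. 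The paper's chain-rule route sidesteps this entirely, because Rockafellar's qualification is stated directly in terms of $\ri(\dom f^*)$; in exchange, your route foregoes the extra structural information that $Q$ is itself a subdifferential (hence cyclically monotone), which the paper gets for free.
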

\begin{proof}

Since $f$ is convex, proper and lsc, so is $f^*$ (see, e.g., \cite[Proposition~13.13]{BC17}). Consequently, since \Cref{as:QC} holds, the constraint qualifications in the chain rule \cite[Theorem~23.9]{Rock72} (see also \cite[Corollary~16.53]{BC17}) are met for the convex function $f^*$ and the linear operator $-M^T$ and we obtain
\begin{equation}\label{eq:chainruleQ}
\partial (f^*\circ (-M^T))= (-M)\circ (\partial f^*) \circ (-M^T)
= (-M)\circ (\partial f)^{-1} \circ (-M^T) = Q.
\end{equation}
We see that $Q$ is the subdifferential of the proper, convex and lower semicontinuous function $f^*\circ (-M^T)$ and, as such, $Q$ is maximally monotone by \Cref{f:subd_maxmon}. It now follows that $Q$ is maximally $\frac{\alpha}{\|M\|^2}$-comonotone by \cref{l:Qcomono} and \cref{f:maxcom}\ref{f:maxcom_II}.

Finally, \cref{f:resolvent_comono} implies that $J_{\gamma Q}$ is single-valued with full domain. The existence of the $x$-update now follows from \cref{l:xk_exists}.%
\end{proof}

\begin{remark}\label{rem:QC history} \cref{as:QC} and several of the constraint qualification in \Cref{l:QCequiv}  have been widely used for the analysis of the classical ADMM in the literature. In this relation, we list some classical and recent references:
\begin{enumerate}
\item In \cite{Gab83} the convergence of the ADMM for the
variational inequality problem
\begin{align*}
&\text{find $x,w\in\R^n$ such that}\\
&w\in A(x)\quad\text{and}\quad \langle w,y-x\rangle +g(My)-g(Mx)\geq 0,\quad\forall y\in\R^n;
\end{align*}
was established. One of the assumptions is that either $A$ is strongly monotone or $M^TM$ is an isomorphism (see \cite[Theorem~5.1]{Gab83}).
When applied to problem \eqref{eq:P}, these assumptions become
\begin{equation*}
\text{$f$ is strongly convex\quad or\quad $M^TM$ is invertible},
\end{equation*}
which implies \cref{as:QC} by \Cref{l:QCequiv}~\ref{l:QCequiv-ivb} and \ref{l:QCequiv-v}.

\item
\cite{BPCPE11} is one of the most widely cited studies of the ADMM. Therein, no conditions are imposed (apart from the existence of saddle points; see~\Cref{r:assumptions_cp}) in order to derive convergence of the ADMM. However, it was pointed out in \cite{CST17} that this may fail since in this case the steps of the ADMM may not be {well defined}, i.e., the argmins may not exist. In particular, for the existence of $x^{k+1}$, the authors of \cite{CST17} require the condition in \cref{l:QCequiv}\ref{l:QCequiv-iii} (see \cite[Assumption~1]{CST17}). 

\item In \cite{RLY19}, the condition in \cref{l:QCequiv}\ref{l:QCequiv-i} was imposed to guarantee the existence of $x^{k+1}$.

\item In \cite{EckThesis}, the condition in \Cref{l:QCequiv}\ref{l:QCequiv-ii} was utilized in order to obtain the chain rule in \eqref{eq:chainruleQ}.
It was referred to as {\em dual normality} (see \cite[Definition~3.22 and Proposition~3.30]{EckThesis}). The argument therein also follows directly from \cite[Theorem~23.9]{Rock72}. The chain rule leads to the maximal monotonicity of $Q$, which, in turn, guarantees the existence of $x^{k+1}$.

\item The invertibility of $M^TM$  in \Cref{l:QCequiv}\ref{l:QCequiv-v} is, arguably, the most common assumption for the convergence of ADMM in the literature. Indeed, it is assumed in \cite{ACL16,BotCset19,CP11,FG83,MZ19}, to name a few.
\end{enumerate}
\end{remark}

Although $M^TM$ being invertible or $f$ being strongly convex are two of the more restrictive conditions in \cref{l:QCequiv}, as we show next, they do provide an additional strength: uniqueness of the $x$-update which, in turn, implies convergence of the sequence $\seq{x^k}{k}$. \cref{as:QC} alone only guarantees the convergence of the sequence $\seq{Mx^k}{k}$ (see \cref{t:cvg_aadmm}).

\begin{lemma}[criteria for  uniqueness of the $x$-update]\label{l:xk_unique}
Let $M\in\R^{m\times n}$ be nonzero and let {$f:\R^n\to\left]-\infty,\infty\right]$} be proper, lower semicontinuous and $\alpha$-convex, where $\alpha\in\R_+$.  Set
\begin{equation*}
v^k:= J_{\gamma Q}(y^k-\gamma z^k)\ \text{where}\ Q\ \text{is the operator defined~in~\eqref{e:Q}}.
\end{equation*}
The following hold. 
\begin{enumerate}
\item \label{l:xk_unique_f} If $\alpha>0$ (i.e. $f$ is strongly convex), then $(\partial f)^{-1}$ is Lipschitz continuous and the $x$-update in \eqref{alg:x} is uniquely defined by
\begin{equation*}
x^{k+1}=(\partial f)^{-1}(-M^Tv^k).
\end{equation*}
\item \label{l:xk_unique_M} If $M^TM$ is invertible, then the $x$-update in \eqref{alg:x} is uniquely defined by
\begin{equation*}
x^{k+1}=(M^TM)^{-1}M^T\left(\tfrac{1}{\gamma}(v^k-y^k)+z^k\right).
\end{equation*} 
\end{enumerate}
\end{lemma}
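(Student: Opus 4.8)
The plan is to leverage \Cref{l:xk_exists}, which already establishes the equivalences characterizing when $x^{k+1}$ satisfies the $x$-update step. In particular, that lemma tells us that $x^{k+1}$ satisfies \eqref{alg:x} if and only if $x^{k+1}\in(\partial f)^{-1}(-M^Tv^k)$, where $v^k=y^k+\gamma(Mx^{k+1}-z^k)$ and, equivalently, $v^k=J_{\gamma Q}(y^k-\gamma z^k)$. Since the lemma at hand defines $v^k$ precisely as $J_{\gamma Q}(y^k-\gamma z^k)$, the task reduces to proving that the inclusion $x^{k+1}\in(\partial f)^{-1}(-M^Tv^k)$ determines $x^{k+1}$ \emph{uniquely} under each of the two additional hypotheses. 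Thus the main content is a uniqueness (single-valuedness) argument for the relevant operator, together with deriving the stated closed-form expression in each case.

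For part \ref{l:xk_unique_f}, I would argue as follows. Since $f$ is $\alpha$-convex with $\alpha>0$, \Cref{f:subd_maxmon} gives that $\partial f$ is maximally $\alpha$-monotone, hence in particular $\alpha$-strongly monotone. A standard consequence of $\alpha$-strong monotonicity is that the inverse operator $(\partial f)^{-1}$ is single-valued and Lipschitz continuous with constant $1/\alpha$: indeed, if $x_1\in(\partial f)^{-1}(u_1)$ and $x_2\in(\partial f)^{-1}(u_2)$, then $(x_i,u_i)\in\gra\partial f$, so strong monotonicity yields $\langle x_1-x_2,u_1-u_2\rangle\geq\alpha\|x_1-x_2\|^2$, and Cauchy--Schwarz then forces $\|x_1-x_2\|\leq\frac{1}{\alpha}\|u_1-u_2\|$. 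In particular, $u_1=u_2$ implies $x_1=x_2$, so $(\partial f)^{-1}$ is single-valued. Applying this with $u=-M^Tv^k$ shows that the set $(\partial f)^{-1}(-M^Tv^k)$ is a singleton, giving $x^{k+1}=(\partial f)^{-1}(-M^Tv^k)$ as claimed.

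For part \ref{l:xk_unique_M}, I would start from the characterization $v^k=y^k+\gamma(Mx^{k+1}-z^k)$ supplied by \Cref{l:xk_exists}\ref{l:xk_exists-ii}. Rearranging gives $Mx^{k+1}=\frac{1}{\gamma}(v^k-y^k)+z^k$. Left-multiplying by $M^T$ yields $M^TMx^{k+1}=M^T\big(\frac{1}{\gamma}(v^k-y^k)+z^k\big)$, and since $M^TM$ is invertible by hypothesis we may solve to obtain the stated formula $x^{k+1}=(M^TM)^{-1}M^T\big(\tfrac{1}{\gamma}(v^k-y^k)+z^k\big)$. The invertibility of $M^TM$ means $M$ has trivial kernel (is injective), so the map $x\mapsto Mx$ is injective; this guarantees that the value $Mx^{k+1}$ recovered above corresponds to a \emph{unique} $x^{k+1}$, confirming uniqueness.

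I do not expect a serious obstacle here, since \Cref{l:xk_exists} has done the heavy lifting of translating the argmin condition into the inclusion $x^{k+1}\in(\partial f)^{-1}(-M^Tv^k)$. The only subtlety worth attention is a consistency check: in part \ref{l:xk_unique_M}, the formula is derived from the \emph{necessary} condition $Mx^{k+1}=\frac{1}{\gamma}(v^k-y^k)+z^k$, so one should confirm that the resulting $x^{k+1}$ genuinely satisfies the original optimality inclusion and is the unique such point, rather than merely a point with the correct image under $M$. Because $M$ injective forces at most one preimage, and because \Cref{p:Qmax} already guarantees existence of the $x$-update under the standing constraint qualification, existence and uniqueness dovetail cleanly, so the closed form is both well defined and correct.
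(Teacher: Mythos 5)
Your proposal is correct and takes essentially the same route as the paper's proof: both reduce, via \Cref{l:xk_exists}, to the inclusion $x^{k+1}\in(\partial f)^{-1}(-M^Tv^k)$; both establish single-valuedness of $(\partial f)^{-1}$ when $\alpha>0$ (the paper by citing \Cref{f:subd_maxmon} and \Cref{f:maxcom}\ref{f:maxcom_I} to get maximal $\alpha$-comonotonicity, you by the equivalent direct Cauchy--Schwarz argument from strong monotonicity); and both obtain the second formula by solving $Mx^{k+1}=\tfrac{1}{\gamma}(v^k-y^k)+z^k$ with $(M^TM)^{-1}M^T$. The one point to tighten is your appeal to a ``standing constraint qualification'': \Cref{as:QC} is not a hypothesis of this lemma, so, as the paper does in its first step, you should invoke \Cref{l:QCequiv}\ref{l:QCequiv-ivb} and \ref{l:QCequiv-v} to conclude that each of your two hypotheses implies \Cref{as:QC}, which is what makes $v^k=J_{\gamma Q}(y^k-\gamma z^k)$ well defined (full domain of $J_{\gamma Q}$) and the $x$-update existent via \Cref{p:Qmax}.
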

\begin{proof}
By employing \Cref{l:QCequiv}\ref{l:QCequiv-ivb} and \Cref{l:QCequiv}\ref{l:QCequiv-v}, respectively, we see that the conditions in \ref{l:xk_unique_f} and \ref{l:xk_unique_M} imply \Cref{as:QC}. Consequently, \Cref{p:Qmax} implies that the $x$-update in \eqref{alg:x} is {well defined} and
\begin{equation*}
v^k:=y^k+\gamma(Mx^{k+1}-z^k) = J_{\gamma Q}(y^k-\gamma z^k).
\end{equation*}
Now, if $f$ is $\alpha$-strongly convex ($\alpha>0$), then by combining \Cref{f:subd_maxmon} with \Cref{f:maxcom}\ref{f:maxcom_I} we see that $(\partial f)^{-1}$ is maximally $\alpha$-comonotone. Consequently, $(\partial f)^{-1}$ is Lipschitz continuous, in particular, single-valued. It follows that the inclusion in \Cref{l:xk_exists}\ref{l:xk_exists-ii} is an equality, i.e.,
\begin{equation*}%\label{eq:xk_uniq_f}
x^{k+1}=(\partial f)^{-1}(-M^Tv^k).
\end{equation*}
If $M^TM$ is invertible, since $v^k=y^k+\gamma(Mx^{k+1}-z^k)$, we conclude that
\begin{equation*}%\label{eq:xk_uniq_M}
x^{k+1}=(M^TM)^{-1}(M^TM)x^{k+1}=(M^TM)^{-1}M^T\left(\tfrac{1}{\gamma}(v^k-y^k)+z^k\right),
\end{equation*}
which completes the proof.%
\end{proof}

\begin{remark}
We note that \Cref{l:xk_unique} is directly implied by~\eqref{alg2:x}. Indeed, if $f$ is strongly convex or $M$ has full column rank (i.e. $M^TM$ is invertible), then the function inside the argmin in~\eqref{alg2:x} is strongly convex, which, in turn, implies existence and uniqueness of minimizers.
\end{remark}

\section{Convergence of the aADMM}\label{sec:conv}

In order to obtain convergence of the aADMM, we adapt $\gamma$ and $\delta$ to the convexity parameters of the two functions $f$ and $g$. We will employ the following lemma in order to guarantee existence of such parameters.

\begin{lemma}[existence of parameters]\label{l:param}
Let $\alpha,\beta\in\R$ such that $\alpha+\beta>0$. Then for every $\gamma,\delta\in\R_{++}$ the following assertions are equivalent.
\begin{enumerate}
\item $2\delta(\alpha+\beta)+(\gamma+\delta)^2<4(\gamma+\alpha)(\delta+\beta)$.
\item $\delta+2\beta>0$ and $$\gamma\in\left]\delta+2\beta-\sqrt{2(\alpha+\beta)(\delta+2\beta)},\delta+2\beta+\sqrt{2(\alpha+\beta)(\delta+2\beta)}\right[.$$
\end{enumerate}
\end{lemma}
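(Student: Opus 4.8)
The plan is to treat this as a purely algebraic equivalence between two inequalities involving $\gamma$, regarding all other quantities as fixed. The statement claims that under the standing hypothesis $\alpha+\beta>0$, the quadratic-in-$\gamma$ inequality in assertion (i) is equivalent to $\gamma$ lying in a specific open interval, together with the positivity condition $\delta+2\beta>0$. My approach is to rearrange (i) into the standard form of a quadratic inequality in $\gamma$ and read off the roots with the quadratic formula, then verify that the discriminant is nonnegative precisely when $\delta+2\beta>0$.

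First I would expand the right-hand side of (i): $4(\gamma+\alpha)(\delta+\beta)=4\gamma(\delta+\beta)+4\alpha(\delta+\beta)$, and move everything to one side so that (i) becomes $(\gamma+\delta)^2 - 4\gamma(\delta+\beta) + 2\delta(\alpha+\beta) - 4\alpha(\delta+\beta) < 0$. Expanding $(\gamma+\delta)^2=\gamma^2+2\gamma\delta+\delta^2$ and collecting the $\gamma$ terms, the linear coefficient becomes $2\delta - 4(\delta+\beta) = -2(\delta+2\beta)$, so (i) reduces to
\begin{equation*}
\gamma^2 - 2(\delta+2\beta)\gamma + \big(\delta^2 + 2\delta(\alpha+\beta) - 4\alpha(\delta+\beta)\big) < 0.
\end{equation*}
The next step is to simplify the constant term. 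I would compute $\delta^2 + 2\delta\alpha + 2\delta\beta - 4\alpha\delta - 4\alpha\beta = \delta^2 - 2\alpha\delta + 2\beta\delta - 4\alpha\beta$, which I expect to factor neatly; the goal is to show this constant equals $(\delta+2\beta)^2 - 2(\alpha+\beta)(\delta+2\beta)$, so that the quadratic is a perfect translate.

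With the quadratic $\gamma^2 - 2(\delta+2\beta)\gamma + c < 0$ in hand, the roots are $\gamma = (\delta+2\beta) \pm \sqrt{(\delta+2\beta)^2 - c}$, and the discriminant is $(\delta+2\beta)^2 - c = 2(\alpha+\beta)(\delta+2\beta)$ by the factorization just described. This is the crux of the equivalence: the solution set of (i) is the open interval between the two roots exactly when this discriminant is strictly positive, which, since $\alpha+\beta>0$ by hypothesis, holds if and only if $\delta+2\beta>0$. When $\delta+2\beta\le0$ the discriminant is nonpositive and the strict inequality (i) has no solutions, matching the fact that assertion (ii) explicitly requires $\delta+2\beta>0$. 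Substituting the discriminant into the root formula yields exactly the interval endpoints $\delta+2\beta \mp \sqrt{2(\alpha+\beta)(\delta+2\beta)}$ of assertion (ii).

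I do not anticipate a genuine obstacle here, since the argument is a routine completion-of-the-square; the only point requiring care is the constant-term factorization, where a sign error in collecting the cross terms would derail the identification of the discriminant. I would therefore present that verification explicitly — confirming $\delta^2 - 2\alpha\delta + 2\beta\delta - 4\alpha\beta = (\delta+2\beta)^2 - 2(\alpha+\beta)(\delta+2\beta)$ by expanding the right-hand side — and then state both directions of the equivalence as immediate consequences of the sign of the discriminant, using $\alpha+\beta>0$ to pin the discriminant's sign entirely on $\delta+2\beta$.
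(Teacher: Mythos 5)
Your proposal is correct and follows essentially the same route as the paper: rewrite assertion (i) as the monic quadratic $\gamma^2-2(\delta+2\beta)\gamma+(\delta+2\beta)(\delta-2\alpha)<0$ (your constant $(\delta+2\beta)^2-2(\alpha+\beta)(\delta+2\beta)$ is exactly this factored form) and then read off the solution interval from the discriminant $2(\alpha+\beta)(\delta+2\beta)$, whose sign is pinned by $\delta+2\beta$ since $\alpha+\beta>0$. The paper compresses this into a two-step chain of equivalences, but the computation is the same.
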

\begin{proof}
 A straightforward computation implies that
\begin{align*}
& 2\delta(\alpha+\beta)+(\gamma+\delta)^2<4(\gamma+\alpha)(\delta+\beta)\\
\iff\quad & \gamma^2-2(\delta+2\beta)\gamma+
(\delta+2\beta)(\delta-2\alpha)<0\\
\iff\quad & \left\{\begin{array}{l}
\delta+2\beta>0,\\
\delta+2\beta-\sqrt{2(\alpha+\beta)(\delta+2\beta)}<\gamma<\delta+2\beta+\sqrt{2(\alpha+\beta)(\delta+2\beta)};\end{array}\right.
\end{align*}
which completes the proof.%
\end{proof}

\begin{theorem}[convergence of the aADMM]
\label{t:cvg_aadmm}
Suppose that Assumptions \ref{as:conv_param}, \ref{as:critpoint} and \ref{as:QC} hold. Let $\delta>\max\{0,-2\beta\}$ and set
\begin{subequations}\label{eq:ass_param}
\begin{align}
&\gamma=\delta+2\beta, & \text{if } \alpha+\beta\|M\|^2=0,\label{eq:ass_paramI}\\
&\gamma\in\left]\max\{0,\delta+2\beta - \Delta_{\delta}\}, \delta+2\beta + \Delta_{\delta} \right[, & \text{if } \alpha+\beta\|M\|^2>0;\label{eq:ass_paramII}
\end{align}
\end{subequations}
where 
$$\Delta_{\delta}:=\frac{1}{\|M\|}\sqrt{2\left({\alpha}+\beta{\|M\|^2}\right)(\delta+2\beta)}.$$ 
Set $(x^0,z^0,y^0)\in\R^n\times\R^m\times\R^m$ and let $\seq{x^k,z^k,y^k}{k}$ be generated by the aADMM \eqref{alg}. Then
\begin{equation*}
Mx^k\to Mx^\star,\quad z^k\to z^\star \quad\text{and}\quad y^k\to y^\star,
\end{equation*}
where $(x^\star,z^\star,y^\star)$ is a critical point of $L_0(x,z,y)$. Consequently, $(x^\star,z^\star)$ is a solution of \eqref{eq:P}.
\end{theorem}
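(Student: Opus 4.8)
The plan is to prove convergence of the aADMM by establishing a duality relation with the adaptive Douglas--Rachford algorithm applied to the operators $Q$ and $S$ defined in~\eqref{e:QS}, and then invoking the convergence result in \Cref{f:aDR_comono}. First I would verify that the hypotheses of \Cref{f:aDR_comono} are met. By \Cref{p:Qmax}, under \Cref{as:conv_param} and \Cref{as:QC}, the operator $Q$ is maximally $\frac{\alpha}{\|M\|^2}$-comonotone; by \Cref{l:zk_exists}, the operator $S$ is maximally $\beta$-comonotone. Setting $a:=\frac{\alpha}{\|M\|^2}$ and $b:=\beta$, \Cref{as:conv_param} gives $a+b=\frac{\alpha+\beta\|M\|^2}{\|M\|^2}\geq 0$, so the comonotonicity parameters satisfy the standing requirement $a+b\geq 0$. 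Moreover, by \Cref{prop:zerQS}, \Cref{as:critpoint} is equivalent to $\zer(Q+S)\neq\emptyset$, giving the nonemptiness hypothesis.

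Next I would translate the parameter conditions. The parameter window in~\eqref{eq:ass_param} must be matched to the aDR conditions~\eqref{e:aDR_comono} with $(\alpha,\beta)$ there replaced by $(a,b)=(\frac{\alpha}{\|M\|^2},\beta)$. A direct substitution shows that the case split $\alpha+\beta\|M\|^2=0$ versus $\alpha+\beta\|M\|^2>0$ in~\eqref{eq:ass_param} corresponds precisely to $a+b=0$ versus $a+b>0$ in~\eqref{e:aDR_comono}. In the strict case, the condition $2\delta(a+b)+(\gamma+\delta)^2<4(\gamma+a)(\delta+b)$ together with \Cref{l:param} yields exactly the interval in~\eqref{eq:ass_paramII} after multiplying through by $\|M\|^2$; the requirement $\delta+2b>0$ becomes $\delta+2\beta>0$, consistent with $\delta>\max\{0,-2\beta\}$. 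I would then define $(\lambda,\mu)$ and $\kappa$ as in~\eqref{e:lm_mu} and~\eqref{e:aDR_kappa}, and run the aDR iteration~\eqref{eq:aDR_iter} on $Q$ and $S$.

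The heart of the argument is showing that the aADMM iterates~\eqref{alg} coincide with the aDR iterates on $(Q,S)$ under the identification $x_k \leftrightarrow y^k$ (the aDR variable is the Lagrange multiplier). Here the computations in \Cref{l:zk_exists} and \Cref{l:xk_exists}/\Cref{p:Qmax} are the essential building blocks: \Cref{l:zk_exists} gives $y^{k+1}=J_{\delta S}(y^k+\delta Mx^{k+1})$, so $J_{\delta S}$ recovers the dual update from the intermediate quantity $y^k+\delta Mx^{k+1}$, while \Cref{p:Qmax} gives $v^k=y^k+\gamma(Mx^{k+1}-z^k)=J_{\gamma Q}(y^k-\gamma z^k)$. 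I would chain these together, carefully tracking how $Mx^{k+1}$, $z^{k+1}$, and the relaxation parameters $\lambda,\mu$ fit into $J^\lambda_{\gamma Q}J^\mu_{\delta S}$, to recognize that the sequence $(y^k)$ produced by the aADMM is exactly a $\kappa$-relaxed aDR sequence, possibly after identifying the right intermediate shadow point. Once this identification is in place, \Cref{f:aDR_comono}\ref{f:aDR_comonoI}--\ref{f:aDR_comonoIII} provide that $y^k\to y^\star\in\Fix\Tdr$, that $J_{\delta S}$ of the limit lies in $\zer(Q+S)$, and that the shadow differences vanish; translating back through \Cref{prop:zerQS} gives a critical point $(x^\star,z^\star,y^\star)$, hence a solution of~\eqref{eq:P} by \Cref{rem:conv_sum}.

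The main obstacle I anticipate is the bookkeeping in the identification step: matching the relaxation parameters $\lambda,\mu,\kappa$ and the two distinct penalty parameters $\gamma,\delta$ of the aADMM against the single-operator resolvent composition of the aDR requires precise algebraic tracking, and one must confirm that the convergence of $y^k$ and the vanishing shadow difference indeed yield $Mx^k\to Mx^\star$ and $z^k\to z^\star$ (note that $x^k$ itself need not converge without the extra conditions of \Cref{l:xk_unique}). Establishing $z^k\to z^\star$ would use \Cref{f:aDR_comono}\ref{f:aDR_comonoII} together with $z^{k+1}=\frac{1}{\delta}(\Id-J_{\delta S})(y^k+\delta Mx^{k+1})$ and the continuity of $J_{\delta S}$, while $Mx^k\to Mx^\star$ would follow from combining the vanishing shadow difference in~\ref{f:aDR_comonoIII} with the update relation for $v^k$. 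The convexity bookkeeping from \Cref{rem:conv_sum} then guarantees the limit point is genuinely optimal.
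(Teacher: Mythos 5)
Your overall route is the paper's own: establish maximal comonotonicity of $Q$ and $S$ via \Cref{p:Qmax} and \Cref{l:zk_exists}, get $\zer(Q+S)\neq\emptyset$ from \Cref{as:critpoint} through \Cref{prop:zerQS}, translate the parameter window through \Cref{l:param}, and invoke \Cref{f:aDR_comono}. The genuine gap sits exactly at what you call the heart of the argument, and it is not mere bookkeeping: the aDR governing sequence is \emph{not} $(y^k)$. The identification that makes everything work is $w^k:=y^k+\delta z^k$, with $\lambda=1+\delta/\gamma$, $\mu=1+\gamma/\delta$; one then computes $J_{\delta S}(w^k)=y^k$, $J^{\mu}_{\delta S}(w^k)=y^k-\gamma z^k$, $J_{\gamma Q}J^{\mu}_{\delta S}(w^k)=v^k$, and finally $w^{k+1}=y^k+\delta Mx^{k+1}=w^k+(\lambda-1)(v^k-y^k)=\Tdr(w^k)$. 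In this picture $(y^k)$ is the \emph{shadow} sequence, $y^k=J_{\delta S}(w^k)$, so your assertion that ``$y^k\to y^\star\in\Fix\Tdr$'' is wrong: \Cref{f:aDR_comono}\Cref{f:aDR_comonoI} gives $w^k\to w^\star\in\Fix\Tdr$, and \Cref{f:aDR_comonoII} then gives $y^k=J_{\delta S}(w^k)\to y^\star:=J_{\delta S}(w^\star)\in\zer(Q+S)$. Running \eqref{eq:aDR_iter} on $(y^k)$ itself simply does not reproduce the aADMM; your hedge about ``the right intermediate shadow point'' flags the issue but does not resolve it. A related consequence you miss is that $\kappa$ is \emph{not} a free choice in $\left]0,\overline{\kappa}\right[$: the identification forces $\kappa=\frac{\lambda-1}{\lambda}=\frac{\delta}{\gamma+\delta}$, and verifying $\frac{\delta}{\gamma+\delta}<\overline{\kappa}$ is precisely why \Cref{l:param} carries the extra term $2\delta\left(\frac{\alpha}{\|M\|^2}+\beta\right)$ beyond condition \eqref{eq:assumtions_comonoII}.

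The second gap is in the closing step. From $y^\star\in\zer(Q+S)$, \Cref{prop:zerQS} yields \emph{some} critical point $(\tilde{x},\tilde{z},y^\star)$; but $S(y^\star)=(\subd g)^{-1}(y^\star)$ may be multivalued, so there is no a priori reason its components agree with the limits of your sequences, whereas the theorem asserts that $z^k\to z^\star$ and $Mx^k\to Mx^\star$ for a triple $(x^\star,z^\star,y^\star)$ that \emph{is} a critical point. The paper closes this by observing $(y^k,z^k)\in\gra S$ and $(v^k,-Mx^{k+1})\in\gra Q$, passing to the limit via \Cref{l:graph} (graphs of maximally comonotone operators are closed) to obtain $y^\star\in\subd g(z^\star)$ and $-M^Ty^\star\in\partial f(x^\star)$ with $Mx^\star=z^\star$, and then invoking \Cref{lem:critpoint} (not \Cref{rem:conv_sum} directly). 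An alternative repair consistent with your outline: $z^\star\in S(y^\star)$ is immediate from $y^\star=J_{\delta S}(w^\star)$ and $z^\star=\frac{1}{\delta}(w^\star-y^\star)$, and $-z^\star\in Q(y^\star)$ follows from the fixed-point identity $J^{\lambda}_{\gamma Q}J^{\mu}_{\delta S}(w^\star)=w^\star$, which forces $J_{\gamma Q}(y^\star-\gamma z^\star)=y^\star$. Either way, an argument is required here that your proposal does not supply.
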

\begin{proof}
Let $Q$ and $S$ be defined by \eqref{e:QS}. Our aim is to establish that the sequence generated by the aADMM \eqref{alg} corresponds to the sequence generated by the aDR~\eqref{eq:aDR_iter} when applied to $Q$ and $S$. Note that $\zer(Q+S)\neq\emptyset$ due to \Cref{as:critpoint,prop:zerQS}.

On the one hand, for any $\beta$, it holds that $\max\{0,-2\beta\}\geq \max\{0,-\beta\}$. So $\delta>\max\{0,-\beta\}$. Consequently, \cref{l:zk_exists} implies that $S$ is maximally $\beta$-comonotone and that the $z$-update in \eqref{alg:z} is uniquely defined by
\begin{equation*}
y^{k+1}= J_{\delta S}(y^{k+1}+\delta z^{k+1}).
\end{equation*}
On the other hand, \cref{p:Qmax} implies that $Q$ is maximally $\frac{\alpha}{\|M\|^2}$-comonotone, that the $x$-update in \eqref{alg:x} is well defined and
\begin{equation*}
v^k:=y^k+\gamma(Mx^{k+1}-z^k)= J_{\gamma Q}(y^k-\gamma z^k).
\end{equation*}
Set $w^k:=y^k+\delta z^k$ for each $k=0,1,2,\ldots$. Set further
\begin{equation*}
\lambda:=1+\frac{\delta}{\gamma}
\quad\text{and}\quad
\mu:=1+\frac{\gamma}{\delta}.
\end{equation*}
Then, clearly, $\lambda,\mu$ satisfy \eqref{e:lm_mu}. We employ $J_{\delta S}$ and $J_{\gamma Q}$ in order to compute $J^{\lambda}_{\gamma Q}\big(J^{\mu}_{\delta S}(w^k)\big)$ via the following steps:
\begin{align*}
J_{\delta S}(w^k)&=J_{\delta S}(y^k+\delta z^k)=y^k,\\
J^{\mu}_{\delta S}(w^k)&=(1-\mu)w^k+\mu J_{\delta S}(w^k)\\
&=(1-\mu)(y^k+\delta z^k)+\mu y^k\\
&=y^k+(1-\mu)\delta z^k \\
&=y^k-\gamma z^k,\\
J_{\gamma Q}\big(J^{\mu}_{\delta S}(w^k)\big)&=J_{\gamma Q}(y^k-\gamma z^k)=v^k,\\
\text{and so,}\quad\ J^{\lambda}_{\gamma Q}\big(J^{\mu}_{\delta S}(w^k)\big)&=(1-\lambda)J^{\mu}_{\delta S}(w^k)+\lambda J_{\gamma Q}\big(J^{\mu}_{\delta S}(w^k)\big)\\
&=(1-\lambda)(y^k-\gamma z^k)+\lambda v^k\\
&=y^k+\delta z^k+\lambda(v^k-y^k)\\
&=w^k+\lambda(v^k-y^k).
\end{align*}
We also observe that
\begin{align*}
w^{k+1}&=y^{k+1}+\delta z^{k+1}=y^k+\delta Mx^{k+1}=y^k+(\lambda-1)\gamma Mx^{k+1}\\
&=y^k+(\lambda-1)(v^k-y^k+\gamma z^k)=y^k+(\lambda-1)(v^k-y^k)+\delta z^k\\
&=w^k+(\lambda-1)(v^k-y^k).
\end{align*}
Consequently, by setting $\kappa:=\frac{\lambda-1}{\lambda}=\frac{\delta}{\gamma+\delta}\in{\left]0,1\right[}$ and by recalling~\eqref{eq:aDR_iter} we arrive at
\begin{equation}\label{eq:dual_aDR}
\Tdr(w^k):=(1-\kappa)w^k+\kappa J^{\lambda}_{\gamma Q}\big(J^{\mu}_{\delta S}(w^k)\big)=w^k+(\lambda-1)(v^k-y^k)=w^{k+1}.
\end{equation}
Summing up, the sequence $\seq{w^k}{k}$ is generated by the aDR algorithm applied to $S$~and~$Q$. 

We now apply \cref{f:aDR_comono} to the two operators $Q$ and $S$. To this end, it suffices to check that the conditions in \cref{f:aDR_comono} for the parameters, i.e., conditions \eqref{e:aDR_comono} and \eqref{e:aDR_kappa} for $\frac{\alpha}{\|M\|^2},\beta,\gamma,\delta$, and $\kappa$, are met. By \Cref{as:conv_param},
\begin{equation*}
\frac{\alpha}{\|M\|^2}+\beta\geq 0.
\end{equation*}

If $\frac{\alpha}{\|M\|^2}+\beta=0$, then \eqref{eq:ass_paramI} implies
$\delta=\gamma-2\beta=\gamma+2\frac{\alpha}{\|M\|^2}$,
that is, \eqref{eq:assumtions_comonoI} is satisfied.

If $\frac{\alpha}{\|M\|^2}+\beta>0$, then, by \Cref{l:param}, we see that \eqref{eq:ass_paramII} implies that
\begin{subequations}
\begin{align}
&&2\delta\left(\frac{\alpha}{\|M\|^2}+\beta\right)+(\gamma+\delta)^2&<4\left(\gamma+\frac{\alpha}{\|M\|^2}\right)(\delta+\beta)
\label{e:200705a}\\
\iff&& 
\frac{\delta}{\gamma+\delta} &< \frac{4\left(\gamma+\frac{\alpha}{\|M\|^2}\right)(\delta+\beta)-(\gamma+\delta)^2}{2(\gamma+\delta)\left(\frac{\alpha}{\|M\|^2}+\beta\right)}=:\overline{\kappa}.
\label{e:200705b}
\end{align}
\end{subequations}
On the one hand, \eqref{e:200705a} implies 
\begin{equation*}
(\gamma+\delta)^2<4\left(\gamma+\frac{\alpha}{\|M\|^2}\right)(\delta+\beta),
\end{equation*}
which leads to \eqref{eq:assumtions_comonoII}. On the other hand, \eqref{e:200705b} implies that
$0<\kappa<\overline{\kappa}$, i.e., \eqref{e:aDR_kappa} is satisfied.

Consequently, we meet all of the conditions in order to apply \Cref{f:aDR_comono}: \Cref{f:aDR_comono}\Cref{f:aDR_comonoI} implies that $w^k\to w^\star\in \Fix\Tdr$ while \Cref{f:aDR_comono}\Cref{f:aDR_comonoII} implies that
\begin{equation*}
y^k=J_{\delta S}(w^k) \to J_{\delta S}(w^\star)=:y^\star \in \zer(Q+S). 
\end{equation*}
By the convergence of $w^k$ and $y^k$ we see that
\begin{equation*}
z^k=\frac{1}{\delta}(w^k-y^k)\to \frac{1}{\delta}(w^\star-y^\star)=:z^\star.
\end{equation*}
Finally, \Cref{f:aDR_comono}\Cref{f:aDR_comonoIII} implies that $y^k-v^k\to 0$, thus, $v^k\to y^\star$ and
\begin{equation*}
Mx^k\to z^\star.
\end{equation*}
We prove that there exists $x^\star\in\R^n$ such that $Mx^\star=z^\star$ and that $(x^\star,z^\star,y^\star)$ is a critical point. Indeed, we note that
\begin{equation}\label{eq:graph_inc}
\seq{y^k,z^k}{k}\subseteq \gra S \quad\text{and}\quad \seq{v^k,-Mx^{k+1}}{k}\subseteq \gra Q.
\end{equation}
By taking limits in \eqref{eq:graph_inc} and by recalling \Cref{l:graph}, we conclude that $(y^\star,z^\star)\in \gra S$ and $(y^\star,-z^\star)\in \gra Q$, equivalently,
$$ y^\star\in\subd g(z^\star) \quad\text{and}\quad -M^Ty^\star\in\partial f(x^\star),$$
for some $x^\star$ such that $z^\star=Mx^\star$. Finally, since we now have a critical point $(x^\star,z^\star,y^\star)$, we invoke \cref{lem:critpoint} which concludes the proof.%
\end{proof}

\begin{remark}[on the strongly-weakly convex assumption]\label{rm:wsc1}
In the strongly-weakly convex settings, certain assumptions are usually imposed on the parameters $(\alpha,\beta)$. In our adaptive approach, \cref{as:conv_param} requires
\begin{equation*}
\alpha+\beta\|M\|^2\geq 0.
\end{equation*}
This is an improvement of \cite[Assumption~3.1(i)]{wADMM} for the ADMM, which can be reformulated in the form
\begin{equation}\label{eq:ass_strong}
\alpha+\beta\|M\|^2>0.
\end{equation}
In a different algorithmic approach, the convergence of the \emph{Primal-Dual Hybrid Gradient method} for the strongly-weakly convex setting was established in \cite{MSMC15} by assuming \eqref{eq:ass_strong} (see~\cite[Theorem 2.3]{MSMC15}).
Summing up, we see that our analysis is applicable in the setting of the classical ADMM for convex functions (i.e., $\alpha=\beta=0$) while \cite{MSMC15,wADMM} are not. This observation aligns with the analysis of the adaptive DR algorithm in \cite{BDP19,DPadapt} (see \cite[Remark~5.5]{DPadapt}).
\end{remark}

\begin{remark}[parameters in the classical ADMM]\label{rm:wsc2}
We consider the classical ADMM in the case of strong convexity $\alpha+\beta\|M\|^2>0$. We then set $\gamma=\delta\in\R_{++}$ for $\delta$ such that $\delta+2\beta>0$ and
\begin{align*}
\delta+2\beta-\sqrt{2\left(\frac{\alpha}{\|M\|^2}+\beta\right)(\delta+2\beta)}<\delta<\delta+2\beta+\sqrt{2\left(\frac{\alpha}{\|M\|^2}+\beta\right)(\delta+2\beta)}.
\end{align*}
which is equivalent to
\begin{align*}
\delta > \max\left\{-2\beta,\frac{-2\alpha\beta}{\alpha+\beta\|M\|^2}\right\}=\frac{-2\alpha\beta}{\alpha+\beta\|M\|^2}, \quad\text{if } \beta\leq 0.
\end{align*}
This range of parameters improves the results in \cite{wADMM} which require (see \cite[Assumption~3.1(ii)]{wADMM})
\begin{align*}
\delta > -2\beta+\frac{8\beta^2\|MM^T\|}{\alpha+\beta\|M\|^2}
=\frac{-2\alpha\beta}{\alpha+\beta\|M\|^2}+\frac{6\beta^2\|M\|^2}{\alpha+\beta\|M\|^2}.
\end{align*}
\end{remark}

\begin{remark}[Self-duality of the aDR]
In the proof of \Cref{t:cvg_aadmm} we have shown that the aADMM algorithm is, in fact, a dual aDR iteration. We now discuss the case where $M=\Id$ in \eqref{eq:P}. In this case, it is known that the classical DR is self-dual in the sense of  \cite[Proposition~3.43]{EckThesis}, see also, \cite[Corollary~4.3]{BBHM12}. It is not difficult to show that the adaptive DR is also self-dual. Indeed, in the case where $M=\Id$, iteration \eqref{eq:dual_aDR} is the aDR applied to the operators
\begin{equation*}
S=(\subd g)^{-1} \quad\text{and}\quad Q=(-\Id)\circ(\partial f)^{-1}\circ (-\Id).
\end{equation*}
Therefore, one easily checks that
\begin{subequations}\label{e:proxQS}
\begin{align}
J_{\delta S}&=\Id-\delta\left(\prox_{\tfrac{1}{\delta}g}\right)\circ\left(\frac{1}{\delta}\Id\right),\label{e:proxS}\\
J_{\gamma Q}&=\Id+\gamma\left(\prox_{\tfrac{1}{\gamma}f}\right)\circ\left(-\frac{1}{\gamma}\Id\right).\label{e:proxQ}
\end{align}
\end{subequations}
By substituting \eqref{e:proxQS} into \eqref{eq:dual_aDR} and taking into account \eqref{e:lm_mu} and the change of variable $t^{k}:=\frac{1}{\delta}w^k$, we obtain the scheme
\begin{equation*}
t^{k+1}=(1-\kappa)t^k + \kappa R_2(R_1 (t^{k})),
\end{equation*}
where 
\begin{align*}
R_1:&=(1-\lambda)\Id + \lambda\prox_{\tfrac{1}{\delta}g},\\
R_2:&=(1-\mu)\Id + \mu\prox_{\tfrac{1}{\gamma}f}.
\end{align*}
We see that the aADMM with $M=\Id$ is  the aDR in \cite[Theorem~5.4]{DPadapt} applied to $g$ and $f$ with parameters $(\delta^{-1},\gamma^{-1},\lambda,\mu)\in\R^4_{++}$ and $\kappa=\frac{\lambda-1}{\lambda}\in{]0,1[}$.
\end{remark}

In \cref{t:cvg_aadmm} we see that the sequence $\seq{Mx^k}{k}$ converges, however there is no indication as to whether $\seq{x^k}{k}$ converges or not. Similarly to the classical case (see, for instance, \cite[Proposition 2.2]{ACL16} and \cite[Proposition 2]{Ts91}), this can be remedied if we assume that $M^TM$ is invertible or $f$ is strongly convex ($\alpha>0$).

\begin{theorem}[convergence of the aADMM under stronger assumptions]
\label{t:cvg_aadmm2}
Suppose that \Cref{as:conv_param}, \Cref{as:critpoint} and one of the following conditions hold:
\begin{enumerate}
\item  $f$ is strongly convex (i.e., $\alpha>0$),
\item  $M^TM$ is invertible.
\end{enumerate}%
Let $\delta>\max\{0,-2\beta\}$ and let $\gamma>0$ {satisfy}~\eqref{eq:ass_param}. 
Set $(x^0,z^0,y^0)\in\R^n\times\R^m\times\R^m$ and let $\seq{x^k,z^k,y^k}{k}$ be generated by the aADMM~\eqref{alg}. Then
\begin{equation*}
x^k\to x^\star,\quad z^k\to z^\star \quad\text{and}\quad y^k\to y^\star,
\end{equation*}
where $(x^\star,z^\star,y^\star)$ is a critical point of $L_0(x,z,y)$. Consequently, $(x^\star,z^\star)$ is a solution of \eqref{eq:P}.
\end{theorem}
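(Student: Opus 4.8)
The plan is to build directly on \Cref{t:cvg_aadmm}, which already gives us the convergence $Mx^k\to z^\star$, $z^k\to z^\star$, $y^k\to y^\star$ together with the existence of a critical point $(x^\star,z^\star,y^\star)$ and the auxiliary convergence $v^k\to y^\star$ established in its proof, where $v^k=y^k+\gamma(Mx^{k+1}-z^k)$. Since both listed conditions imply \Cref{as:QC} (via \Cref{l:QCequiv}\ref{l:QCequiv-ivb} and \ref{l:QCequiv-v}), all hypotheses of \Cref{t:cvg_aadmm} are met, so it remains only to upgrade the convergence of $\seq{Mx^k}{k}$ to convergence of $\seq{x^k}{k}$ itself. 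The natural tool is \Cref{l:xk_unique}, which under either hypothesis provides a \emph{closed-form, single-valued} expression for $x^{k+1}$ in terms of $v^k$, $y^k$ and $z^k$.

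First I would treat case (ii), where $M^TM$ is invertible. Here \Cref{l:xk_unique}\ref{l:xk_unique_M} gives
\begin{equation*}
x^{k+1}=(M^TM)^{-1}M^T\left(\tfrac{1}{\gamma}(v^k-y^k)+z^k\right).
\end{equation*}
Every quantity on the right converges: $v^k\to y^\star$, $y^k\to y^\star$ and $z^k\to z^\star$. Since $(M^TM)^{-1}M^T$ is a fixed linear (hence continuous) map, passing to the limit yields $x^k\to x^\star:=(M^TM)^{-1}M^T\left(\tfrac{1}{\gamma}(y^\star-y^\star)+z^\star\right)=(M^TM)^{-1}M^Tz^\star$. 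One then checks $Mx^\star=z^\star$ (using $z^\star=Mx^\star$ from the critical point, so $M^Tz^\star=M^TMx^\star$ and the formula recovers $x^\star$ consistently), so the limit $x^\star$ is precisely the $x$-component of the critical point.

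For case (i), where $f$ is strongly convex ($\alpha>0$), I would invoke \Cref{l:xk_unique}\ref{l:xk_unique_f}, which gives $x^{k+1}=(\partial f)^{-1}(-M^Tv^k)$ and, crucially, asserts that $(\partial f)^{-1}$ is Lipschitz continuous (being maximally $\alpha$-comonotone). Composing the continuous linear map $-M^T$ with the Lipschitz map $(\partial f)^{-1}$ and using $v^k\to y^\star$ gives $x^k\to x^\star:=(\partial f)^{-1}(-M^Ty^\star)$. This limit satisfies $-M^Ty^\star\in\partial f(x^\star)$ by construction, which is exactly the relevant critical-point relation, and one confirms $Mx^\star=z^\star$ by continuity of $M$ applied to $x^k\to x^\star$ combined with $Mx^k\to z^\star$ from \Cref{t:cvg_aadmm}. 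The convergence of $z^k$ and $y^k$, and the fact that $(x^\star,z^\star,y^\star)$ is a critical point (hence, by \Cref{lem:critpoint}, that $(x^\star,z^\star)$ solves \eqref{eq:P}), are then inherited verbatim from \Cref{t:cvg_aadmm}.

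The proof is essentially a continuity argument layered on top of the main theorem, so I do not anticipate a serious obstacle; the only point requiring care is the \emph{Lipschitz continuity} of $(\partial f)^{-1}$ in case (i). This is what rules out the possibility that the single-valued selection $x^{k+1}$ oscillates despite $v^k$ converging, and it is guaranteed precisely because strong convexity of $f$ makes $(\partial f)^{-1}$ maximally $\alpha$-comonotone with $\alpha>0$; the closed-form expressions in \Cref{l:xk_unique} already encapsulate this, so the argument reduces to applying the appropriate continuous map to a convergent sequence.
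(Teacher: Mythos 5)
Your proposal is correct and follows essentially the same route as the paper's own proof: both conditions are reduced to \Cref{as:QC} via \Cref{l:QCequiv}, \Cref{t:cvg_aadmm} is invoked to get the critical point and the convergence of $(Mx^k,z^k,y^k)$ and hence of $v^k$, and then \Cref{l:xk_unique} supplies the closed-form single-valued expressions for $x^{k+1}$ (Lipschitz $(\partial f)^{-1}$ in the strongly convex case, the linear map $(M^TM)^{-1}M^T$ in the invertible case) to which one passes to the limit. The only difference is cosmetic — you treat the two cases in the opposite order and add a consistency check identifying the limit with the $x$-component of the critical point, which the paper handles implicitly.
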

\begin{proof} By employing \Cref{l:QCequiv}\ref{l:QCequiv-ivb} and \Cref{l:QCequiv}\ref{l:QCequiv-v}, respectively, we see that the conditions in \ref{l:xk_unique_f} and \ref{l:xk_unique_M} imply \Cref{as:QC}. Consequently, we employ \cref{t:cvg_aadmm} in order to obtain a critical point $(x^\star,z^\star,y^\star)$ of $L_0(x,z,y)$ where
	$$
Mx^k\to Mx^\star,\quad z^k\to z^\star \quad\text{and}\quad y^k\to y^\star.	
$$
Set $v^k:=y^k+\gamma(Mx^{k+1}-z^k)$, $k=0,1,\ldots$. Then $v^k\to y^\star$. If $f$ is strongly convex, then, by employing~\Cref{l:xk_unique}\ref{l:xk_unique_f}, we conclude that $(\partial f)^{-1}$ is Lipschitz continuous and
$$x^{k+1}=(\partial f)^{-1}(-M^Tv^k) \to (\partial f)^{-1}(-M^Ty^\star)=x^\star.$$
Suppose now that $M^TM$ is invertible. Then, by employing \Cref{l:xk_unique}\ref{l:xk_unique_M}, we conclude that
$$ x^{k+1}=(M^TM)^{-1}M^T\left(\tfrac{1}{\gamma}(v^k-y^k)+z^k\right) \to (M^TM)^{-1}M^Tz^\star=x^\star,$$
which completes the proof.%
\end{proof}

\subsection{Summary: Existence and Convergence}

For the sake of convenience and accessibility, we recollect and unify all of the conditions from our discussion regarding the existence of a solution of \eqref{eq:P} and the convergence of the aADMM and the ADMM.

\begin{corollary}[existence and convergence of aADMM]\label{cor:aADMM}
Let $M\in\R^{m\times n}$ be a nonzero matrix, let $f:\R^n\to\left]-\infty,+\infty\right]$ be proper, lower semicontinuous and $\alpha$-convex, and let $g:\R^m\to\left]-\infty,+\infty\right]$ be proper, lower semicontinuous and $\beta$-convex where $\alpha, \beta\in\R$ are parameters such that 
\begin{equation*}\label{eq:corAs1}
\alpha\geq 0\quad\text{and}\quad\alpha+\beta\|M\|^2\geq 0.
\end{equation*}
Suppose that one of the following conditions holds:
\begin{enumerate}[label={\rm(A.\arabic*)},leftmargin=0.6in]
\item\label{A1} the Lagrangian $L_0$ has a critical point,
\item\label{A2} the Lagrangian $L_0$ has a saddle point,
\item\label{A3} problem \eqref{eq:P} has an optimal solution and $0\in \ri(\dom g - M(\dom f))$;
\end{enumerate}
and that one of the following conditions holds:
\begin{enumerate}[label={\rm(B.\arabic*)},leftmargin=0.6in]
\item\label{B1} $0\in\ri(\dom f^* - \ran M^T)$,
\item\label{B2} $\ri(\ran \partial f)\cap \ran M^T \neq\varnothing$,
\item\label{B3} $(\rec f)(x)>0$ for all $x\in \ker M\setminus\{x\in\R^n : -(\rec f)(-x)=(\rec f)(x)=0\}$,
\item\label{B4} $f$ is coercive (in particular, supercoercive),
\item\label{B5} $\alpha > 0$ (i.e., $f$ is strongly convex),
\item\label{B6} $M^TM$ is invertible.
\end{enumerate}
Let $\delta>\max\{0,-2\beta\}$ and set
\begin{align*}
&\gamma=\delta+2\beta, & \text{if } \alpha+\beta\|M\|^2=0,\\
&\gamma\in\left]\max\{0,\delta+2\beta - \Delta_{\delta}\}, \delta+2\beta + \Delta_{\delta} \right[, & \text{if } \alpha+\beta\|M\|^2>0;
\end{align*}
where 
$$\Delta_{\delta}:=\frac{1}{\|M\|}\sqrt{2\left({\alpha}+\beta{\|M\|^2}\right)(\delta+2\beta)}.$$ 
Set $(x^0,z^0,y^0)\in\R^n\times\R^m\times\R^m$ and let $\seq{x^k,z^k,y^k}{k}$ be generated by the aADMM \eqref{alg}. Then
\begin{equation*}
Mx^k\to Mx^\star,\quad z^k\to z^\star \quad\text{and}\quad y^k\to y^\star,
\end{equation*}
where $(x^\star,z^\star,y^\star)$ is a critical point of $L_0(x,z,y)$. Consequently, $(x^\star,z^\star)$ is a solution of \eqref{eq:P}. If, in particular,  \ref{B5} or  \ref{B6} holds, then $x^k\to x^\star$.
\end{corollary}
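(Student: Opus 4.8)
The plan is to treat \Cref{cor:aADMM} as a bookkeeping result that repackages \Cref{t:cvg_aadmm} and \Cref{t:cvg_aadmm2}: the standing hypotheses on $f$, $g$, $M$, $\alpha$, $\beta$ are exactly those of \cref{as:conv_param}, and the parameter ranges prescribed for $\delta$ and $\gamma$ coincide verbatim with \eqref{eq:ass_param}. Hence it remains only to verify that each of the three conditions \ref{A1}--\ref{A3} forces \cref{as:critpoint}, and that each of the six conditions \ref{B1}--\ref{B6} forces \cref{as:QC}. Once both assumptions are in force, the convergence $Mx^k\to Mx^\star$, $z^k\to z^\star$, $y^k\to y^\star$ to the components of a critical point follows immediately from \Cref{t:cvg_aadmm}, and the final sharpening to $x^k\to x^\star$ under \ref{B5} or \ref{B6} follows from \Cref{t:cvg_aadmm2}.

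For the \textbf{A}-block I would argue as follows. Condition \ref{A1} is literally \cref{as:critpoint}. For \ref{A2}, I would invoke the first assertion of \Cref{l:cp-sp}, which states that any saddle point of $L_0$ is a critical point; crucially this direction requires only that $f$ and $g$ be proper, so it remains valid even though $g$ may be merely weakly convex here. For \ref{A3}, I note that the qualification $0\in\ri(\dom g - M(\dom f))$ is precisely item \ref{as:cp-qc1} of \Cref{l:qc-cp}, whence the existence of an optimal solution of \eqref{eq:P} is equivalent to the existence of a critical point of $L_0$; the assumed solvability of \eqref{eq:P} thus yields \cref{as:critpoint}.

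For the \textbf{B}-block, condition \ref{B1} is verbatim \cref{as:QC}, and the remaining five conditions map one-to-one onto the sufficient conditions collected in \Cref{l:QCequiv}: \ref{B2}, \ref{B3}, \ref{B4}, \ref{B5} and \ref{B6} are, respectively, items \ref{l:QCequiv-ii}, \ref{l:QCequiv-iii}, \ref{l:QCequiv-iv}, \ref{l:QCequiv-ivb} and \ref{l:QCequiv-v} of that lemma, each of which is already shown there to imply \eqref{eq:QC}. Hence \cref{as:QC} holds in every case.

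With \cref{as:conv_param}, \cref{as:critpoint} and \cref{as:QC} all verified, I would apply \Cref{t:cvg_aadmm} to obtain the stated convergence of $Mx^k$, $z^k$ and $y^k$ to a critical point $(x^\star,z^\star,y^\star)$, which by \cref{lem:critpoint} (already embedded in the conclusion of \Cref{t:cvg_aadmm}) certifies that $(x^\star,z^\star)$ solves \eqref{eq:P}; under \ref{B5} or \ref{B6}, \Cref{t:cvg_aadmm2} then upgrades this to $x^k\to x^\star$. I do not anticipate a genuine mathematical obstacle, since no new estimate is required; the only points demanding care are routing \ref{A2} through the properness-only direction of \Cref{l:cp-sp} so as not to rely inadvertently on convexity of $g$, and matching the \textbf{B}-labels faithfully to the corresponding items of \Cref{l:QCequiv}.
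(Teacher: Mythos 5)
Your proposal is correct and follows essentially the same route as the paper's own proof: verify \cref{as:conv_param} from the standing hypotheses, route \ref{A1}--\ref{A3} through \Cref{l:cp-sp} and \Cref{l:qc-cp} to get \cref{as:critpoint}, route \ref{B1}--\ref{B6} through \Cref{l:QCequiv} to get \cref{as:QC}, and then invoke \Cref{t:cvg_aadmm} and \Cref{t:cvg_aadmm2}. Your added care in using only the properness-only direction of \Cref{l:cp-sp} for \ref{A2} (since $g$ need not be convex) is a point the paper leaves implicit, but it does not change the argument.
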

\begin{proof}
Clearly, \Cref{as:conv_param} holds. By \Cref{l:qc-cp} and \Cref{l:cp-sp}, each one of the conditions \ref{A1}--\ref{A3} implies \Cref{as:critpoint}. Finally, by \Cref{l:QCequiv}, each one of the conditions \ref{B1}--\ref{B6} implies \Cref{as:QC}. We conclude the proof by invoking \Cref{t:cvg_aadmm} and \Cref{t:cvg_aadmm2}.%
\end{proof}

As we point out in \Cref{rm:wsc1}, our assumptions on \eqref{eq:P} extend the framework of the classical ADMM for two convex functions.

\begin{corollary}[existence and convergence of ADMM]\label{cor:ADMM}
Let $M\in\R^{m\times n}$ be a nonzero matrix and let $f:\R^n\to\left]-\infty,+\infty\right]$ and $g:\R^m\to\left]-\infty,+\infty\right]$ be proper, lower semicontinuous and convex. Suppose that one of the following conditions holds:
\begin{enumerate}[label={\rm(A.\arabic*)},leftmargin=0.6in]
\item\label{A1c} the Lagrangian $L_0$ has a saddle point (equivalently, a critical point),
\item\label{A2c}  problem \eqref{eq:P} has an optimal solution and $0\in \ri(\dom g - M(\dom f))$;
\end{enumerate}
and that one of the following conditions holds:
\begin{enumerate}[label=\text{\rm(B.\arabic*)},leftmargin=0.6in]
\item\label{B1c} $0\in\ri(\dom f^* - \ran M^T)$,
\item\label{B2c} $\ri(\ran \partial f)\cap \ran M^T \neq\varnothing$,
\item\label{B3c} $(\rec f)(x)>0$ for all $x\in \ker M\setminus\{x\in\R^n : -(\rec f)(-x)=(\rec f)(x)=0\}$,
\item\label{B4c} $f$ is coercive (in particular, supercoercive),
\item\label{B5c} $f$ is strongly convex,
\item\label{B6c} $M^TM$ is invertible.
\end{enumerate}
Let $\gamma>0$. Set $(x^0,z^0,y^0)\in\R^n\times\R^m\times\R^m$ and let $\seq{x^k,z^k,y^k}{k}$ be generated by the ADMM \eqref{alg0}. Then
\begin{equation*}
Mx^k\to Mx^\star,\quad z^k\to z^\star \quad\text{and}\quad y^k\to y^\star,
\end{equation*}
where $(x^\star,z^\star,y^\star)$ is a critical point of $L_0(x,z,y)$. Consequently, $(x^\star,z^\star)$ is a solution of \eqref{eq:P}. If, in particular, \ref{B5c} or \ref{B6c} holds, then $x^k\to x^\star$.
\end{corollary}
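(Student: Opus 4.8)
The plan is to obtain \Cref{cor:ADMM} as the convex specialization of \Cref{cor:aADMM}, taking $\alpha=\beta=0$. First I would observe that proper, lower semicontinuous convexity of $f$ and $g$ means precisely that each is $0$-convex, so with $\alpha=\beta=0$ we have $\alpha\geq 0$ and $\alpha+\beta\|M\|^2=0\geq 0$; hence \Cref{as:conv_param} is satisfied automatically, and everything established for the aADMM applies.

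Next I would match the two hypothesis groups to those of \Cref{cor:aADMM}. For the existence of a critical point (\Cref{as:critpoint}): since $f$ and $g$ are convex, \Cref{l:cp-sp} guarantees that saddle points and critical points of $L_0$ coincide, so condition \ref{A1c} is exactly condition \ref{A1} (equivalently \ref{A2}) of \Cref{cor:aADMM}, while \ref{A2c} is verbatim \ref{A3}. Thus either of \ref{A1c}, \ref{A2c} yields \Cref{as:critpoint} via \Cref{l:qc-cp} together with \Cref{l:cp-sp}, exactly as in the proof of \Cref{cor:aADMM}. For the constraint qualification (\Cref{as:QC}): the six conditions \ref{B1c}--\ref{B6c} are literally \ref{B1}--\ref{B6}, each implying \eqref{eq:QC} through \Cref{l:QCequiv}.

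The one point requiring genuine attention is the reduction of the parameter scheme, and this is the step I would state most carefully. Since here $\alpha+\beta\|M\|^2=0$, the parameter prescription of \Cref{cor:aADMM} collapses to its first branch, forcing $\gamma=\delta+2\beta=\delta$ for every admissible $\delta>\max\{0,-2\beta\}=0$. With $\gamma=\delta$ the two penalty parameters of the aADMM \eqref{alg} coincide, and the scheme \eqref{alg} becomes identical to the classical ADMM \eqref{alg0} driven by the single parameter $\gamma>0$; consequently any $\gamma>0$ is admissible, and the iterates generated by \eqref{alg0} are precisely those generated by \eqref{alg}.

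Finally I would invoke \Cref{cor:aADMM} directly: it delivers $Mx^k\to Mx^\star$, $z^k\to z^\star$ and $y^k\to y^\star$ with $(x^\star,z^\star,y^\star)$ a critical point of $L_0$ and $(x^\star,z^\star)$ a solution of \eqref{eq:P}; and under \ref{B5c} or \ref{B6c}, which are \ref{B5} and \ref{B6}, it additionally yields $x^k\to x^\star$. I do not anticipate a real obstacle, as the statement is a pure specialization; the only step that must not be glossed over is the identification $\gamma=\delta$, which is exactly what recovers the classical single-parameter ADMM from the adaptive two-parameter scheme and legitimizes applying the general convergence result to \eqref{alg0}.
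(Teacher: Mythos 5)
Your proposal is correct and follows exactly the paper's own route: the paper proves \Cref{cor:ADMM} in one line by setting $\alpha=\beta=0$ in \Cref{cor:aADMM}. Your explicit verification that the parameter prescription collapses to $\gamma=\delta+2\beta=\delta>0$, so that the adaptive scheme \eqref{alg} literally becomes the classical ADMM \eqref{alg0} for any $\gamma>0$, is precisely the identification the paper leaves implicit, and it is handled correctly.
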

\begin{proof}
The proof follows from \Cref{cor:aADMM} when we set $\alpha=\beta=0$.%
\end{proof}

\subsection{Optimality Conditions and Stopping Criteria}
\label{ss:stop}

By following \cite[\S~3.3]{BPCPE11}, we discuss the primal-dual residual stopping criteria for the aADMM which we will employ in \cref{sec:exp} for our numerical experiments.

By \Cref{d:crit},  $(x^\star,z^\star,y^\star)$ is a critical point if
\begin{subequations}\label{e:stop1}
\begin{align}
&0\in\partial f(x^\star)+M^Ty^\star,\label{e:stop1a}\\
&0\in\subd g(z^\star)-y^\star,\label{e:stop1b}\\
&Mx^\star-z^\star=0.\label{e:stop1c}
\end{align}
\end{subequations}
Clearly, the sequence $r^{k}:=Mx^{k}-z^{k}$ can be viewed as residual for \eqref{e:stop1c}.
By optimality at each step in \eqref{alg} we obtain
\begin{subequations}
\begin{align*}
0&\in\partial f(x^{k+1})+M^Ty^k+\gamma M^T(Mx^{k+1}-z^k),\\
0&\in\subd g(z^{k+1})-y^k-\delta(Mx^{k+1}-z^{k+1}).
\end{align*}
\end{subequations}
We note that $y^{k+1}=y^k+\delta(Mx^{k+1}-z^{k+1})$. Consequently,
\begin{subequations}
\begin{align*}
0&\in\partial f(x^{k+1})+M^Ty^{k+1}-\delta M^T(Mx^{k+1}-z^{k+1})+\gamma M^T(Mx^{k+1}-z^k),\\
0&\in\subd g(z^{k+1})-y^{k+1}.
\end{align*}
\end{subequations}
We see that \eqref{e:stop1b} holds at each step whereas for \eqref{e:stop1a} we will monitor the dual residual
\begin{equation*}\label{eq:dual_res}
\begin{aligned}
s^{k+1}:= &-\delta M^T(Mx^{k+1}-z^{k+1})+\gamma M^T(Mx^{k+1}-z^k)\\
= & M^T(\gamma z^{k}-\delta z^{k+1}-(\gamma-\delta)Mx^{k+1}).
\end{aligned}
\end{equation*}
We will therefore employ the primal-dual residual stopping criteria
\begin{subequations}\label{e:stop-primaldual}
\begin{align}
\|r^k\|:=\|Mx^{k}-z^{k}\| & \leq  \epsilon_{\text{primal}}, \\
\|s^k\|:=\|M^T(\gamma z^{k-1}-\delta z^k-(\gamma-\delta)Mx^k)\|& \leq \epsilon_{\text{dual}},
\end{align}
\end{subequations}
for some fixed $\epsilon_{\text{primal}}>0$ and $ \epsilon_{\text{dual}}>0$. We note that the case where $\delta=\gamma$ in \eqref{e:stop-primaldual} coincides with the primal-dual stopping criteria employed for the classical ADMM (see~\cite[\S~3.3]{BPCPE11}).
Finally, $\epsilon_{\text{primal}}$ and $\epsilon_{\text{dual}}$ may be chosen using absolute and relative tolerances, for example,
\begin{align*}
\epsilon_{\text{primal}} & = \sqrt{m}~\epsilon_{\text{abs}}+\epsilon_{\text{rel}}\max\{\|Mx^k\|,\|z^k\|\},\\
\epsilon_{\text{dual}} & = \sqrt{n}~\epsilon_{\text{abs}}+\epsilon_{\text{rel}}\|M^Tz^k\|,
\end{align*}
where $m$ and $n$ are the dimensions of the matrix $M$.

\begin{remark}[Stopping criteria from the aDR perspective]
As a counterpart, we examine the stopping criteria from the perspective of the aDR algorithm. By employing the notations in the proof of \cref{t:cvg_aadmm}, we consider
\begin{equation*}
w^k:=y^k+\delta z^k
\quad\implies\quad y^k=J_{\delta S}(w^k).
\end{equation*}
Since $w^k\to w^\star\in \Fix T_{\text{aDR}}$, it is reasonable to employ the Cauchy-type stopping criteria
\begin{equation}\label{e:cauchy_adr}
\|w^{k+1}-w^k\|\leq \epsilon_{\text{aDR}}.
\end{equation}
We observe that
\begin{align*}
r^k&=\delta^{-1}(y^k-y^{k-1}),\\
s^k&=M^T((\delta-\gamma)Mx^k -\delta z^k+\gamma z^k) + M^T\gamma(z^{k-1}-z^k)\\
&=M^T(\delta-\gamma)r^k+M^T\gamma(z^{k-1}-z^{k}).
\end{align*}
Consequently, \eqref{e:stop-primaldual} is equivalent to $\|y^{k+1}-y^k\|$ and $\|z^{k+1}-z^k\|$ being small. By the triangle inequality, 
\begin{equation}\label{eq:wk1}
\|w^{k+1}-w^k\|\leq \|y^{k+1}-y^k\|+\delta\|z^{k+1}-z^k\|.
\end{equation}
On the other hand, since $z^k\in S(y^k)$ and since $S$ is maximally $\beta$-comonotone, we obtain
\begin{equation}\label{eq:wk2}
\begin{aligned}
\|w^{k+1}-w^k\|^2&=\|(y^{k+1}-y^k) + \delta(z^{k+1}-z^k)\|^2\\
&=\|y^{k+1}-y^k\|^2+\delta^2\|z^{k+1}-z^k\|^2
+2\delta\langle y^{k+1}-y^k, z^{k+1}-z^k\rangle\\
&\geq \|y^{k+1}-y^k\|^2+\delta(\delta+2\beta)\|z^{k+1}-z^k\|^2.
% &=\delta^2\|r^{k+1}\|^2+\delta(\delta+2\beta)\|z^{k+1}-z^k\|^2.
\end{aligned}
\end{equation}
In view of \eqref{eq:wk1} and \eqref{eq:wk2}, we see that the Cauchy stopping criteria \eqref{e:cauchy_adr} with appropriately chosen $\epsilon_{\text{aDR}}$ is equivalent to~\eqref{e:stop-primaldual}, which justifies the use of primal-dual residual stopping criteria.
\end{remark}

\section{Numerical Experiments}\label{sec:exp}

We now examine the applicability and efficiency of our aADMM with numerical experiments. To this end we focus on a \emph{total variation} signal denoising problem for which the classical ADMM has been widely used (see, e.g.,~\cite{CDV10,CP11,CW05,WBAW12} and the references therein). All of our codes are in Python~3.7. The datasets generated during and/or analysed during the current study are available from the corresponding author on reasonable request.

Suppose that a discretized observed signal $\widehat \phi\in\R^n$ is the result of
$$\widehat \phi= \phi + \xi,$$
where $\phi\in \R^n$ is the original signal and $\xi\in\R^n$ is a Gaussian noise with 0 mean and variance $\sigma^2$. The objective of the denoising is to obtain an accurate approximation of $\phi$ from $\widehat \phi$. A common approach consists of solving the total variation regularization problem \cite{Rudin} (see, e.g., \cite[\S 6.4.1]{BPCPE11})
\begin{equation}\label{eq:problem_denoise}
\min_{x\in\R^n} \frac{1}{2}\|x-\widehat \phi\|^2 + \omega P(Dx),
\end{equation}
where $\omega>0$ is the regularization parameter, $P:\R^{n-1}\to\R_+$ is a penalty function in order to induce sparsity and $D\in\R^{n\times (n-1)}$ is the total variation matrix defined component-wise by
\begin{equation*}
D_{ij}:=\left\{\begin{array}{ll}
                1, &\text{if } j=i;\\
                -1, &\text{if } j=i+1;\\
                0, &\text{otherwise.}
               \end{array}\right.
\end{equation*}
We observe that problem \eqref{eq:problem_denoise} is of the form \eqref{eq:P} when we let
\begin{equation*}
f:=\frac{1}{2}\|\cdot-\widehat \phi \|^2,\ g=\omega P(\cdot),
\text{~and~}M:=D.
\end{equation*}
Moreover, the particular structure of this problem enables an easy computation of the  ADMM iteration, as we show next.

\paragraph{$x$-update for quadratic functions:}
Since $f$ is a quadratic function, the $x$-update of the ADMM and the aADMM can be obtained via the solution of a system of linear equations. More precisely, steps \eqref{alg0:x} and \eqref{alg:x} are computed by solving the linear system
\begin{equation*}
(\Id+\gamma M^TM)x^{k+1}=M^T(\gamma z^k-y^k)+\widehat\phi.
\end{equation*} 

\paragraph{$z$-update as a proximal step:}
The minimization step with respect to $z$ can be computed via the proximity operator of $g$. Indeed, \eqref{alg:z} reduces to
\begin{equation}\label{eq:z-update prox}
z^{k+1}=\prox_{\frac{1}{\delta}g}(Mx^{k+1}+\delta^{-1}y^k)=\prox_{\frac{\omega}{\delta}P}(Mx^{k+1}+\delta^{-1}y^k),
\end{equation}
whereas for \eqref{alg0:z} one only replaces $\delta$ by $\gamma$ in~\eqref{eq:z-update prox}.

We assume that the penalty function is separable in the sense that there exists $p:\R\to\R_+$ such that
\begin{equation*}
P(z)=\sum_{i=1}^{n-1} p(z_i), \quad\text{for all } z=(z_1,\ldots,z_{n-1})\in\R^{n-1}.
\end{equation*} 
The proximal mapping of $P$, usually referred to as \emph{thresholding} or \emph{shrinkage}, can be computed component-wise. The type of penalty function chosen in \eqref{eq:problem_denoise}, together with the regularization parameter $\omega$, has a significant impact on the quality of the denoised solution. In \Cref{tbl:thresholding} we list three types of penalty functions and their corresponding thresholding mappings (see, e.g., \cite{WCLQ18}).

\begin{table}[ht!]
\centering\small
\def\arraystretch{0.8}
\begin{tabular}{|Sc|Sc|}
\hline
Penalty function & Thresholding mapping \tabularnewline
\hhline{==}

& \emph{Hard} \tabularnewline
 \(\displaystyle p^{(H)}(x)=\begin{cases}
                    0, & \text{if } x=0,\\
                    1, & \text{otherwise}.
                 \end{cases}\)  & $\prox_{\gamma p^{(H)}}(x)=\begin{cases}
                 0, & \text{if } |x|< \sqrt{2\gamma},\\
                 \{0,x\}, & \text{if } |x|= \sqrt{2\gamma}\\
                 x, & \text{if } |x|> \sqrt{2\gamma}
                 \end{cases}$ \tabularnewline\hline

& \emph{Soft} \tabularnewline $p^{(S)}(x)=|x|$  & $\prox_{\gamma p^{(S)}}(x)=\begin{cases}
                                                  0, & \text{if } |x|\leq \gamma,\\
                                                  \sign(x)(|x|-\gamma),& \text{otherwise.}
                                                 \end{cases}$  \tabularnewline\hline
                
& \emph{Firm} \tabularnewline $p^{(F)}_{\zeta}(x):=\begin{cases}
|x|-\tfrac{x^2}{2\zeta}, & \text{if } |x|\leq\zeta,\\
\tfrac{\zeta}{2}, & \text{otherwise.}\end{cases}$ & $\prox_{\gamma p^{(F)}_{\zeta}}(x)=\begin{cases}
0, & \text{if } |x|\leq \gamma,\\
\frac{\sign(x)(|x|-\gamma)\zeta}{\zeta-\gamma}, & \text{if } \gamma<|x|<\zeta,\\
x, & \text{otherwise.}\end{cases}$ \tabularnewline \hline
\end{tabular}
\caption{Three penalty functions and the corresponding thresholding mappings.}\label{tbl:thresholding}
\end{table}

The \emph{hard} penalty is associated with the $l_0$-norm, it is the one to impose sparsity on the solution. Since the hard penalty is nonconvex (in fact, it is not $\alpha$-convex for any choice of $\alpha\in\R$), there is no guarantee for the convergence of the ADMM when applied to this problem. As an alternative, the $l_0$-norm is replaced by the $l_1$-norm, leading to the proximal mapping known as \emph{soft} thresholding. Although convex, this penalty may yield to biased solutions when the variation of the original signal is large. In order to obtain less biased solutions, some weakly convex penalties, such as the \emph{firm} thresholding~\cite{GB97}, associated to a minimax concave penalty, arises as a tradeoff between the hard and the soft thresholdings (see~\Cref{fig:penalties}).   

\begin{figure}[ht!]\centering
\subfigure[Penalty functions]{\includegraphics[width=0.365\textwidth]{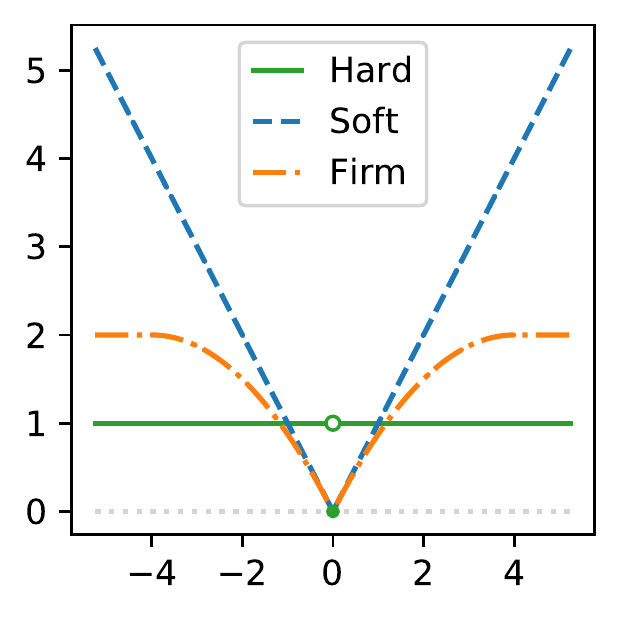}}\hspace{0.05\textwidth}
\subfigure[Thresholding mappings]{\includegraphics[width=0.38\textwidth]{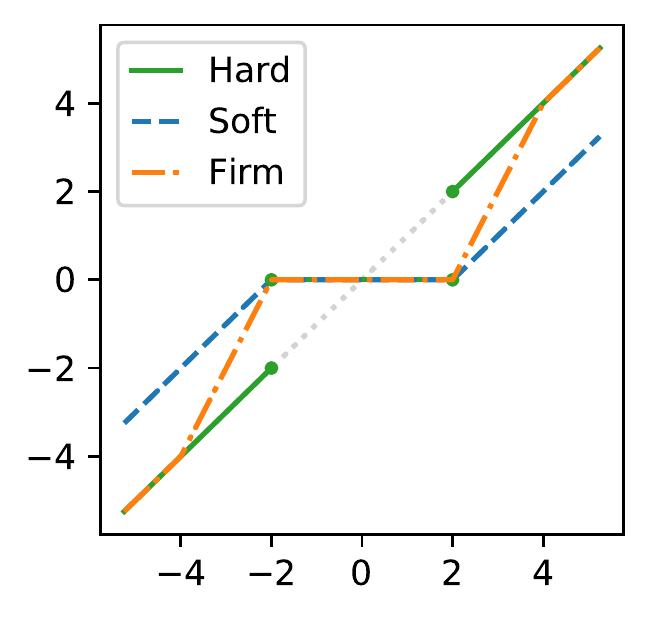}}
\caption{Illustration of the hard, soft and firm penalty (with $\zeta=4$) functions and the corresponding thresholding mappings (with $\gamma=2$).}\label{fig:penalties}
\end{figure}

We observe that the firm penalty function $p^{(F)}_{\zeta}(x)$ is weakly convex with parameter $\frac{-1}{\zeta}$. Indeed, one verifies that the function $p^{(F)}_{\zeta}(x)+\frac{x^2}{2\zeta}$ is convex either by direct computation or via a more general argument such as \cite[Theorem~5.4]{BLP16}.

Since $f$ is $1$-strongly convex, \Cref{as:conv_param} holds whenever $\zeta\geq 4\omega$ since
\begin{equation*}\label{ass:zeta}
1-\|D\|^2\frac{\omega}{\zeta}\geq 0
\quad \iff\quad \zeta\geq\|D\|^2\omega 
\end{equation*}
and the fact that $\|D\|\leq 2$.

\subsection*{Experiment 1: Convex versus Weakly Convex Penalties}

In our first experiment we aim to show that using a weakly convex penalty function may produce more accurate solutions than the $l_1$-norm in certain circumstances. To this end we employ generated\footnote{Signals were generated with the Python package \texttt{PyWavelets} \href{https://pywavelets.readthedocs.io/}{https://pywavelets.readthedocs.io/}} \emph{block signals} (piecewise constant signals) to which we add a gaussian noise with standard deviation $\sigma=0.5$. Given a generated noisy signal, for $50$ values of the penalty parameter $\omega$ equally distributed in the interval ${[0.1,5]}$, we compute two solutions of problem~\eqref{eq:problem_denoise}, one with respect to the soft thresholding (convex) and another with respect to the firm one (weakly convex) with $\zeta=4\omega$. In order to measure the quality of the reconstruction of a denoised signal $x=(x_1,\ldots,x_n)\in\R^n$ with respect to the original signal $\phi=(\phi_1,\ldots,\phi_n)\in\R^n$, we employ the \emph{mean absolute error (MAE)}, defined by
\begin{equation*}
\operatorname{MAE}(x,\phi)=\frac{1}{n}\sum_{i=1}^n |x_i-\phi_i|.
\end{equation*}
The results for a signal of size $n=256$ are shown in \Cref{fig:signal_penalty}. We see that the weakly convex penalty (firm)  achieves more accurate solutions. Indeed, the MAE of the solution produced by the firm thresholding is always smaller than that of the soft thresholding, except for small values of $\omega$ for which the quality of both solutions is deficient. In order to visualize the performance of each penalty function we plot in \Cref{fig:signals} the denoised solutions for $\omega=2$, as well as the original and the noisy signals. We see that the firm thresholding produces less biased solutions at the break points of the signal, in particular, at the break points corresponding to relatively short pieces, where the true variation of the original signal is large.

\begin{figure}[ht!]\centering
\subfigure[MAE with respect to $\omega$\label{fig:signal_penalty}]{\includegraphics[width=0.34\textwidth]{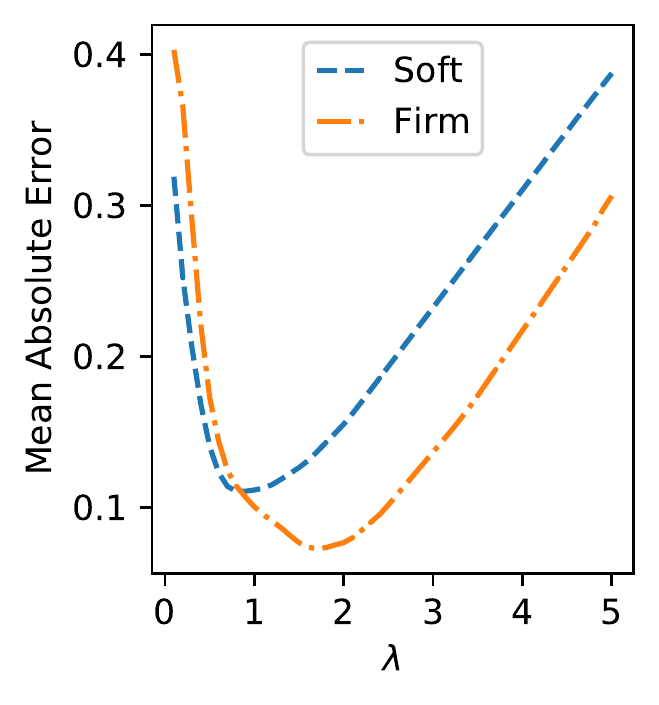}}\hspace{7mm}
\subfigure[Original, noised and denoised signals for $\omega=2$\label{fig:signals}]{\includegraphics[width=0.59\textwidth]{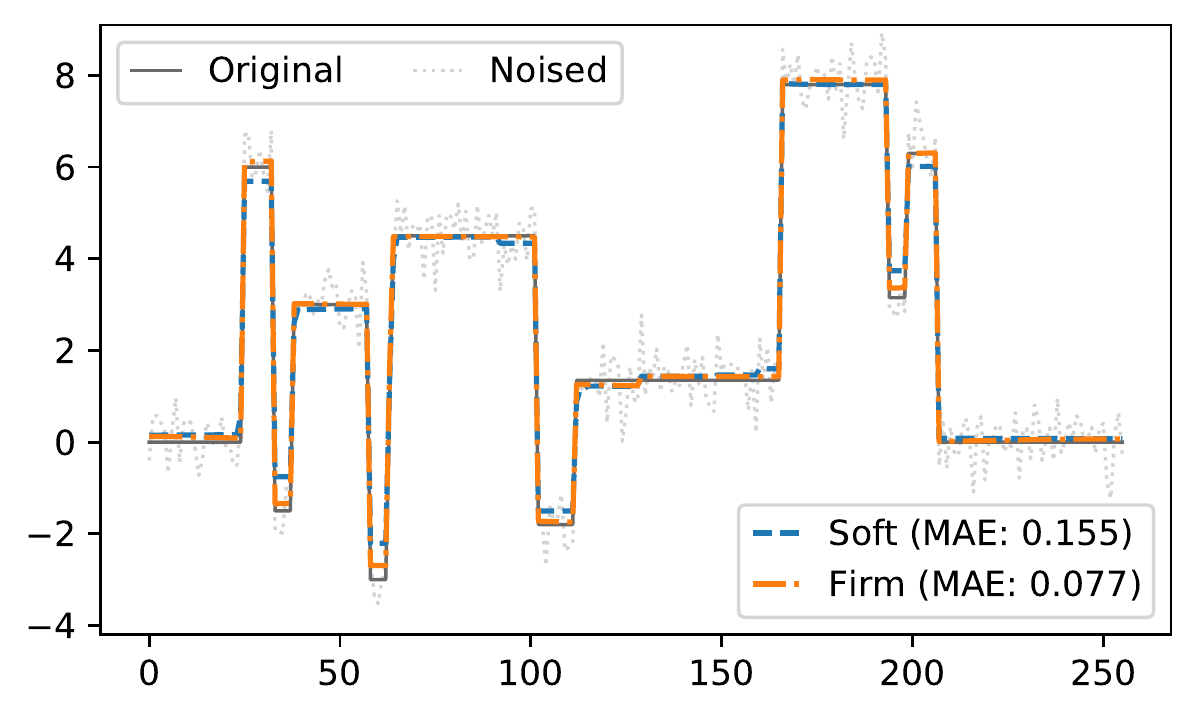}}
\caption{Denoising of a block signal of size $n=256$ with soft and firm thresholdings. (a) Mean absolute error of the solutions for each penalty type with respect to the regularization parameter $\omega$. (b) Plot of the original, noised and the two denoised signals for $\omega=2$.}\label{fig:exp1}
\end{figure}

We repeated the experiment for different signal size $n$ and different variance of the noise $\sigma$. The results are similar: the firm thresholding produces solutions with less error for all large enough values of the penalty parameter $\omega$. Due to the similarity we do not include these in this presentation.

For denoising with the soft thresholding we used the classical ADMM, whereas for the firm denoising we employed the aADMM. In this experiment we focused on the quality of the solutions produced by each penalty function in order to motivate the application of weakly convex functions, rather than the algorithm performance. In our second experiment we  do focus on the performance of the algorithm.

\subsection*{Experiment 2: ADMM versus aADMM}

We now examine the efficiency of the  aADMM for signal denoising with weakly convex penalty. We focus on solving the total variation regularization problem~\eqref{eq:problem_denoise} with firm thresholding. We consider generated block signals of different sizes $n$, noised with a Gaussian error of standard deviation $\sigma$. For every size, we set the parameters
\begin{equation}\label{numexp_parameters}
\sigma:=0.5,\quad \omega:=2\quad\text{and}\quad \zeta:=4\omega=8. 
\end{equation}
As noted in \Cref{rem:reformulation}, we can apply the classical ADMM to problem~\eqref{eq:problem_denoise} with a weakly convex penalty $P^{(F)}_{\zeta}$ via a  convex reformulation. Thus, we compare the performance between the ADMM and the aADMM. For each $n\in\{1000,2000,\ldots,10000\}$, we generated $10$ noisy signals randomly. Then, for each of these noisy signals, we run the ADMM with 10 random starting points, for each $\gamma\in\{0.2,0.4,\ldots,7.0\}$. At each instance, the aADDM was also launched for the same value of $\gamma$, while the parameter $\delta$ in the $z$-update step was set to
\begin{equation}\label{numexp_delta}
\delta:=\gamma-2\beta=\gamma+\frac{2\omega}{\zeta}=\gamma+\frac{1}{2}
\end{equation}
so that the conditions for convergence in \eqref{eq:ass_param} hold. For both algorithms we employ the primal-dual residuals stopping criteria from \cref{ss:stop} where we set $\epsilon_{\text{abs}}=\epsilon_{\text{rel}}:=10^{-4}$.

The results of the experiments are shown in \Cref{fig:exp2}: We plot the median ratio between the number of iterations required by aADMM and ADMM with respect to $\gamma$, for each size. A ratio less than one indicates that the aADMM converged faster. While, on average, both algorithms behave similarly for large values of $\gamma$, the superiority of aADMM for small values of $\gamma$, where the ratio is always smaller than $1$, is evident in this experiment.

\begin{figure}[ht!]\centering
\vspace{2mm}
\includegraphics[width=0.7\textwidth]{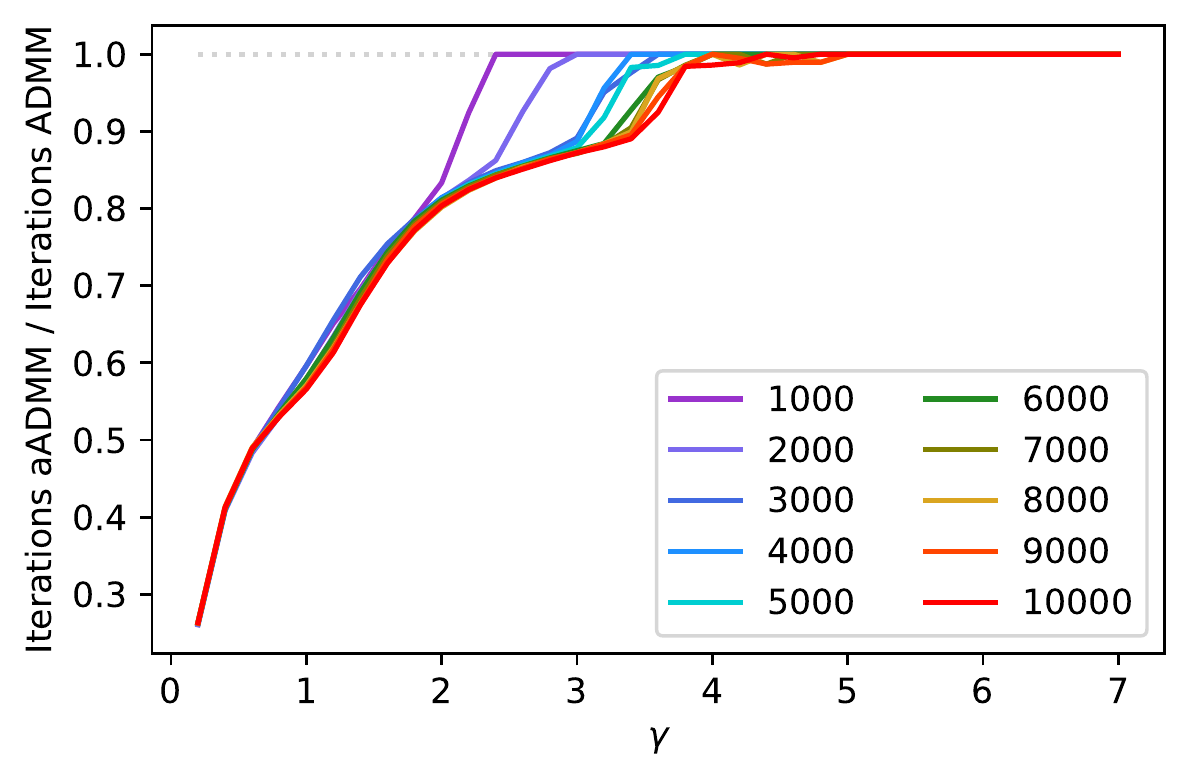}
%\vspace{-1mm}
\caption{Median of 100 ratios between the median (among 50 instances) of iterations required to converge by aADMM and classical ADMM, with respect to the penalty parameter $\gamma$, for denoising signals of sizes $1000,2000,\ldots,10000$ with firm thresholding.}\label{fig:exp2}
\end{figure}

In Figure~\ref{fig:ratio_percent} we plot the percentiles $0,5,\ldots,95$ and $100$ of the ratios between the numbers of iterations with respect to $\gamma$ (for all sizes). This plot exhibits a small variance of the ratio, which demonstrates that the medians in \Cref{fig:exp2} are suitable representatives. Indeed, the curve for 95\%-percentile still lies close to 1, which indicates that the aADMM is at least comparable with (or even better than) the ADMM within 95\% of the times. In fact, the curve for the 70\%-percentile lies entirely below 1. This implies that for all tested values of $\gamma$ and all instances, the aADMM was at least as fast as the ADMM 70\% of the time.

\begin{figure}[ht!]\centering
\includegraphics[width=0.83\textwidth]{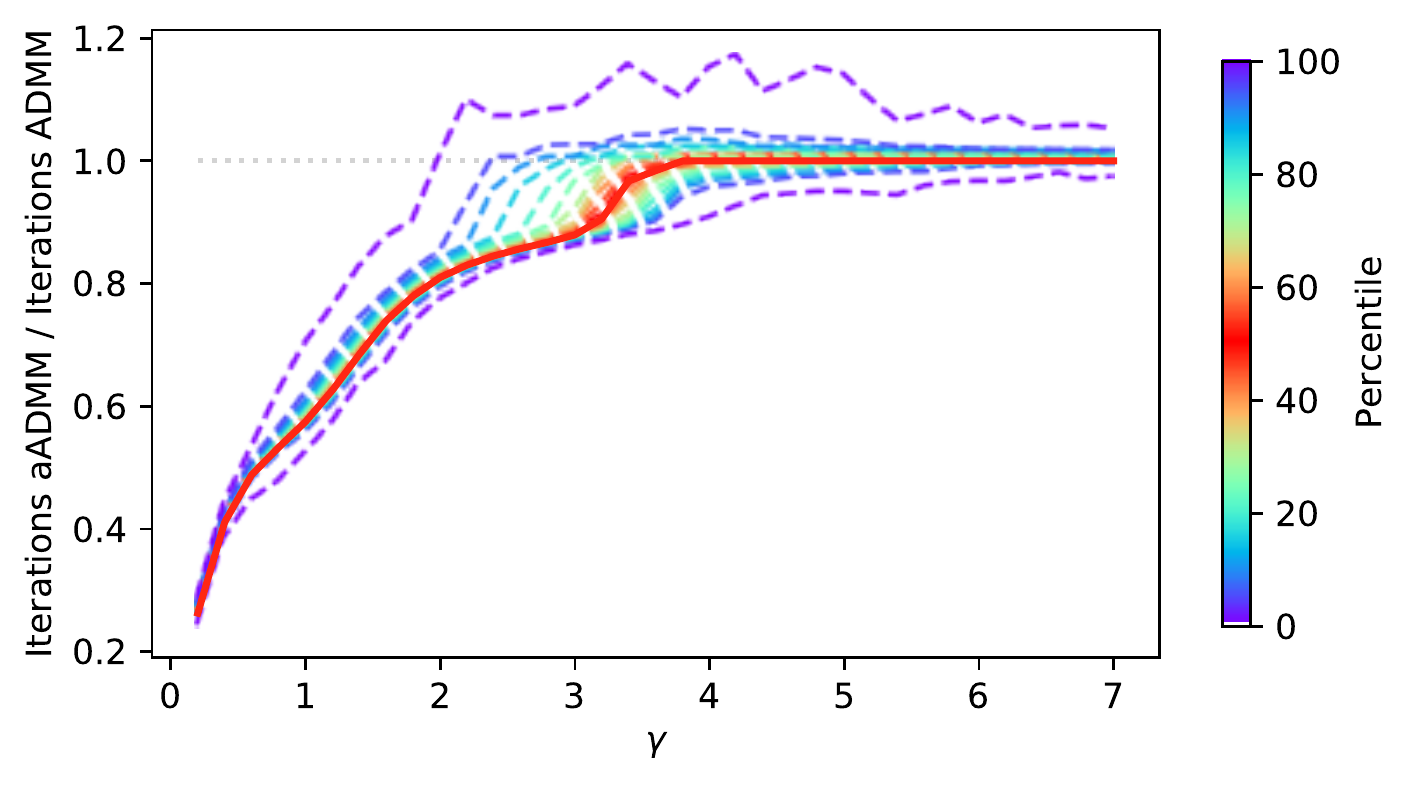}
%\vspace{-1mm}
\caption{Percentile of all 1000 ratios between the numbers of iterations of aADMM and classical ADMM, with respect to the penalty parameter $\gamma$ for denoising signals with firm thresholding.}\label{fig:ratio_percent}
\end{figure}

Finally, in order to better analyze the results, we further examine the outputs for $n=1000$, $n=5000$ and $n=10000$ in \Cref{fig:exp2b}. Instead of the ratios between both algorithms, we plot the median of the number iterations required by each of the algorithms separately. On the one hand, we confirm our previous conclusions: for every fixed value of $\gamma$, the aADMM is as rapid as the classical ADMM (and much faster for small $\gamma$). On the other hand, if one was able to predict the optimal choice for $\gamma$, then both algorithms perform similarly. However, the optimal $\gamma$ is unknown.

\begin{figure}[ht!]\centering
\subfigure[$n=1000$\label{fig:iter1}]{\includegraphics[width=0.325\textwidth]{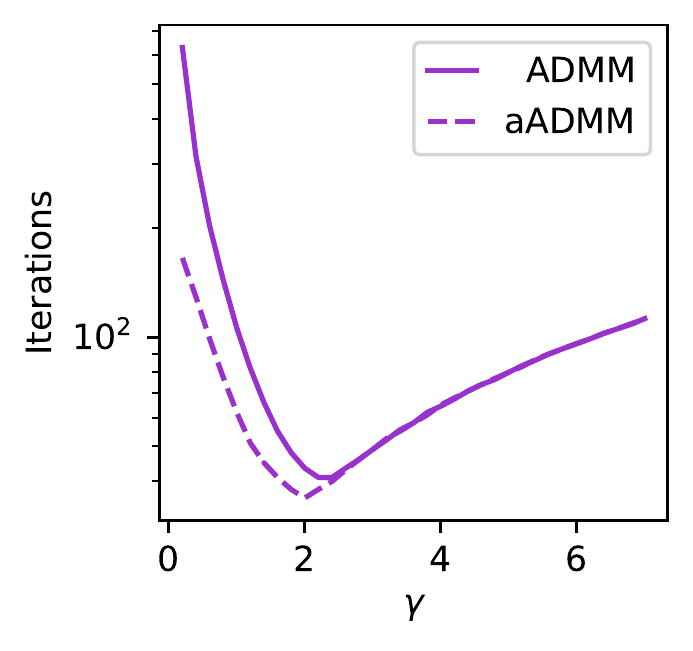}}\hfill
\subfigure[$n=5000$\label{fig:iter4}]{\includegraphics[width=0.325\textwidth]{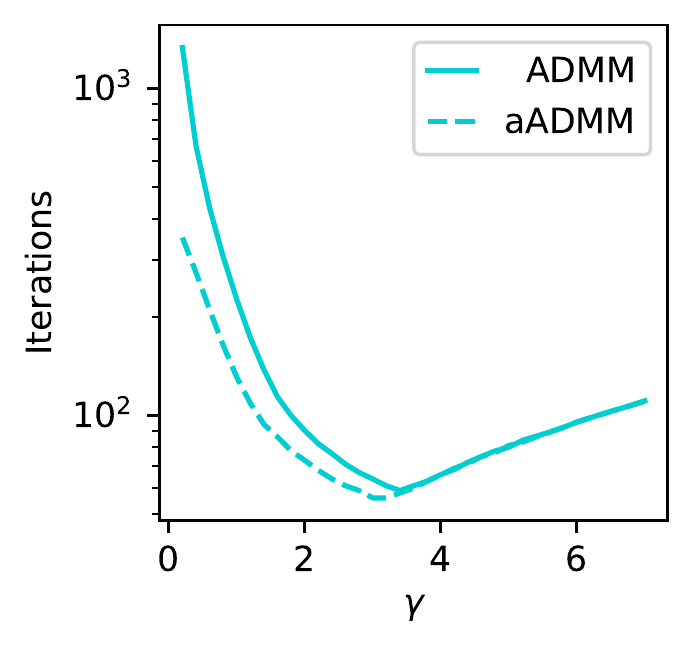}}\hfill
\subfigure[$n=10000$\label{fig:iter9}]{\includegraphics[width=0.325\textwidth]{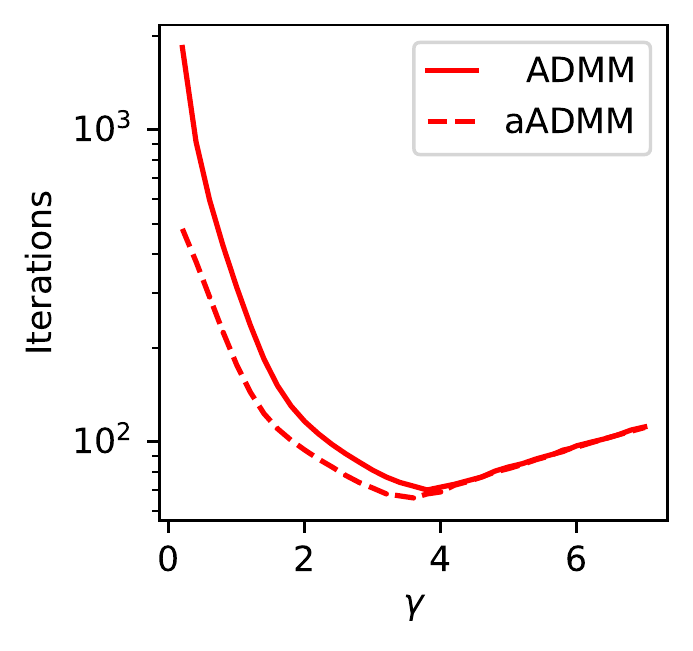}}
\caption{Median (among 100 instances) of iterations required to converge by aADMM and ADMM, with respect to the penalty parameter $\gamma$, for denoising signals of sizes $1000$, $5000$ and $10000$ with firm thresholding.}\label{fig:exp2b}
\end{figure}

\subsection*{Experiment 3: Comparison between ADMM, aADMM and PDHGM}

We conclude our experiments by incorporating the \emph{primal-dual hybrid gradient method}~\cite{MSMC15}, denoted PDHGM in short, into our comparison. For this experiment we used the parameter setting in \eqref{numexp_parameters} but with $\zeta:=9$ so that the restriction in \eqref{eq:ass_strong} holds. This is required for the convergence of PDHGM in the strongly-weakly convex setting.

For a generated noised block signal of length $n=1000$, we run ADMM, aADMM and PDHGM. For ADMM and aADMM, the parameter $\gamma$ was optimally chosen according to the results obtained in \Cref{fig:exp2b}, while $\delta$ in the aADMM was computed as in \eqref{numexp_delta}. The parameters of the PDHGM did not seem to have a big effect on the convergence rate in this experiment.  Nevertheless, we roughly tuned them for best performance. The same experiment was repeated for a larger signal of length $5000$. The results for both signals are shown in \Cref{fig:PDHGM}. We observe superiority of the ADMM/aADMM compared to the PDHGM in these particular instances, with a slight advantage for the aADMM over the classical ADMM, which is in agreement with our previous results.
 
\begin{figure}[ht!]\centering
\subfigure[n=1000]{\includegraphics[width=0.45\textwidth]{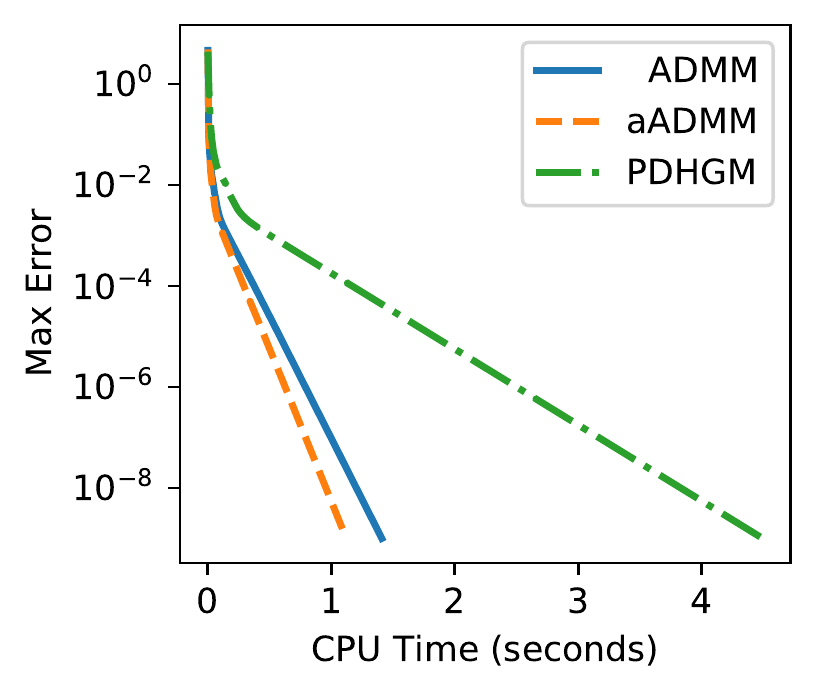}}\hspace{0.05\textwidth}
\subfigure[n=5000]{\includegraphics[width=0.45\textwidth]{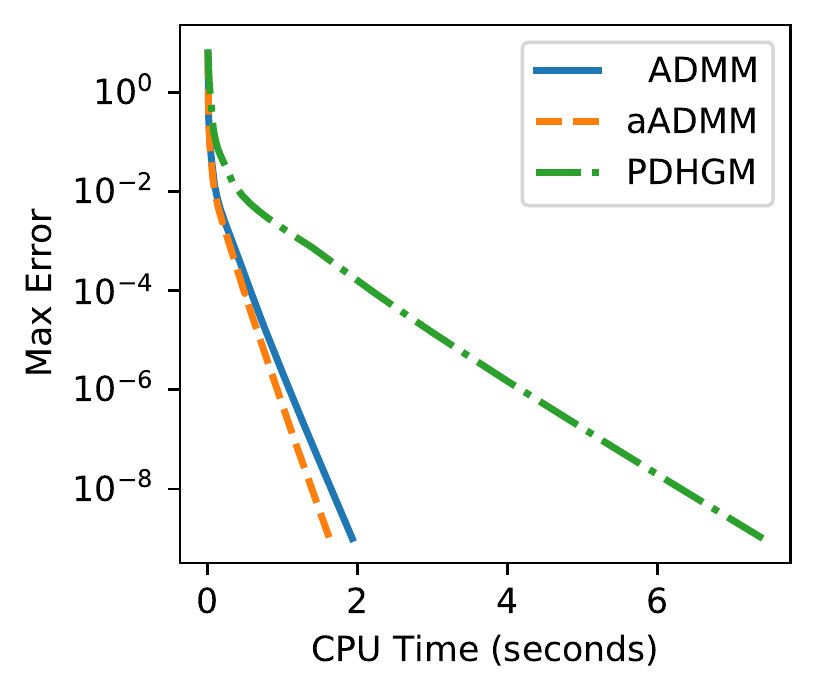}}
\vspace{-0.25cm}
\caption{Comparison between ADMM, aADMM and PDHGM for denoising two signals of sizes $1000$ and $5000$. For each algorithm we plot $\max\{\Vert Mx^{k+1}-z^{k+1}\Vert,\Vert x^{k+1}-x^{k}\Vert, \Vert z^{k+1}-z^{k}\Vert \}$ with respect CPU time in seconds.}\label{fig:PDHGM}
\end{figure}

\section{Conclusions}\label{sec:con}

We provided an adaptive version of the alternating direction method of multipliers for solving linearly constrained optimization problems where the objective is the sum of a weakly convex function and a strongly convex function. Convergence of our scheme was derived as dual to the convergence of the adaptive Douglas--Rachford splitting algorithm. Consequently, the theory regarding the dual relations between the classical ADMM and the classical DR algorithm have been extended to and established in the strongly-weakly convex framework. In the process we have also relaxed stronger assumptions imposed in other studies where the ADMM was applied in this framework  (see~\Cref{rm:wsc1,rm:wsc2}).

The performance of our scheme was tested in numerical experiments of signal denoising. In our experiments we observed that the aADMM outperforms the classical ADMM in terms of the required number of iterations. However, our numerical experiments are  far from providing a complete computational study. A detailed and comprehensive numerical analysis will be the subject matter of future studies.

{\small
\paragraph{\small Acknowledgements}
Sedi Bartz was partially supported by a Simons Foundation Collaboration Grant for Mathematicians, Grant 854168, and by a UMass Lowell faculty startup grant. Rub\'en Campoy was supported, in part, by a postdoctoral fellowship of UMass Lowell, and by the Ministry of Science, Innovation and Universities of Spain
and the European Regional Development Fund (ERDF) of the European Commission, Grant
PGC2018-097960-B-C22 and by the Generalitat Valenciana (AICO/2021/165). Hung M. Phan was partially supported by Autodesk, Inc. via a gift made to the Department of Mathematical Sciences,
UMass Lowell.
Sedi Bartz and Hung M. Phan were partially supported by a Seed Grant from the Kennedy College of Sciences, UMass Lowell.%}

}

\end{document}